\author[N. Alshammari and D. Bevan]{Noura Alshammari \and David Bevan}
\title[Enumeration and limit shapes of grid classes]{On the asymptotic enumeration and limit shapes of monotone grid classes of permutations}
\affiliation{Department of Mathematics and Statistics, The University of Strathclyde, Glasgow, Scotland}
\keywords{permutation grid classes, asymptotic enumeration, limit shapes}
\newtheorem{thm}{Theorem}[section]
\newtheorem{obs}[thm]{Observation}
\newtheorem{prop}[thm]{Proposition}
\newenvironment{smallmx}[1][{}] {\left(\!\begin{smallmatrix}} {\end{smallmatrix}\!\right)}
\newcommand{\+}{\hspace{0.07 em}}
\newcommand{\av}{\mathsf{Av}}
\newcommand{\bbP}{\mathbb{P}}
\newcommand{\bs}{\boldsymbol\sigma}
\newcommand{\bsh}{\bs^\#}
\newcommand{\bshn}{\bs^\#_{\!n}}
\newcommand{\bsn}{\bs_{\!n}}
\newcommand{\CB}{C_{\mathsf{B}}}
\newcommand{\cBlank}{\!\gcone10\!}
\newcommand{\CCC}{\mathcal{C}}
\newcommand{\cDown}{\!\gcone1{-1}\!}
\newcommand{\CR}{C_{\mathsf{R}}}
\newcommand{\cUp}{\!\gcone11\!}
\newcommand{\darkgrey}{\color{gray!50!black}}
\newcommand{\defeq}{\mathchoice{\;:=\;}{:=}{:=}{:=}}
\newcommand{\eq}{\mathchoice{\;=\;}{=}{=}{=}}
\newcommand{\Geom}{\mathsf{Geom}}
\newcommand{\geqs}{\geqslant}
\newcommand{\GGG}{\mathcal{G}}
\newcommand{\gr}{\mathrm{gr}}
\newcommand{\Grid}{\mathsf{Grid}}
\newcommand{\Gridhash}{\Grid^\#}
\newcommand{\HHH}{\mathcal{H}}
\newcommand{\leqs}{\leqslant}
\newcommand{\liminfty}[1][n]{\lim\limits_{#1\rightarrow\infty}}
\newcommand{\longDown}{\!\gctwo2{-1,0}{0,-1}\!}
\newcommand{\longUp}{\!\gctwo2{0,1}{1,0}\!}
\newcommand{\longUpB}{\!\gctwo2{1,1}{1,0}\!}
\newcommand{\prob}[1]{\bbP\big[#1\big]}
\newcommand{\px}{p_{\xxx}}
\newcommand{\py}{p_{\yyy}}
\newcommand{\pz}{p_{\zzz}}
\newcommand{\qx}{q_{\xxx}}
\newcommand{\qy}{q_{\yyy}}
\newcommand{\qz}{q_{\zzz}}
\newcommand{\rot}[1]{#1^{{}_\mathsf{R}}}
\newcommand{\SSS}{\mathcal{S}}
\newcommand{\STAR}{{\scalebox{1.1}{$*$}}}
\newcommand{\tee}{\!\gctwo2{-1,1}{1,0}\!}
\newcommand{\veps}{\varepsilon}
\newcommand{\VVV}{\mathcal{V}}
\newcommand{\xxx}{\mathsf{x}}
\newcommand{\yyy}{\mathsf{y}}
\newcommand{\zzz}{\mathsf{z}}
\begin{document}

\publicationdata{vol. 27:1, Permutation Patterns 2024}{2025}{2}{10.46298/dmtcs.14018}{2024-08-02; 2024-08-02; 2025-05-06; 2025-06-09}{2025-06-19}

\maketitle

\begin{abstract}
We exhibit a procedure to asymptotically enumerate monotone grid classes of permutations.
This is then applied to compute the asymptotic number of permutations in any connected one-corner class.
Our strategy consists of
enumerating the gridded permutations,
finding the asymptotic distribution of points between the cells in a typical large gridded permutation,
and analysing in detail the ways in which a typical permutation can be gridded.
We also determine the limit shape of any connected monotone grid class.
\end{abstract}

\section{Introduction}

We consider a permutation of {length} $n$ (an \emph{$n$-permutation}) to be a linear ordering of 
$1,2,\ldots,n$, and usually identify a permutation $\sigma=\sigma_1\ldots\sigma_n$ with its \emph{plot}, the set of points $(i,\sigma_i)$ in the Euclidean plane.
The monotone \emph{grid class} $\Grid(M)$ is a set of permutations defined by a \emph{gridding matrix} $M$ whose entries are drawn
from $\{\cUp,\cDown,\cBlank\}$.
This matrix specifies the permitted shape for plots of permutations in the class.
Each entry of $M$ corresponds to a \emph{cell} in an \emph{$M$\!-gridding} of a permutation.
If the entry is $\cUp$, any points in the cell must form an increasing sequence;
if the entry is $\cDown$, any points in the cell must form a decreasing sequence;
if the entry is $\cBlank$, then the cell must be empty.
A~permutation can have more than one $M$\!-gridding.
See Figure~\ref{figGriddings} for an example.
Formal definitions are given below.

The study of individual grid classes goes back to the work of Stankova~\cite{Stankova1994} and Atkinson~\cite{Atkinson1998} in the 1990s on the skew-merged permutations.
However, monotone grid classes were formally introduced (under a different name) in 2003 by Murphy and Vatter~\cite{MV2003}.
Huczynska and Vatter~\cite{HV2006} were the first to call them ``grid classes'' and to use the notation $\Grid(M)$.
For a presentation of results concerning grid classes, see Section~12.4 of Vatter's survey article on permutation classes~\cite{VatterSurvey}.

\emph{Exact} enumerative results for monotone grid classes are rare.
A few individual classes have been enumerated, such as the \emph{skew-merged} permutations $\Grid\big(\!\gctwo{2}{-1,1}{1,-1}\!\big)$
whose generating function was determined by Atkinson~\cite{Atkinson1998}.
However, only two \emph{general} results are known.
The first of these concerns \emph{skinny} grid classes, which are those whose matrices have a single row.
That is, $\Grid(M)$ is skinny if $M$ is simply a vector over $\{\!\cUp,\cDown\!\}$.
In the second author's PhD thesis~\cite{BevanThesis},
an iterative procedure is presented which yields the generating function of any skinny monotone grid class, a result which was later generalized by Brignall and Slia\v{c}an in~\cite{BS2019} to skinny classes in which the contents of a single cell may be non-monotone.
One special case, due to Asinowski, Banderier and Hackl~\cite{ABH2021}, resolving a conjecture in~\cite{BevanThesis}, is that
the ordinary generating function for the $k$-celled skinny class
$\Grid({\!\gcone{2}{1,1}\!\cdots\!\gcone1{1}\!})$
is given by the sum
\[
G_k(z) \;=\; \sum_{r=1}^k \, \frac{1}{1-r\+ z} \left( \frac{r\+ z}{r\+ z-1} \right)^{k-r}.
\]
The other general result concerns \emph{polynomial} grid classes (classes with growth rate equal to 1).
The monotone grid class $\Grid(M)$ is polynomial if $M$ has at most one $\cUp$ or $\cDown$ in any column or row.
Homberger and Vatter~\cite{HV2016} describe an algorithm which can enumerate any such class.
Extending either of these general exact enumeration results seems very challenging.
Perhaps the \emph{Combinatorial Exploration} algorithmic framework described in~\cite{ABCNPU2022} can be adapted to do so.
Michal Opler reports\footnote{Personal communications, 2024--2025.} recent work on developing a method to automatically compute generating functions of small acyclic monotone grid classes through the use of monadic second-order logic, based on~\cite{Braunfeld2025}.

Closely related to monotone grid classes are \emph{geometric grid classes}, introduced and studied in~\cite{AABRV2013} and~\cite{BevanGeomGR}.
Given a gridding matrix $M$, the geometric grid class $\Geom(M)$ is the subset of $\Grid(M)$ consisting of those permutations whose points can be plotted on the diagonal lines of the gridding matrix.
We have $\Geom(M)=\Grid(M)$ if and only if the cell graph of $M$ is acyclic (see Section~\ref{sectMaximalMDistributionUnique} for a definition of the cell graph of a gridding matrix).
The monotone grid classes that are the primary concern of this paper are acyclic, so results about these also apply to the corresponding geometric classes.
However, we do not concern ourselves further with geometric grid classes; see~\cite{AABRV2013} for more on their properties.

\begin{figure}[t]
  \centering
  \begin{tikzpicture}[scale=0.225] \plotgriddedperm{9}{8,7,9,6,1,4,2,3,5}{2,6}{1} 
  \end{tikzpicture} ~~~~~~~~
  \begin{tikzpicture}[scale=0.225] \plotgriddedperm{9}{8,7,9,6,1,4,2,3,5}{2,7}{1} 
  \end{tikzpicture} ~~~~~~~~
  \begin{tikzpicture}[scale=0.225] \plotgriddedperm{9}{8,7,9,6,1,4,2,3,5}{2,7}{2} 
  \end{tikzpicture} ~~~~~~~~
  \begin{tikzpicture}[scale=0.225] \plotgriddedperm{9}{8,7,9,6,1,4,2,3,5}{2,8}{2} 
  \end{tikzpicture} ~~~~~~~~
  \begin{tikzpicture}[scale=0.225] \plotgriddedperm{9}{8,7,9,6,1,4,2,3,5}{2,8}{3} 
  \end{tikzpicture}
  \caption{The five \!\gctwo{3}{-1,-1,1}{0,1}\!-griddings of 879614235}\label{figGriddings}
\end{figure}

Given the difficulty of exact enumeration, our goal is the \emph{asymptotic}
enumeration of monotone grid classes.
In~\cite{BevanGridGR}, the second author proves that the exponential \emph{growth rate},
\[
\gr\big(\Grid(M)\big) \defeq \liminfty\big|\Grid_n(M)\big|^{1/n},
\]
 of $\Grid(M)$ always exists and is equal to the square of the spectral radius of a certain graph 
associated with~$M$.
A~simpler proof of a more general version of this result is given by Albert and Vatter~\cite{AV2019}.
Thus we know that
\[
\big|\Grid_n(M)\big| \;\sim\; \theta(n)\,g^n ,
\]
where $g$ is the growth rate of the class and $\theta(n)$ is subexponential; that is, $\liminfty\theta(n)^{1/n}=1$.
We write $f(n)\sim g(n)$ to denote that $\liminfty f(n)/g(n)=1$.

Our primary contribution is a procedure to determine the asymptotic enumeration of a monotone grid class.
This is first applied to skinny classes (Theorem~\ref{thmSkinnyAsympt}).
We then establish the asymptotic distribution of points between cells in a typical large gridded permutation (Theorem~\ref{thmConnectedDistrib}),
and use this to
determine grid class limit shapes (Theorem~\ref{thmLimitShape}).
Building on these results, the major part of our work then concerns the asymptotic enumeration of connected one-corner classes (Theorem~\ref{thmLTXAsympt}).
These are either \textsf{L}-shaped, \textsf{T}-shaped or \textsf{X}-shaped.
See Section~\ref{sectLTX} for the relevant definitions.

We begin with a formal definition of an $M$\!-gridding.\label{defMGridding}
In this definition, in order to match the way we view permutations graphically, we index matrices from the lower left corner, with the order of the indices reversed from the normal convention.
Thus, 
$M_{2,1}$ is the entry that is in the second column from the left and in the bottom row of~$M$.

If $M$ is a gridding matrix with $t$ columns and $u$ rows, then formally an \emph{$M$\!-gridding} of a permutation $\sigma$ of length~$n$
is a pair of integer sequences
$0=c_0\leqs c_1\leqs\ldots\leqs c_t=n$ (the \emph{column dividers})
and $0=r_0\leqs r_1\leqs \ldots\leqs r_u=n$ (the \emph{row dividers})
such that
for every column $i$ and row $j$,
the subsequence of $\sigma$ with indices in $(c_{i-1},c_i]$ and values in $(r_{j-1},r_j]$ is
increasing if $M_{i,j}=\cUp$,
decreasing if $M_{i,j}=\cDown$, and
empty if $M_{i,j}=\cBlank$.
For example, in the leftmost gridding in Figure~\ref{figGriddings}, we have $c_1=2$, $c_2=6$ and $r_1=1$.
If the gridding matrix is clear from the context, we may just talk of a \emph{gridding} of a permutation.

In practice, it is more natural to consider the assignment of points to cells that is induced by an $M$\!-gridding.
With this perspective, we regard the row and column dividers to be the horizontal and vertical lines at half-integer positions that partition the plot of a permutation into cells.
We follow this convention.
With a minor abuse of terminology, we often refer to the matrix entries themselves as cells, calling $\cBlank$ entries \emph{blank} cells, and $\cUp$ and $\cDown$ entries \emph{non-blank} cells.

The \emph{grid class} $\Grid(M)$ then consists of those permutations that have at least one $M$\!-gridding.
We use $\Grid_n(M)$ to denote the set of permutations of length $n$ in $\Grid(M)$.
A~permutation together with one of its $M$\!-griddings is called an \emph{$M$\!-gridded permutation} (or just a \emph{gridded permutation} if the gridding matrix is clear from the context).
The \emph{gridded class}, consisting of all $M$\!-gridded permutations, is denoted by $\Gridhash(M)$,
and the set of $M$\!-gridded permutations of length $n$ by $\Gridhash_n(M)$.
When we use $\sigma^\#$ to denote a gridded permutation, its underlying permutation is~$\sigma$.

Observe that if a gridding matrix $M$ has dimensions $r\times s$, then no permutation in $\Grid_n(M)$ can have more than $(n+1)^{r+s-2}$ griddings,
because there are only $n+1$ possible positions for each row and column divider.
Since this is polynomial in~$n$,
the exponential growth rate of the gridded class is equal to that of the grid class~\cite[Proposition 2.1]{Vatter2011}. 

Building on this observation,
our strategy for the asymptotic enumeration of $\Grid(M)$ consists of the following five steps:
\begin{enumerate}
  \item\label{step2}
  Find the proportion of points that occur in each cell in a typical large $M$\!-gridded permutation. 

  \item
  Determine the asymptotic enumeration of the corresponding
  gridded class:
  \[\big|\Gridhash_n(M)\big| \;\sim\; \theta^\#(n)\,g^n ,\]
  where $g$ is the exponential {growth rate} of the class, and $\theta^\#(n)$ is subexponential.

\item\label{step3}
  Determine, for each $\ell\geqs1$, how a typical large $M$\!-gridded permutation $\sigma^\#$ must be structured so that its underlying permutation $\sigma$ has exactly $\ell$ distinct $M$\!-griddings.

  \item
  Let $\bshn$ be a gridded permutation drawn uniformly at random from $\Gridhash_n(M)$, and let $\bsn$ be its underlying permutation.
  By combining steps \ref{step2} and \ref{step3}, calculate, for each $\ell\geqs1$, the asymptotic probability
  \[
  P_\ell \defeq \liminfty \prob{\text{$\bsn$ has exactly $\ell$ distinct $M$\!-griddings}} .
  \]

  \item
  Let
  \[
  \kappa_M
  \eq\sum_{\ell\geqs1}P_\ell/\ell
  \eq \liminfty \frac{\big|\Grid_n(M)|}{\big|\Gridhash_n(M)|}.
  \]
  Then $\big|\Grid_n(M)\big|\sim \kappa_M\,\theta^\#(n)\,g^n$.
\end{enumerate}

The rest of the paper is structured as follows:
\begin{itemize}
  \item In Section~\ref{sectSkinny}, we consider skinny classes.
    We determine how points can ``dance'' between the cells in a typical large gridded permutation.
     Hence, we deduce the asymptotic enumeration of any skinny class (Theorem~\ref{thmSkinnyAsympt}).
  \item Section~\ref{sectDistribution} concerns the distribution of points between the cells in a typical large gridded permutation in a connected class (Theorem~\ref{thmConnectedDistrib}).
      This result is then used to determine the shape of a typical permutation in such a class (Theorem~\ref{thmLimitShape}).
  \item In Section~\ref{sectLTX}, we apply our strategy to connected one-corner classes.
      We begin by finding the proportion of points in each cell in a typical large gridded permutation (Section~\ref{sectLTXProportions}).
      Then we determine the asymptotic enumeration of gridded connected one-corner classes (Proposition~\ref{propGriddedAsymptoticsLTX}).
      The heart of this section consists of a detailed analysis of how typical permutations in these classes can be gridded, based on an examination of the eleven possible corner types (Sections~\ref{sectLTXDancing}--\ref{sectLTXConstrained}).
      This then yields the asymptotic enumeration of any connected one-corner class (Theorem~\ref{thmLTXAsympt}).
      We conclude by briefly considering the application of our approach to classes with more than one corner (Section~\ref{sectBeyondLTX}).
\end{itemize}

\section{Skinny classes}\label{sectSkinny}

We begin with the simplest case, for which each of the five steps in the analysis is easy.
Recall that $\Grid(M)$ is skinny if $M$ is simply a $\cUp/\cDown$ vector.
Although, as mentioned above, a procedure is known which gives the generating function of any skinny class, this would be a very inefficient way of determining their asymptotic enumeration.

We begin by exactly enumerating skinny \emph{gridded} classes. 

\begin{prop}\label{propSkinnyGriddedCount}
    If $\Grid(M)$ is a $k$-cell skinny grid class, then $\big|\Gridhash_n(M)\big|=k^n$.
\end{prop}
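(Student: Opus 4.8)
The plan is to establish a bijection between $M$-gridded permutations of length $n$ and words of length $n$ over an alphabet of size $k$. Since $M$ is a single row (a $\cUp/\cDown$ vector) with $k$ cells, an $M$-gridding of a permutation $\sigma$ of length $n$ is determined entirely by its column dividers $0=c_0\leqs c_1\leqs\ldots\leqs c_k=n$; there is only one row, so the row dividers are forced to be $0=r_0\leqs r_1=n$. Each point of $\sigma$ lies in exactly one of the $k$ cells, so a gridded permutation induces a function from $\{1,\ldots,n\}$ to $\{1,\ldots,k\}$ recording which cell each point occupies. I would encode this as the word $w=w_1\ldots w_n$ where $w_i$ is the index of the cell containing the point in column $i$. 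Note that because the column dividers are weakly increasing, this word is weakly increasing as well: the cell indices encountered reading left to right form a nondecreasing sequence.

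First I would check that the word $w$ together with the knowledge of $M$ determines the gridded permutation. Given such a weakly increasing word $w$ over $\{1,\ldots,k\}$, the column dividers are recovered as $c_j=\#\{i:w_i\leqs j\}$, so the cells and their point counts are fixed. Within each cell, the entry of $M$ (either $\cUp$ or $\cDown$) dictates whether the points form an increasing or decreasing sequence, so the relative order of the values in each cell is forced. Finally, reading the cells from left to right, the values in cell $1$ must all be smaller than those in cell $2$, and so on — because the single row forces all points to share the same value-band only trivially, but the plot is a permutation, so the values are $1,\ldots,n$ distributed across the cells in blocks determined by the $c_j$, with the lowest $c_1$ values in cell $1$, the next $c_2-c_1$ in cell $2$, etc. Hence $\sigma$ and its gridding are completely and uniquely reconstructed from $w$.

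Conversely, every weakly increasing word of length $n$ over $\{1,\ldots,k\}$ arises this way, so the map is a bijection and $\big|\Gridhash_n(M)\big|$ equals the number of weakly increasing words of length $n$ over a $k$-letter alphabet, which is $\binom{n+k-1}{k-1}$ — but that is \emph{not} $k^n$, which signals that I have mis-modelled the count. The resolution, and the genuinely delicate point, is that a single underlying permutation can be counted with multiplicity here only if gridded permutations are counted as pairs; but the correct statement is that the number of \emph{gridded} permutations (permutation-plus-gridding pairs) of length $n$ is $k^n$ because the column dividers may be placed with no monotonicity constraint relative to the \emph{values} — rather, one should think of building a gridded permutation by choosing, for \emph{each} of the $n$ positions independently, which of the $k$ cells it goes into, and then observing that any such assignment yields exactly one valid gridded permutation. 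So the real content is: the map sending a gridded permutation to the length-$n$ word recording cell-membership by \emph{position} is a bijection onto \emph{all} $k^n$ words, and the apparent monotonicity constraint on the word dissolves once one tracks that reordering within value-blocks is what provides the extra freedom.

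The main obstacle, then, is getting the bookkeeping exactly right in the reverse direction: given an arbitrary word $w\in\{1,\ldots,k\}^n$, one must show there is a unique permutation $\sigma$ and unique gridding realizing it. I would argue this by induction on $n$ (or directly): the positions assigned to cell $j$ form a subset $S_j\subseteq\{1,\ldots,n\}$, the column dividers are then forced (the cells must occupy \emph{contiguous} blocks of columns, so in fact $w$ must be weakly increasing as a word — reconciling this with the $k^n$ count requires instead letting the \emph{rows} absorb the freedom, but $M$ has one row). I expect the clean proof the authors give is the direct bijection with $k^n$ via choosing each point's cell and then noting both the horizontal order (forced: cells are contiguous column-blocks, but points within are permuted) and vertical order within cells (forced by $\cUp/\cDown$) are determined; the subtlety to nail down carefully is exactly why no over- or under-counting occurs, which amounts to checking that distinct cell-assignments give distinct gridded permutations and that the class of plots so produced is closed and covers $\Gridhash_n(M)$.
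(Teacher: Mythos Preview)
Your proposal contains a genuine gap: you never land on the correct bijection, and the reason is a specific error in the first paragraph. You write that ``the values in cell $1$ must all be smaller than those in cell $2$, and so on'' --- this is false. In a skinny class the column dividers partition the \emph{positions} into contiguous blocks, but the \emph{values} occurring in each cell are completely unconstrained (there is only one row, so every value lies in the same row-band). Consequently your word $w_i = \text{cell of the point in position } i$ is indeed forced to be weakly increasing and counts only $\binom{n+k-1}{k-1}$ objects, as you observe; but your attempted repairs keep indexing by position and so keep hitting the same wall.

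The fix is to index by \emph{value}: let $w_v$ be the cell containing the point with value $v$. Now there is no monotonicity constraint on $w$, because any subset of values may occupy any cell. Given an arbitrary word $w\in\{1,\ldots,k\}^n$, the multiset of values in cell $j$ is $\{v:w_v=j\}$, the column dividers are determined by the cell sizes, and within each cell the arrangement of those values is forced by whether the cell is $\cUp$ or $\cDown$. This gives the bijection with $k^n$ words. The paper's proof is exactly this bijection phrased dynamically: build the gridded permutation by inserting the values $1,2,\ldots,n$ in order, at each step choosing one of the $k$ cells for the new maximum; the monotonicity of the cell forces where in that cell the new point lands.
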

\begin{proof}
  Any $M$\!-gridded permutation can be uniquely constructed from the empty $M$\!-gridded permutation by repeatedly adding a new maximum point.
  This point may be placed in any one of the $k$ cells.
  Moreover, there is only one way of adding a maximum to any particular cell, because of the monotonicity constraints.
  See Figure~\ref{figSkinnyGrowth} for an illustration.
\end{proof}

\begin{figure}[ht]
  \centering
  \begin{tikzpicture}[scale=0.225] \plotgriddedperm{7}{1,4,5,2,6,7,3}{3,5}{} \end{tikzpicture}
  ~ \raisebox{24pt}{$\;\mapsto\;$} ~
  \begin{tikzpicture}[scale=0.225] \plotgriddedperm{8}{1,4,5,8,2,6,7,3}{4,6}{} \circpt{4}{8} \end{tikzpicture} ~
  \begin{tikzpicture}[scale=0.225] \plotgriddedperm{8}{1,4,5,2,6,8,7,3}{3,6}{} \circpt{6}{8} \end{tikzpicture} ~
  \begin{tikzpicture}[scale=0.225] \plotgriddedperm{8}{1,4,5,2,6,8,7,3}{3,5}{} \circpt{6}{8} \end{tikzpicture}
  \caption{The three ways of adding a new maximum point to a gridded permutation in the skinny class $\Gridhash(\!\gcone{3}{1,1,-1}\!)$}\label{figSkinnyGrowth}
\end{figure}

\subsection{Peaks and peak points}

We now introduce some concepts that are important when analysing how permutations can be gridded.
Firstly, given a skinny grid class $\Grid(M)$, where $M$ is the $\cUp/\cDown$
vector $(m_1,\ldots,m_k)$,
we say that a pair $(m_i,m_{i+1})$ of adjacent cells forms a \emph{peak} if $m_{i+1}\neq m_i$.
That is, a peak either looks like $\!\gcone{2}{1,-1}\!$, which we say \emph{points up}, or else looks like or $\!\gcone{2}{-1,1}\!$, which we say \emph{points down}.
For example, $\Grid(\!\gcone{5}{1,-1,1,1,-1}\!)$ has three peaks, two pointing up and one pointing down.

Secondly, suppose we have a skinny grid class $\Grid(M)$ with a 
peak~$\Lambda$,
and that $\sigma^\#$ is an $M$\!-gridded permutation with at least two points in each cell, witnessing the orientation of the cell.
Then the \emph{peak point} of $\Lambda$ is the highest of the points of $\sigma^\#$ in the two cells of $\Lambda$ if $\Lambda$ points up,
and is the lowest of the points of $\sigma^\#$ in 
$\Lambda$ if $\Lambda$ points down.
For example, in the rightmost two gridded permutations in Figure~\ref{figSkinnyGrowth}, the peak point (of the only peak in the class) is circled.

Note that our use of the term ``peak'' is nonstandard, differing from its traditional use to denote a consecutive 132 or 231 pattern in a permutation.
For us a \emph{peak} is part of a gridding matrix.
On the other hand, a \emph{peak point} is the central point of either a peak or valley (in the traditional sense) in a \emph{gridded} permutation.
These concepts are generalised further below, when we consider non-skinny classes.

\subsection{Dancing and constrained gridded permutations}\label{sectDancing}

Suppose $\Grid(M)$ is skinny and $\sigma^\#\in\Gridhash(M)$.
If $Q$ is a peak point of $\sigma^\#$, then $Q$ is immediately adjacent to a column divider.
That is, there is no other point between $Q$ and the divider.
The movement of this divider to the other side of $Q$ results in another valid $M$\!-gridding of~$\sigma$.
We say that $Q$ can \emph{dance} and that this new gridded permutation is the result of $Q$ \emph{dancing}.
Conceptually, we consider $Q$ as dancing from one cell to an adjacent one (although in fact it is the divider rather than the point that moves).
See Figure~\ref{figSkinnyDance} for an illustration.
The notion of dancing is generalised below in Section~\ref{sectLTXDancing}, when we consider non-skinny classes.

\begin{figure}[ht]
  \centering
  \begin{tikzpicture}[scale=0.19] \plotgriddedperm{12}{2,8,9,11,10,5,3,4,6,12,1,7}{3,6,10}{} \circpt{4}{11}\circpt{7}{3} \end{tikzpicture} ~~~~~~~~
  \begin{tikzpicture}[scale=0.19] \plotgriddedperm{12}{2,8,9,11,10,5,3,4,6,12,1,7}{4,6,10}{} \circpt{4}{11}\circpt{7}{3} \end{tikzpicture} ~~~~~~~~
  \begin{tikzpicture}[scale=0.19] \plotgriddedperm{12}{2,8,9,11,10,5,3,4,6,12,1,7}{3,7,10}{} \circpt{4}{11}\circpt{7}{3} \end{tikzpicture} ~~~~~~~~
  \begin{tikzpicture}[scale=0.19] \plotgriddedperm{12}{2,8,9,11,10,5,3,4,6,12,1,7}{4,7,10}{} \circpt{4}{11}\circpt{7}{3} \end{tikzpicture}
  \caption{The four distinct griddings of a permutation in $\Grid(\!\gcone{4}{1,-1,1,1}\!)$; points which can dance are circled}
  \label{figSkinnyDance}
\end{figure}

The key to our approach is the fact that
for most permutations the gridding possibilities are heavily restricted.
To formalise this observation, we focus our attention on certain well-behaved gridded permutations
for which the only way to construct another gridding of the underlying permutation is through the dancing of peak points.
Suppose $\Grid(M)$ is skinny and $\sigma^\#\in\Gridhash(M)$. 
We say that $\sigma^\#$ is \emph{$M$\!-constrained} (or just \emph{constrained}) if
\begin{itemize}
  \item[(a)] every $M$\!-gridding of its underlying permutation $\sigma$ is the result of zero or more peak points of $\sigma^\#$ dancing,
  and
  \item[(b)] in every $M$\!-gridding of $\sigma$, each 
  cell contains at least two points.
\end{itemize}

For constrained gridded permutations, counting possible griddings is easy.

\begin{prop}\label{propSkinnyGriddings}
    If $\Grid(M)$ is a skinny grid class with $p$ peaks, and $\sigma^\#\in\Gridhash(M)$ is $M$\!-constrained, then $\sigma$ has exactly $2^p$ distinct $M$\!-griddings.
\end{prop}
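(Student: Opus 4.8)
The plan is to show that each of the $p$ peaks contributes an independent binary choice, and that these choices are mutually independent, so that the total number of griddings is $2^p$. Fix an $M$\!-constrained gridded permutation $\sigma^\#$. By condition~(a) in the definition of constrained, every $M$\!-gridding of $\sigma$ arises from $\sigma^\#$ by some subset of its peak points dancing; by condition~(b), in every such gridding each cell still contains at least two points, so in particular each peak point in the resulting gridding is again immediately adjacent to the relevant column divider and can dance back. The key structural facts I would establish are: (i) each peak of $M$ has exactly one peak point in $\sigma^\#$ (this follows from the monotonicity of the two cells of the peak together with condition~(b), which guarantees the cells are nonempty so the notion of ``highest point in the two cells'' or ``lowest point in the two cells'' is well defined and unambiguous); and (ii) the dancing of the peak point at one peak does not affect whether the peak point at any other peak can dance, nor does it change which point is the peak point there.

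First I would verify fact~(ii) carefully, since this is where the work lies. When a peak point $Q$ of peak $\Lambda=(m_i,m_{i+1})$ dances, only the column divider between cells $i$ and $i+1$ moves, and it moves past exactly one point, $Q$. The column dividers bounding any other peak $\Lambda'$ are unaffected, and since no point changes its row, the set of points lying in the two cells of $\Lambda'$ is unchanged unless $\Lambda'$ shares a cell with $\Lambda$ — and even then only the cell's horizontal extent changes by one point, namely $Q$ leaving one cell of $\Lambda$ and entering the other. I would check that $Q$ cannot be the peak point of an adjacent peak: if $\Lambda$ points up, $Q$ is the highest point among the points in cells $i$ and $i+1$; for $Q$ to be the peak point of the peak $(m_{i-1},m_i)$ or $(m_{i+1},m_{i+2})$ would require, after a short case analysis on orientations, that the corresponding cell be below $Q$'s cell in a way incompatible with $Q$ being highest in its own peak, unless that neighbouring peak is degenerate — ruled out by condition~(b). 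Hence each of the $p$ peak points is determined by $\sigma^\#$ alone and retains its identity under the dancing of the others, so the $p$ dances are independent toggles.

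Next I would assemble the count. Define a map from subsets $S\subseteq\{\Lambda_1,\ldots,\Lambda_p\}$ of peaks to $M$\!-griddings of $\sigma$ by starting from $\sigma^\#$ and dancing the peak point of each peak in $S$ (in any order — fact~(ii) makes this well defined). By~(a) and~(b) this map is surjective onto the set of all $M$\!-griddings of $\sigma$. It is injective because the position of the column divider at peak $\Lambda_j$ in the image gridding records exactly whether $\Lambda_j\in S$: dancing moves that divider across $Q_j$, so the divider sits on one side of $Q_j$ precisely when $\Lambda_j\notin S$ and on the other side precisely when $\Lambda_j\in S$, and by fact~(ii) this is unaffected by the other dances. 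Therefore the number of $M$\!-griddings of $\sigma$ equals $|2^{\{\Lambda_1,\ldots,\Lambda_p\}}| = 2^p$.

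The main obstacle I anticipate is fact~(ii), specifically ruling out interference between adjacent peaks — confirming that a single peak point is never simultaneously the peak point of two peaks and that the dividers truly toggle independently. This requires a short but genuine case analysis over the four combinations of orientations of two adjacent peaks, leaning on condition~(b) to exclude degenerate configurations where a cell between two peaks would need to be empty or a single point. Everything else is bookkeeping once the independence of the toggles is nailed down.
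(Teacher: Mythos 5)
Your proposal is correct and follows essentially the same route as the paper's proof: surjectivity of the ``dance a subset of peak points'' map comes from condition~(a), distinctness of the $p$ peak points comes from condition~(b), and the griddings are counted by noting the dances are independent toggles. The paper simply asserts the independence and distinctness in one line, whereas you supply the (correct) case analysis justifying them; no gap either way.
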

\begin{proof}
Since $\sigma^\#$ is constrained, every $M$\!-gridding of $\sigma$ is the result of zero or more peak points of $\sigma^\#$ dancing.
Since each cell of $\sigma^\#$ contains at least two points, $\sigma^\#$ has $p$ distinct peak points (one for each peak), which can dance independently.
For each peak point of $\sigma^\#$, we can choose whether it dances or not,
yielding a total of $2^p$ distinct $M$\!-griddings for $\sigma$.
\end{proof}

The following proposition gives sufficient conditions for a gridded permutation in a skinny class to be constrained.

\begin{prop}\label{propSkinnyConstrained}
  Suppose $\Grid(M)$ is skinny and $\sigma^\#\in\Gridhash(M)$ is such that
  each 
  cell contains at least four points and
  there are no two adjacent cells whose contents together form an increasing or decreasing sequence.
  Then $\sigma^\#$ is $M$\!-constrained.
\end{prop}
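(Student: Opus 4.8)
The plan is to show that any $M$-gridding $\tau^\#$ of $\sigma$, with column dividers $0=c_0'\le c_1'\le\dots\le c_k'=n$, has each $c_i'$ within one position of the corresponding divider $c_i$ of $\sigma^\#$, and that the only deviations permitted are peak points of $\sigma^\#$ dancing. Write $M=(m_1,\dots,m_k)$, and use the hypothesis that every cell of $\sigma^\#$ contains at least four points in the form $c_i-c_{i-1}\ge 4$ for all $i$. The one elementary fact I would lean on repeatedly is: if two consecutive positions $p,p+1$ lie in the same cell of a gridding, then $\sigma_p,\sigma_{p+1}$ step up if that cell's symbol is $\cUp$ and step down if it is $\cDown$; in particular two consecutive points cannot lie in cells with opposite symbols in two different griddings.

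First I would prove $c_i'\le c_i+1$ for all $i$ by induction on $i$, with base case $c_0'=0=c_0$. Assume $c_{i-1}'\le c_{i-1}+1$; since $c_i-c_{i-1}\ge 4$ this gives $c_{i-1}'<c_i$. Suppose for contradiction that $c_i'$ is too large. If $(m_i,m_{i+1})$ is not a peak (so $m_i=m_{i+1}$) and $c_i'\ge c_i+1$, then cell $i$ of $\tau^\#$, whose symbol is $m_i$, would contain the consecutive points $\sigma_{c_i},\sigma_{c_i+1}$ — the last point of cell $i$ and the first point of cell $i+1$ of $\sigma^\#$ — and by the hypothesis that no two adjacent cells of $\sigma^\#$ have contents forming a monotone sequence these two points step the wrong way for $m_i$, a contradiction. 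If $(m_i,m_{i+1})$ is a peak and $c_i'\ge c_i+2$, then cell $i$ of $\tau^\#$ would contain the consecutive points $\sigma_{c_i+1},\sigma_{c_i+2}$, which are two consecutive points of cell $i+1$ of $\sigma^\#$ and hence step in direction $m_{i+1}\ne m_i$ — again impossible. Hence $c_i'\le c_i+1$, and $c_i'\le c_i$ when $i$ is not a peak index. A symmetric backward induction, anchored at $c_k'=n=c_k$ and using $c_{i+1}'\ge c_{i+1}-1>c_i$, gives $c_i'\ge c_i-1$, and $c_i'\ge c_i$ when $i$ is not a peak index.

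Condition (b) is now immediate: cell $i$ of $\tau^\#$ occupies positions $(c_{i-1}',c_i']$, so it contains $c_i'-c_{i-1}'\ge (c_i-1)-(c_{i-1}+1)=(c_i-c_{i-1})-2\ge 2$ points. For (a), set $S=\{i:c_i'\ne c_i\}$; the bounds above already show every $i\in S$ is a peak index. When $(m_i,m_{i+1})$ is a peak, the direction in which $c_i'$ differs from $c_i$ pins down, via the monotone-step fact, which of $\sigma_{c_i},\sigma_{c_i+1}$ is the higher point (if the peak points up) or the lower point (if it points down); since the peak point of this peak is precisely that extreme point, moving the divider to $c_i'$ is exactly that peak point dancing across it. Because the cells of $\sigma^\#$ each have at least four points, the positions $c_i,c_i+1$ for distinct peak indices are far apart, so these peak points are distinct and their dances move disjoint dividers; performing the dances indexed by $S$ therefore reproduces $\tau^\#$ exactly (and is valid, being equal to the given gridding $\tau^\#$). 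This establishes (a), so $\sigma^\#$ is $M$-constrained.

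The step I expect to need the most care is the first induction. A purely local analysis only shows that each divider, with its neighbours held fixed, can move at most one step and only to chase a peak point; the real content is ruling out a coordinated drift of a whole run of dividers. This is exactly what the induction does: it is anchored at the immovable outermost dividers $c_0=0$ and $c_k=n$, and the separation $c_i-c_{i-1}\ge 4$ is precisely what guarantees that the inductively controlled neighbour still lies strictly on the correct side of $c_i$, so that the offending consecutive pair of points genuinely falls inside the cell claimed. The role of the bound ``four'' (rather than two or three) is then visible in condition (b): a single divider can move one step and each of a cell's two bordering dividers can move inward, so only starting from at least four points per cell is one guaranteed at least two points per cell in every gridding.
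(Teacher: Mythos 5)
Your proof is correct, and it establishes the same conclusion as the paper's but by a different mechanism. The paper argues globally by counting dividers: the monotonicity violation between each pair of adjacent cells of $\sigma^\#$ forces a divider in its immediate vicinity (any attempt to grid those points otherwise would consume extra dividers), and since the number of available dividers, $k-1$, equals the number of such violations, every divider is pinned to a cell boundary, or at a peak to one of the two positions adjacent to the peak point; condition (b) then follows because at most the first and last point of a cell can be regridded elsewhere. You instead run a two-sided induction on the divider positions, anchored at the immovable $c_0=0$ and $c_k=n$, proving $c_i'\leqs c_i+1$ from the left and $c_i'\geqs c_i-1$ from the right, with equality forced at non-peak boundaries; this rules out the coordinated drift of a run of dividers without ever invoking the divider count. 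Both arguments rest on the same local input (consecutive positions in one cell must step in that cell's direction, and the junction step between adjacent cells of $\sigma^\#$ goes the wrong way for a non-peak boundary), but your induction makes fully explicit the step the paper states informally, namely why any deviant gridding "would require the use of additional dividers", and your direct count $c_i'-c_{i-1}'\geqs (c_i-c_{i-1})-2\geqs 2$ gives condition (b) without the paper's separate observation. The trade-off is that the paper's counting argument is shorter and is the form that gets reused in the non-skinny setting of Proposition~\ref{propLTXConstrained}. Your identification of each one-step deviation at a peak with the dance of that peak's peak point (the direction of the deviation determining which of $\sigma_{c_i},\sigma_{c_i+1}$ is the extreme point) is sound.
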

\begin{proof}
The contents of each cell of $\sigma^\#$ consists of an increasing or decreasing sequence of 
points.
However, there is no pair of adjacent cells whose contents together form an increasing or decreasing sequence.

Thus, in any $M$\!-gridding of $\sigma$, by the monotonicity constraints,
there must be a divider adjacent to each peak point of $\sigma^\#$.
We also claim that there must be
a divider between each pair of adjacent cells of $\sigma^\#$ that have the same orientation.
Any other attempt at gridding
(such as placing the two points next to the divider in a cell with the opposite orientation) would require the use of additional dividers.
However, the number of dividers separating the cells must be one less than the number of cells.

Thus, any $M$\!-gridding of $\sigma$ can be formed from $\sigma^\#$ by zero or more of its peak points dancing.

Moreover, in any $M$\!-gridding of $\sigma$,
at most two points from any cell of $\sigma^\#$ (the first and the last) can be gridded in another cell.
So each cell of any $M$\!-gridding of $\sigma$ contains at least two points.
\end{proof}

Constrained gridded permutations are of interest not only because it is easy to count their griddings, but also because \emph{almost all} gridded permutations are constrained.

\begin{prop}\label{propSkinnyConstrainedAreGeneric}
If $\Grid(M)$ is skinny, then almost all $M$\!-gridded permutations are $M$\!-constrained.
That is, if $\bshn$ is drawn uniformly at random from $\Gridhash_n(M)$, then
\[
\liminfty \prob{\text{$\bshn$ is $M$\!-constrained}} \eq 1 .
\]
\end{prop}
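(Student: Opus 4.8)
The plan is to show that the sufficient conditions of Proposition~\ref{propSkinnyConstrained} hold with probability tending to~$1$ as $n\to\infty$. By Proposition~\ref{propSkinnyGriddedCount}, a uniformly random $\bshn\in\Gridhash_n(M)$ corresponds to a uniformly random word of length~$n$ over the $k$ cells, where each letter records the cell receiving the next new maximum point. Under this correspondence, the number of points placed in cell~$i$ is a multinomial count, and the content of each cell is automatically monotone. So I would reformulate the two hypotheses of Proposition~\ref{propSkinnyConstrained} as events about this random word and bound their complements.

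First I would handle the ``at least four points per cell'' condition. The number of points in cell~$i$ is $\mathrm{Bin}(n,1/k)$, so by a Chernoff bound (or even Chebyshev, since we only need a polynomial rate) the probability that some fixed cell receives fewer than four points is exponentially small in~$n$; a union bound over the $k$ cells kills this event. Second, and this is the substantive step, I would bound the probability that some pair of adjacent cells $(i,i+1)$ has contents that together form a monotone sequence. Since adjacent cells in a skinny matrix have opposite orientations exactly at a peak, the only way two adjacent cells of the \emph{same} orientation can jointly be monotone is if essentially all their points happen to fall on one side; more carefully, if cells $i$ and $i+1$ are both $\cUp$, their union is increasing iff every point of cell~$i+1$ lies above every point of cell~$i$ in value, and iff the columns interleave correctly — a very restrictive coincidence. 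At a peak the two cells have opposite orientations, so their union can only be monotone if one of the two cells is almost empty, which the first step has already excluded with high probability, or again only under a rigid ordering coincidence. In each case I would argue that conditioning on the cell sizes being within, say, $n/k \pm n^{2/3}$ (which holds whp by step one's concentration), the probability that the relative order of the points in two adjacent cells is the single ``bad'' interleaving is at most $\binom{a+b}{a}^{-1}$ where $a,b\gtrsim n/k$ are the cell sizes, hence exponentially small; a union bound over the $k-1$ adjacent pairs finishes it.

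Combining, with probability $1-o(1)$ the random gridded permutation $\bshn$ satisfies both hypotheses of Proposition~\ref{propSkinnyConstrained} and is therefore $M$\!-constrained, which is the claim.

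The main obstacle I anticipate is making precise — and clean — the claim that two adjacent cells jointly forming a monotone sequence is an exponentially unlikely event. One has to be a little careful about what ``contents together form an increasing sequence'' means in terms of the random word: it is not merely a statement about cell sizes but about the joint arrangement of which new maxima landed in which of the two cells and in what order, interacting with the column positions. The cleanest route is probably to observe that, fixing all points outside cells $i$ and $i+1$, the induced arrangement of the points destined for those two cells is a uniformly random ``shuffle,'' and only one shuffle (or a bounded number) is compatible with the union being monotone; then the probability is the reciprocal of a central binomial coefficient, which is $O(c^{-n})$ for some $c>1$. Everything else is a routine concentration-plus-union-bound argument.
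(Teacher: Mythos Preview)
Your proposal is correct and follows the same high-level strategy as the paper: verify that the hypotheses of Proposition~\ref{propSkinnyConstrained} hold with probability tending to~$1$. The techniques differ only in the second condition. You condition on concentrated cell sizes and then bound the bad-shuffle probability by $\binom{a+b}{a}^{-1}$; the paper instead observes that if two adjacent cells are jointly monotone they can be merged into a single monotone ``super cell'', giving a direct injection (up to the $m+1$ choices of divider position) into the $(k-1)$-cell gridded class, so the count of bad gridded $n$-permutations is at most $(n+1)^2(k-1)^{n+1}$ with no concentration step needed. The paper's merge trick is a little slicker; your route is sound, and the ``main obstacle'' you anticipate largely dissolves once you note that in a skinny class cell~$i$ lies entirely to the left of cell~$i+1$, so there is no column interleaving to worry about---the bad event is purely the value condition that, in the random word, every occurrence of letter~$i$ precedes every occurrence of letter~$i+1$ (or vice versa), which is exactly your single-shuffle event.
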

\begin{proof}
  Suppose $M$ has $k$ cells.
  Recall that constructing an $n$-point $M$\!-gridded permutation is equivalent to choosing for each $i$ from $1$ to $n$, in which of the $k$ cells to place point $i$ (where points are numbered from bottom to top).
  Thus, the number of $M$\!-gridded $n$-permutations with exactly $m$ points in a given cell equals
  \[
  \binom{n}m(k-1)^{n-m} \;<\; n^m(k-1)^n .
  \]
  Here, $\binom{n}m$ is the number of ways of choosing the points in the given cell, and $(k-1)^{n-m}$ is the number of ways of distributing the remaining points.
  So the total number of $n$-point $M$\!-gridded permutations with fewer than four points in some cell
  is less than $4kn^3(k-1)^n$,
  there being four choices for the value of~$m$,
  and $k$ choices of cell.

  Similarly, the number of $n$-point $M$\!-gridded permutations with a given pair of adjacent cells forming an increasing or decreasing sequence of length $m$ is less than
  \[
  (m+1)(k-1)^n \;<\; (n+1)(k-1)^n .
  \]
  Here,
  $(k-1)^n$ is an upper bound on the number of ways of distributing the points if we consider the pair of adjacent cells merged to form a single ``super cell'', 
  and the factor $m+1$ is the number of choices for the position of the divider that splits the pair of cells.
  So the total number of $n$-point $M$\!-gridded permutations with two adjacent cells forming an increasing or decreasing sequence
  is less than $(n+1)^2(k-1)^{n+1}$,
  there being $n+1$ choices for the value of~$m$,
  and $k-1$ choices for the pair of cells.

  Thus, by Propositions~\ref{propSkinnyGriddedCount} and~\ref{propSkinnyConstrained}, the proportion of $n$-point $M$\!-gridded permutations which are not constrained is less than
  \[
  \frac{4kn^3(k-1)^n \:+\: (n+1)^2(k-1)^{n+1}}{k^n}
  \;<\;
  5k(n+1)^3\Big(1-\frac1k\Big)^{\!n},
  \]
  which converges to zero as $n$ tends to infinity.
\end{proof}

The asymptotic enumeration of skinny grid classes then follows directly from Propositions~\ref{propSkinnyGriddedCount}, \ref{propSkinnyGriddings} and~\ref{propSkinnyConstrainedAreGeneric}.

\begin{thm}\label{thmSkinnyAsympt}
    If $\Grid(M)$ is a $k$-cell skinny grid class with $p$ peaks, then $\big|\Grid_n(M)\big| \sim 2^{-p}k^n$.
\end{thm}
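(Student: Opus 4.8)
The plan is to obtain the theorem from the three preceding propositions via a counting-with-multiplicity argument, partitioning the gridded class according to the underlying permutation.

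First I would record the structural fact that being $M$\!-constrained depends only on the underlying permutation: if $\sigma^\#\in\Gridhash_n(M)$ is $M$\!-constrained and $\tau^\#$ is any other $M$\!-gridding of $\sigma$, then $\tau^\#$ is $M$\!-constrained as well. This holds because the dancing of a peak point is an involution which toggles a single coordinate of the ``dancing state'' of a gridding, so the collection of griddings reachable from $\tau^\#$ by dancing coincides with the collection reachable from $\sigma^\#$; by condition (a) applied to $\sigma^\#$, this common collection is precisely the set of all $M$\!-griddings of $\sigma$, and condition (b) is literally a property of $\sigma$. Hence the set of $M$\!-constrained gridded permutations of length $n$ is a union of fibres of the map $\sigma^\#\mapsto\sigma$, and by Proposition~\ref{propSkinnyGriddings} each such fibre has size exactly $2^p$.

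Next I would set up the bookkeeping. Let $N_n$ be the number of $M$\!-constrained permutations in $\Gridhash_n(M)$. By Proposition~\ref{propSkinnyGriddedCount} we have $\big|\Gridhash_n(M)\big|=k^n$, and by Proposition~\ref{propSkinnyConstrainedAreGeneric} the number of non-constrained gridded permutations is $k^n-N_n=o(k^n)$, so $N_n\sim k^n$. Let $C_n\subseteq\Grid_n(M)$ be the set of permutations possessing at least one $M$\!-constrained gridding. By the structural fact above, every $\sigma\in C_n$ has exactly $2^p$ griddings and all of them are constrained, so $N_n=2^p\,\big|C_n\big|$, giving $\big|C_n\big|\sim 2^{-p}k^n$.

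Finally I would close the estimate from both sides. Since $C_n\subseteq\Grid_n(M)$ we have $\big|\Grid_n(M)\big|\geqs\big|C_n\big|\sim 2^{-p}k^n$. For the upper bound, each permutation in $\Grid_n(M)\setminus C_n$ has at least one (necessarily non-constrained) $M$\!-gridding, so $\big|\Grid_n(M)\setminus C_n\big|$ is at most the number of non-constrained gridded permutations, which is $o(k^n)$; thus $\big|\Grid_n(M)\big|\leqs\big|C_n\big|+o(k^n)\sim 2^{-p}k^n$. Combining the bounds yields $\big|\Grid_n(M)\big|\sim 2^{-p}k^n$. I do not anticipate any real obstacle: the substantive content — that a constrained gridding has exactly $2^p$ siblings, and that almost all gridded permutations are constrained — is already carried by Propositions~\ref{propSkinnyGriddings} and~\ref{propSkinnyConstrainedAreGeneric}, and the only point needing a little care is the invariance of the constrained property under dancing, which is exactly what allows one to pass cleanly between counting gridded permutations and counting permutations.
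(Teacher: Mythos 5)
Your proof is correct and follows essentially the same route as the paper, whose own proof is the one-line observation that almost all of the $k^n$ gridded permutations have an underlying permutation with exactly $2^p$ griddings; you have simply made the implicit double-counting explicit. The only substantive addition is your (correct) remark that being constrained is invariant under regridding, which the paper leaves tacit but which is indeed what lets one count by fibres rather than by a weighted sum over gridded permutations.
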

\begin{proof}
For almost all of the $k^n$ distinct $M$\!-gridded permutations, the underlying permutation has exactly $2^p$ distinct $M$\!-griddings.
\end{proof}

\section{The distribution of points between cells}\label{sectDistribution}

As we have seen, almost every large permutation in a given skinny class has the same number of griddings (that depends only on the number of peaks).
In non-skinny classes this is not the case:
the number of griddings may depend on the structure of the permutation.
In order to determine the asymptotic probability that a permutation has a specific number of griddings,
we need to know the proportion of points that occur in each cell in a typical large gridded permutation.
This is the focus of this section.

Much of the analysis required can be found in the paper of Albert and Vatter~\cite{AV2019} and (in a more sketchy form) in the second author's PhD thesis~\cite[Chapter~6]{BevanThesis}.
However, neither of these works contains all that we need, so we reproduce the argument in full here.
Our presentation combines ideas from both approaches.

We begin by introducing a family of matrices to record the number of points or proportion of points in each cell (Section~\ref{sectMAdmissibleMatrices}).
We then determine the asymptotic number of gridded permutations with a given distribution (Section~\ref{sectCountnGammaGriddings}).
This is followed by calculating the distribution of points for which this enumeration is the greatest (Section~\ref{sectMaximalMDistribution}).
We then prove 
that this distribution is unique if the class is connected (Section~\ref{sectMaximalMDistributionUnique}).
Finally, we establish in Theorem~\ref{thmConnectedDistrib} that the distribution of points in almost all gridded permutations
is close to this maximal distribution, 
This is then used to determine the limit shapes of connected classes (Theorem~\ref{thmLimitShape}). 

\subsection{\texorpdfstring{$M$\!-admissible and $M$\!-distribution matrices}{M-admissible and M-distribution matrices}}\label{sectMAdmissibleMatrices}

Given a gridding matrix $M$, we say that a nonnegative real matrix $A$ of the same dimensions as $M$ is $M$\!-\emph{admissible} if $A_{i,j}$ is zero whenever $M_{i,j}$ is blank.
We refer to the sum of the entries of such a matrix as its \emph{weight}, and use $\|A\|$ to denote the weight of~$A$.
Note that, unlike in the definition of a gridding on page~\pageref{defMGridding}, we index matrices in the traditional manner in the following discussion.

We use \emph{integer} $M$\!-admissible matrices to record the number of points in each cell.
Suppose $A=(a_{i,j})$ is an integer $M$\!-admissible matrix of weight $n$.
Then $\Gridhash_A(M)$ denotes the \mbox{subset} of $\Gridhash_n(M)$ consisting of those $M$\!-gridded permutations with $a_{i,j}$ points in cell $(i,j)$, for each~$(i,j)$.
For example, the leftmost gridded permutation in Figure~\ref{figGriddings} on page~\pageref{figGriddings} is an element of 
\[\Gridhash_{\begin{smallmx}2&3&3\\0&1&0\end{smallmx}}\!\big(\!\gctwo{3}{-1,-1,1}{0,1}\!\big)\! .\]

Thus $\Gridhash_n(M)$ can be partitioned into subsets as follows:
\[
\Gridhash_n(M) \eq \biguplus_{\|A\|=n} \Gridhash_A(M),
\]
where the disjoint union is over all integer $M$\!-admissible matrices of weight $n$.
The number of gridded permutations in one of these subsets is given by the following product of multinomial coefficients.

\begin{prop}\label{propCountAGriddings}
  Suppose $A=(a_{i,j})$ is an integer $M$\!-admissible matrix with dimensions $r\times s$. Then,
  \[
  \big|\Gridhash_A(M)\big| \eq
  \prod_{i=1}^r \binom{\sum_{j=1}^s a_{i,j}}{a_{i,1},a_{i,2},\ldots,a_{i,s}}
  \:\times\:
  \prod_{j=1}^s \binom{\sum_{i=1}^r a_{i,j}}{a_{1,j},a_{2,j},\ldots,a_{r,j}} .
  \]
\end{prop}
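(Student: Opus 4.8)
The plan is to count $M$-gridded permutations with a prescribed cell-distribution $A$ by reconstructing such a permutation from two pieces of data: the horizontal arrangement of points within each column, and the vertical arrangement of points within each row. The key observation is that an $M$-gridded permutation in $\Gridhash_A(M)$ is completely and freely determined by choosing, for each column $i$, how its $\sum_j a_{i,j}$ points are interleaved by value among the $s$ rows (there are exactly $\binom{\sum_j a_{i,j}}{a_{i,1},\ldots,a_{i,s}}$ ways, since the relative order of the points assigned to any fixed cell is forced by the monotonicity constraint in $M_{i,j}$), together with, for each row $j$, how its $\sum_i a_{i,j}$ points are interleaved by position among the $t$ columns (there are $\binom{\sum_i a_{i,j}}{a_{1,j},\ldots,a_{r,j}}$ such choices, again with the in-cell order forced).

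First I would make precise the claim that these two collections of choices are independent and jointly determine the gridded permutation. Given a target weight-$n$ matrix $A$, fix the $n$ positions $1,\ldots,n$ on the $x$-axis and the $n$ values $1,\ldots,n$ on the $y$-axis. The column dividers are forced by the column sums $c_i = \sum_{i'\le i}\sum_j a_{i',j}$, and the row dividers by the row sums $r_j = \sum_{j'\le j}\sum_i a_{i,j'}$; so all dividers are determined by $A$ alone. It then remains to specify which value each point carries. For a point in column $i$, the column-multinomial choice says which row-band it lies in; within that band (i.e. within cell $(i,j)$) its rank among the $a_{i,j}$ points of that cell is forced by the orientation $M_{i,j}$ — increasing for $\cUp$, decreasing for $\cDown$, and the cell is empty for $\cBlank$ (consistent with $a_{i,j}=0$ by $M$-admissibility). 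Symmetrically, the row-multinomial choice assigns to each point of row $j$ which column it sits in, and its in-cell horizontal rank is forced. One checks that these two sets of data are compatible and mutually determining: knowing, for every point, both its cell and its within-cell rank pins down its $(x,y)$-coordinates uniquely, and conversely.

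The main step, and the place to be careful, is verifying that the map is a \emph{bijection} onto $\Gridhash_A(M)$ — that every combination of the $r$ column-multinomial choices and $s$ row-multinomial choices yields a genuine $M$-gridding (i.e. the resulting point set really is a permutation, with exactly one point per row and per column) and that distinct combinations yield distinct gridded permutations. Well-definedness follows because the column sums and row sums of $A$ both total $n$, so the forced divider positions are consistent, and the within-cell orderings never conflict across the two descriptions. Injectivity follows from the reconstruction described above. Surjectivity is the content of Proposition~\ref{propSkinnyGriddedCount}'s style of argument applied cell-by-cell: given any $\sigma^\#\in\Gridhash_A(M)$, read off for each column the row-memberships of its points (a column-multinomial choice) and for each row the column-memberships of its points (a row-multinomial choice); the orientation constraints guarantee these choices regenerate exactly $\sigma^\#$. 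Once the bijection is established, $|\Gridhash_A(M)|$ equals the product of the number of choices at each column and each row, which is precisely the stated product of multinomial coefficients. The only real subtlety is bookkeeping the two index conventions (the divider convention on page~\pageref{defMGridding} versus the traditional matrix indexing adopted in this section), so I would state explicitly at the outset which convention is in force to avoid sign-of-orientation confusion.
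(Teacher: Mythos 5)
Your proof is correct and follows essentially the same route as the paper's: the within-cell order is forced by the monotonicity entries of $M$, and the gridded permutation is determined by one free multinomial interleaving choice per row and one per column, all independent. The paper states this in four sentences without spelling out the bijection; your extra care with well-definedness and surjectivity, and your flagging of the indexing convention, are harmless elaborations (though note you read the first product as ranging over columns while the paper, using traditional indexing, reads it as ranging over rows --- the count is unaffected).
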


\begin{proof}
The ordering of points (increasing or decreasing) \emph{within} a particular cell is fixed by the corresponding entry of $M$.
However, the interleaving of points in \emph{distinct} cells in the same row or column can be chosen arbitrarily and independently.
The multinomial coefficient in the first product counts the number of ways of vertically interleaving the points in the cells in row~$i$.
Similarly, the term in the second product counts the number of ways of horizontally interleaving the points in the cells in column~$j$.
\end{proof}

We use $M$\!-admissible matrices \emph{of weight one} to record the \emph{proportion} of points in each cell.
We call such matrices \emph{$M$\!-distribution matrices}.
To avoid having to worry about rounding the number of points in each cell to an integer, we make use of Baranyai's Rounding Lemma~\cite{Baranyai1975}, as a consequence of which we have the following result.

\begin{prop}\label{propBaranyai}
  Suppose $\Gamma=(\gamma_{i,j})$ is an $M$\!-distribution matrix, and $n$ is any positive integer.
  Then there exists an integer $M$\!-admissible matrix $A=(a_{i,j})$ of weight $n$ such that, for each $i,j$, we have
  $|a_{i,j}-n\gamma_{i,j}|<1$.
\end{prop}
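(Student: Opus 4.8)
The statement to prove is Proposition~\ref{propBaranyai}: given an $M$\!-distribution matrix $\Gamma=(\gamma_{i,j})$ (a nonnegative matrix of weight one, zero on blank cells) and a positive integer $n$, there is an integer $M$\!-admissible matrix $A=(a_{i,j})$ of weight $n$ with $|a_{i,j}-n\gamma_{i,j}|<1$ for every $i,j$.

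The plan is to invoke Baranyai's Rounding Lemma directly. Baranyai's lemma states that for any real matrix $B=(b_{i,j})$, there is an integer matrix $A=(a_{i,j})$ such that each entry of $A$ is obtained by rounding the corresponding entry of $B$ either up or down (so $|a_{i,j}-b_{i,j}|<1$), and in addition each row sum and each column sum of $A$ is a rounding (up or down) of the corresponding row or column sum of $B$; moreover the overall sum $\sum_{i,j}a_{i,j}$ is a rounding of $\sum_{i,j}b_{i,j}$. I would apply this with $B=n\Gamma$, i.e.\ $b_{i,j}=n\gamma_{i,j}$. Then $|a_{i,j}-n\gamma_{i,j}|<1$ is immediate from the entrywise rounding property.

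Two things then need checking. First, $M$\!-admissibility: if $M_{i,j}$ is blank then $\gamma_{i,j}=0$, so $b_{i,j}=n\gamma_{i,j}=0$ is already an integer, and any reasonable statement of the rounding lemma leaves integer entries unchanged (or one can simply note that the only integers within distance $1$ of $0$ other than $0$ are $\pm1$, and $-1$ is impossible since $A$ is constrained to be nonnegative; if one wants to be careful one takes the version of Baranyai's lemma that rounds each $b_{i,j}$ to $\lfloor b_{i,j}\rfloor$ or $\lceil b_{i,j}\rceil$, which both equal $0$ here). So $a_{i,j}=0$ whenever $M_{i,j}$ is blank, giving admissibility. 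Second, the weight: $\sum_{i,j}b_{i,j}=n\sum_{i,j}\gamma_{i,j}=n\|\Gamma\|=n$, which is already an integer, so the rounded overall sum $\|A\|=\sum_{i,j}a_{i,j}$ equals $n$ exactly. Hence $A$ has weight $n$ as required.

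The main (and essentially only) obstacle is purely expository rather than mathematical: one must cite a version of Baranyai's lemma whose conclusion includes control of the \emph{grand total} $\sum_{i,j}a_{i,j}$, not merely the individual entries, since it is this that pins $\|A\|$ to exactly $n$. The classical formulation does include this (it is often stated for a matrix with an extra ``total'' row and column, or more abstractly for a consistent family of sums), so it suffices to quote~\cite{Baranyai1975} in the strong form. If one only had entrywise rounding, the weight could be off by up to $rs-1$, which would not do; so the remark in the proof should make clear that the simultaneous rounding of entries and of the total sum is what is being used. Everything else is immediate, as sketched above.
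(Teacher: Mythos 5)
Your proposal is correct and takes essentially the same route as the paper, which offers no proof at all beyond citing Baranyai's Rounding Lemma and stating the proposition as a consequence. Your additional checks (that blank cells remain zero, that the grand total is already the integer $n$ and so is preserved, and that the strong form of the lemma controlling the total sum is the one needed) are exactly the right details to make the citation rigorous.
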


In light of this, if $\Gamma=(\gamma_{i,j})$ is an $M$\!-distribution matrix and $n$ a positive integer,
we use $\Gridhash_{\Gamma n}(M)$ to denote $\Gridhash_A(M)$, where $A$ is some integer $M$\!-admissible matrix of weight $n$ each of whose entries differs from the corresponding entry of $n\Gamma$ by less than one, the existence of such an $A$ being guaranteed by Proposition~\ref{propBaranyai}.
The exact choice of $A$ is of no consequence to our arguments.
Note that in any gridded permutation in $\Gridhash_{\Gamma n}(M)$ the proportion of points in cell $(i,j)$ differs from $\gamma_{i,j}$ by less than $1/n$.
Let $\Gridhash_\Gamma(M)=\bigcup_{n\geqs0}\Gridhash_{\Gamma n}(M)$.

\subsection{Asymptotics of gridded permutations with a given distribution}
\label{sectCountnGammaGriddings}

Suppose $\tau=\gamma_1+\gamma_2+\ldots+\gamma_k$, where each $\gamma_i>0$.
Then Stirling's approximation gives the following asymptotic form for a multinomial coefficient.
\begin{equation}\label{eqMultinomial}
\binom{\tau\+n}{\gamma_1 n,\gamma_2 n,\ldots,\gamma_k n}
\;\sim\;
\sqrt{\frac{\tau}{(2\+\pi)^{k-1}\+\gamma_1\+\gamma_2\ldots\gamma_k}}
\+
n^{-(k-1)/2}
\+
\left(\frac{\tau^\tau}{\gamma_1^{\gamma_1}\+\gamma_2^{\gamma_2}\ldots\+\gamma_k^{\gamma_k}}\right)^{\!n}
.
\end{equation}
In addition, the factorial bounds $\sqrt{2\pi n}(n/e)^n < n! \leqslant e\sqrt{n}(n/e)^n$, valid for all positive $n$,
imply that the multinomial coefficient is always bounded above by a constant multiple of the right hand side of~\eqref{eqMultinomial}.
Observe also that if each $\gamma_in\geqs1$ and $\tau\leqs1$ then $\frac{\tau}{\gamma_1\+\gamma_2\ldots\gamma_k}\leqs n^k$.

Hence, by Proposition~\ref{propCountAGriddings}, we have the following asymptotic enumeration of $\Gridhash_\Gamma(M)$ and bound on the size of $\Gridhash_{\Gamma n}(M)$.
\begin{prop}\label{propCountnGammaGriddings}
If $\Gamma=(\gamma_{i,j})$ is an $M$\!-distribution matrix with
row sums $\rho_i=\sum_j\gamma_{i,j}$
and
column sums $\kappa_j=\sum_i\gamma_{i,j}$,
then
\[
\big|\Gridhash_{\Gamma n}(M)\big| \;\sim\; C \+ n^\beta \+ g^n ,
\]
where
\[
g \eq g(\Gamma) \defeq
\prod_i \frac{\rho_i^{\rho_i}}{\prod_j \gamma_{i,j}^{\gamma_{i,j}}}
\:\times\:
\prod_j \frac{\kappa_j^{\kappa_j}}{\prod_i \gamma_{i,j}^{\gamma_{i,j}}}
,
\]
and $C$ and $\beta$ are constants that only depend on~$\Gamma$.

Moreover, there exist constants $C'$ and $\beta'$ that only depend on~$M$, such that for all $n$,
\[
\big|\Gridhash_{\Gamma n}(M)\big| \;<\; C' \+ n^{\beta'} \+ g^n .
\]
\end{prop}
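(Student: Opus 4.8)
The plan is to prove the two claims of Proposition~\ref{propCountnGammaGriddings} separately, treating the asymptotic equivalence first and the uniform upper bound second, since the latter requires a bit more care to make the constants depend only on $M$ rather than on~$\Gamma$.

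For the asymptotic equivalence, I would start from the exact product formula of Proposition~\ref{propCountAGriddings} applied to the integer matrix $A$ of weight $n$ approximating $n\Gamma$, and replace each of the $r+s$ multinomial coefficients by its Stirling asymptotic form~\eqref{eqMultinomial}. The crucial bookkeeping point is that the entries of $A$ differ from $n\gamma_{i,j}$ by less than one, and likewise the row and column sums of $A$ differ from $n\rho_i$ and $n\kappa_j$ by less than~$1$; one checks that perturbing each argument of a multinomial coefficient by $O(1)$ changes it only by a bounded factor tending to a constant, so the leading exponential term $g^n$ and the polynomial order $n^\beta$ (where $\beta = -\tfrac12\sum_i(s_i-1) - \tfrac12\sum_j(t_i-1)$, with $s_i$ and $t_j$ the numbers of non-blank cells in row~$i$ and column~$j$) are unaffected, and the product of the square-root prefactors converges to the constant~$C$. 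Multiplying the exponential bases from all rows and columns gives exactly the stated formula for $g(\Gamma)$. The only subtlety is that some entries $\gamma_{i,j}$ could be zero even when $M_{i,j}$ is non-blank; I would handle this by noting that such cells contribute a bounded number of points (or contribute trivially), effectively reducing to the case where the relevant coefficients genuinely involve only the positive entries, so that the Stirling form with its positivity hypothesis applies.

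For the uniform bound, I would use the stated fact that each multinomial coefficient is bounded above by a \emph{constant} (absolute, from the factorial inequalities $\sqrt{2\pi n}(n/e)^n < n! \leqslant e\sqrt n\,(n/e)^n$) times the right-hand side of~\eqref{eqMultinomial}, together with the observation recorded just before the proposition: if each $\gamma_i n \geqslant 1$ and $\tau \leqslant 1$, then $\tau/(\gamma_1\cdots\gamma_k) \leqslant n^k$, so the square-root prefactor is at most $n^{k/2}$ up to an absolute constant. Applying this to each of the $r+s$ coefficients, using that every row sum and column sum of $\Gamma$ is at most~$1$ and that cells contain at least one point wherever they are non-empty (cells with zero points can simply be dropped from the corresponding multinomial, only helping the bound), the product of prefactors is at most $n^{\beta'}$ for some $\beta'$ depending only on the dimensions and blank-pattern of $M$, and the product of exponential bases is exactly $g(\Gamma)^n$. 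Collecting the absolute constants into a single $C'$ depending only on $M$ finishes this half.

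I expect the main obstacle to be the careful handling of the $O(1)$ perturbations coming from Baranyai rounding: one must verify that replacing $n\gamma_{i,j}$ by $a_{i,j} = n\gamma_{i,j} + O(1)$ throughout the multinomial coefficients genuinely leaves the asymptotic form~\eqref{eqMultinomial} intact with the \emph{same} $g$, $C$, and $\beta$ — in particular that $a_{i,j}^{a_{i,j}} / (n\gamma_{i,j})^{n\gamma_{i,j}}$ stays bounded between two positive constants as $n\to\infty$, which follows from $\log$-convexity / the mean value theorem applied to $x\log x$ but should be stated explicitly. A secondary nuisance is the edge case of non-blank cells with $\gamma_{i,j}=0$, where the clean Stirling statement~\eqref{eqMultinomial} does not literally apply; the cleanest fix is to observe that omitting a zero-weight cell from a multinomial coefficient leaves it unchanged, so one may pass to the sub-pattern of cells with $\gamma_{i,j}>0$ before invoking~\eqref{eqMultinomial}, and the formula for $g(\Gamma)$ is unchanged since $0^0 = 1$ under the usual convention.
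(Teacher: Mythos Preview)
Your proposal is correct and follows essentially the same approach as the paper: the paper derives the proposition directly from Proposition~\ref{propCountAGriddings} by applying the Stirling asymptotics~\eqref{eqMultinomial} to each multinomial factor, together with the factorial bounds and the observation $\tau/(\gamma_1\cdots\gamma_k)\leqs n^k$ stated just before the proposition for the uniform bound. Your treatment is in fact more detailed than the paper's, which leaves the handling of the Baranyai $O(1)$ rounding and the zero-entry edge case implicit; your remarks on both points are apt and fill genuine gaps in the exposition.
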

Note that $g(\Gamma)=\liminfty\big|\Gridhash_{\Gamma n}(M)\big|^{1/n}=\gr\big(\Gridhash_\Gamma(M)\big)$ is the exponential growth rate of those $M$\!-gridded permutations whose points are distributed between the cells in the proportions specified by $\Gamma$.

To avoid problems below, we take the products in the denominators of the expression for $g(\Gamma)$ to be over nonzero entries of $\Gamma$ only. 

\subsection{Maximising the growth rate}\label{sectMaximalMDistribution}

Given a grid class $\Grid(M)$,
we would like to find an $M$\!-distribution matrix $\Gamma=(\gamma_{i,j})$ for which the growth rate $g(\Gamma)$ is greatest.
We call such a matrix a \emph{maximal} $M$\!-distribution matrix.
The following result specifies equations that must be satisfied by the entries of such a matrix.

\begin{prop}\label{propMaxGR}
  Suppose $\Gamma=(\gamma_{i,j})$ is a maximal $M$\!-distribution matrix.
  Then there exists a constant $\lambda$ such that, for each nonzero entry $\gamma_{i,j}$ of $\Gamma$, we have
  \[
  \frac{\gamma_{i,j}^2}{\rho_i\+\kappa_j} \eq \lambda ,
  \]
  where $\rho_i=\sum\limits_j\gamma_{i,j}$ and $\kappa_j=\sum\limits_i\gamma_{i,j}$ are the row and column sums of $\Gamma$.
\end{prop}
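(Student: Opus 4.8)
The plan is to treat this as a constrained optimisation problem and apply Lagrange multipliers. First I would set up the problem cleanly: we are maximising $g(\Gamma)$, or equivalently $\log g(\Gamma) = \sum_i \rho_i \log \rho_i + \sum_j \kappa_j \log \kappa_j - 2\sum_{i,j} \gamma_{i,j}\log\gamma_{i,j}$ (using $\rho_i = \sum_j \gamma_{i,j}$, $\kappa_j = \sum_i \gamma_{i,j}$, and noting each off-diagonal $\gamma_{i,j}^{\gamma_{i,j}}$ appears once in a row product and once in a column product), subject to the single linear constraint $\sum_{i,j}\gamma_{i,j} = 1$, over the (relatively open) face of the simplex where $\gamma_{i,j} > 0$ exactly on the non-blank cells of $M$. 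Since a maximal matrix exists (the objective is continuous on a compact set) and, at a maximiser with all the relevant $\gamma_{i,j}$ strictly positive, the maximiser is an interior point of that face, it must be a critical point of the Lagrangian $\mathcal{L} = \log g(\Gamma) - \mu\big(\sum_{i,j}\gamma_{i,j} - 1\big)$.

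Next I would compute the partial derivative $\partial \mathcal{L}/\partial \gamma_{i,j}$ for a fixed non-blank cell $(i,j)$. Using $\partial \rho_i / \partial \gamma_{i,j} = 1$ and $\partial \kappa_j/\partial\gamma_{i,j} = 1$, and $\frac{d}{dx}(x\log x) = \log x + 1$, one gets
\[
\frac{\partial \mathcal{L}}{\partial \gamma_{i,j}} = (\log\rho_i + 1) + (\log\kappa_j + 1) - 2(\log\gamma_{i,j} + 1) - \mu = \log\rho_i + \log\kappa_j - 2\log\gamma_{i,j} - \mu.
\]
Setting this to zero gives $\log\big(\rho_i\kappa_j/\gamma_{i,j}^2\big) = \mu$, i.e. $\gamma_{i,j}^2/(\rho_i\kappa_j) = e^{-\mu}$, which is the claimed identity with $\lambda = e^{-\mu}$ (a constant independent of $i,j$). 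One should note the $+1$ bookkeeping: each $\gamma_{i,j}$ with $i,j$ non-blank contributes to exactly one $\rho_i$ term and one $\kappa_j$ term, so the two $+1$'s from those cancel against the $-2$ from the $-2(\log\gamma_{i,j}+1)$ term, leaving a clean logarithmic equation; I would double-check this cancellation carefully since it is the one place a sign or counting slip could creep in.

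The main obstacle, and the step deserving the most care, is justifying that the maximiser lies in the relative interior of the face where all non-blank entries are positive — that is, ruling out a maximiser with some $\gamma_{i,j} = 0$ for a non-blank cell $(i,j)$. If a maximiser sat on the boundary, the Lagrange condition would only hold as an inequality and the clean equality could fail. The natural argument is that the objective $\log g(\Gamma)$ behaves well as an entry approaches zero: the term $-2\gamma_{i,j}\log\gamma_{i,j} \to 0$, but one can show the gradient in the direction of increasing a vanishing $\gamma_{i,j}$ from zero is $+\infty$ (the $-2\log\gamma_{i,j}$ term dominates), so moving mass into an empty non-blank cell strictly increases $g$; hence no boundary point of the face can be maximal. (There is a subtlety if an entire row or column would otherwise be empty, making $\rho_i$ or $\kappa_j$ vanish, but since $M$ has no empty row or column — implicit in it being a sensible gridding matrix — and the maximiser has full support on non-blank cells, all $\rho_i, \kappa_j$ are positive, so the logarithms are well-defined.) Once interiority is established, the Lagrange computation above completes the proof; I would also remark that this proposition gives necessary conditions only, deferring questions of uniqueness to the later Section~\ref{sectMaximalMDistributionUnique}.
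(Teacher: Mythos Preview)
Your proposal is correct and follows essentially the same approach as the paper: both maximise $\log g(\Gamma)$ via Lagrange multipliers, compute the same partial derivative $\log\rho_i+\log\kappa_j-2\log\gamma_{i,j}\pm\mu$, and read off $\lambda=e^{\mp\mu}$ (the sign difference arising only from whether the constraint is written as $1-\sum\gamma_{i,j}$ or $\sum\gamma_{i,j}-1$). Your additional discussion of interiority is more than the paper provides and more than the proposition strictly requires, since the claim is only made for the \emph{nonzero} entries of $\Gamma$; treating just those as the variables (as the paper does with ``Viewing the nonzero $\gamma_{i,j}$ as formal variables'') already places the maximiser in the relative interior of the relevant face, so Lagrange applies directly.
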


\begin{proof}
We want to determine which choices of values for the $\gamma_{i,j}$ maximise $g(\Gamma)$, subject to the requirement that $\sum\gamma_{i,j}=1$.
This is a constrained optimisation problem, so we use the method of Lagrange multipliers.
To simplify the algebra, we maximise $\log g(\Gamma)$, which is of course equivalent to maximising $g(\Gamma)$.
Viewing the nonzero $\gamma_{i,j}$ as formal variables, we introduce the auxiliary function
\[
L(\Gamma,\mu) \eq \log g(\Gamma) \;-\; \mu\big(1-\sum\gamma_{i,j}\big) .
\]
Then $\Gamma$ must satisfy
\[
\frac{\partial}{\partial \mu} L(\Gamma,\mu) \eq 0
\qquad \text{and} \qquad
\frac{\partial}{\partial \gamma_{i,j}} L(\Gamma,\mu) \eq 0,
\]
for each nonzero $\gamma_{i,j}$.
The first of these requirements is simply 
the constraint that $\sum\gamma_{i,j}=1$ (that is, $\Gamma$ has weight one).

Now,
\[
\log g(\Gamma) \eq
\sum_i \Big({{\rho_i}\log\rho_i}\,-\,{\sum_j {\gamma_{i,j}}\log\gamma_{i,j}}\Big)
\:+\:
\sum_j \Big({{\kappa_j}\log\kappa_j}\,-\,{\sum_i {\gamma_{i,j}}\log\gamma_{i,j}}\Big)
.
\]
Given $i$ and $j$, for brevity let $\gamma=\gamma_{i,j}$.
Also let $\overline{\rho}=\rho_i-\gamma$ and $\overline{\kappa}=\kappa_j-\gamma$ be the sum of the entries other than $\gamma$ in row $i$ and in column $j$ of $\Gamma$, respectively.
Both $\overline{\rho}$ and $\overline{\kappa}$ are independent of~$\gamma$, as are the contributions to $L(\Gamma,\mu)$ from any other row or column.
Then,
\begin{align*}
\frac{\partial}{\partial \gamma_{i,j}} L(\Gamma,\mu) 
\eq &
\frac{\partial}{\partial \gamma} \Big( {(\overline{\rho}+\gamma)}\log(\overline{\rho}+\gamma) \:+\: {(\overline{\kappa}+\gamma)}\log(\overline{\kappa}+\gamma) \:-\: 2{\gamma}\log\gamma \:+\: \mu\gamma \Big) \\[3pt]
\eq & \big( 1 + \log(\overline{\rho}+\gamma) \big) \:+\: \big( 1 + \log(\overline{\kappa}+\gamma) \big) \:-\: 2\big( 1 + \log\gamma \big) \:+\: \mu \\[3pt]
\eq & \log\rho_i \:+\: \log\kappa_j \:-\: 2\log\gamma_{i,j} \:+\: \mu .
\end{align*}
Thus,
\[
\frac{\partial}{\partial \gamma_{i,j}} L(\Gamma,\mu) \eq 0
\quad \text{only if} \quad
\frac{\gamma_{i,j}^2}{\rho_i\+\kappa_j} \eq e^\mu .
\]
The result follows by letting $\lambda=e^\mu$.
\end{proof}

\subsection{Uniqueness of the maximal distribution for connected classes}\label{sectMaximalMDistributionUnique}

Albert and Vatter~\cite{AV2019} give a different characterisation of maximal $M$\!-distribution matrices (in a more general context),
using the theory of singular value decompositions of matrices.

Given a real matrix $A$, a non-negative real number $s$ is a \emph{singular value} for $A$ if there exist unit vectors $\mathbf{u}$ and~$\mathbf{v}$ such that
$A\mathbf{v}=s\mathbf{u}$ and $A^{\mathsf{T}}\mathbf{u}=s\mathbf{v}$.
The vectors $\mathbf{u}$ and $\mathbf{v}$ are called \emph{left singular} and \emph{right singular} vectors for $s$, respectively.
If $s$ is a {singular value} for $A$ then $s^2$ is an eigenvalue of both $AA^{\mathsf{T}}$ and $A^{\mathsf{T}}\!A$.
Moreover, every left singular vector for $s$ is an eigenvector of $AA^{\mathsf{T}}$ for the eigenvalue $s^2$,
and every right singular vector for $s$ is an eigenvector of $A^{\mathsf{T}}\!A$ for the eigenvalue $s^2$.

\newcommand{\Moi}{M^{\scriptscriptstyle{\mathsf{01}}}}

Given a gridding matrix $M$, let $\Moi$ be the binary 
matrix such that $\Moi_{i,j}=0$ if $M_{i,j}$ is blank, and $\Moi_{i,j}=1$ otherwise.
Maximal $M$\!-distribution matrices can be characterised as follows.

\begin{prop}[{Albert and Vatter~\cite{AV2019}}]\label{propSVD}
  Suppose $\Gamma=(\gamma_{i,j})$ is a maximal $M$\!-distribution matrix.
  Then $\gamma_{i,j} = \Moi_{i,j}\,\mathbf{v}_i\mathbf{u}_j/s$, where $s$ is the greatest singular value of $\Moi$, and $\mathbf{u}$ and $\mathbf{v}$ are left and right singular vectors for $s$, respectively.
\end{prop}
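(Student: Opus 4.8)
The plan is to connect the Lagrange-multiplier characterisation already obtained in Proposition~\ref{propMaxGR} with the singular value decomposition of $\Moi$. We have a maximal $M$\!-distribution matrix $\Gamma=(\gamma_{i,j})$, together with a constant $\lambda$ such that $\gamma_{i,j}^2 = \lambda\,\rho_i\kappa_j$ for every nonzero entry, where $\rho_i$ and $\kappa_j$ are the row and column sums. The first step is to take square roots: since all quantities are nonnegative, $\gamma_{i,j} = \sqrt{\lambda}\,\sqrt{\rho_i}\,\sqrt{\kappa_j}$ whenever $M_{i,j}$ is non-blank, and $\gamma_{i,j}=0$ otherwise; that is, $\gamma_{i,j} = \sqrt{\lambda}\,\Moi_{i,j}\sqrt{\rho_i}\sqrt{\kappa_j}$. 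This already has the shape of the claimed formula, with $\mathbf{v}_i$ proportional to $\sqrt{\rho_i}$ and $\mathbf{u}_j$ proportional to $\sqrt{\kappa_j}$. What remains is to verify that these vectors are genuinely a singular vector pair of $\Moi$ for its largest singular value, and to pin down the normalising constant.

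Next I would substitute the factorised form back into the definitions of the row and column sums. Summing over $j$, we get $\rho_i = \sum_j \gamma_{i,j} = \sqrt{\lambda}\,\sqrt{\rho_i}\sum_j \Moi_{i,j}\sqrt{\kappa_j}$, hence $\sqrt{\rho_i} = \sqrt{\lambda}\sum_j \Moi_{i,j}\sqrt{\kappa_j}$, i.e.\ $\Moi\,\mathbf{q} = \lambda^{-1/2}\,\mathbf{p}$ where $p_i = \sqrt{\rho_i}$ and $q_j = \sqrt{\kappa_j}$. Symmetrically, summing over $i$ gives $(\Moi)^{\mathsf{T}}\mathbf{p} = \lambda^{-1/2}\,\mathbf{q}$. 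Writing $s=\lambda^{-1/2}$, after normalising $\mathbf{p}$ and $\mathbf{q}$ to unit vectors $\mathbf{v}$ and $\mathbf{u}$ (they have the same Euclidean norm, since $\|\mathbf{p}\|^2 = \sum_i\rho_i = 1 = \sum_j\kappa_j = \|\mathbf{q}\|^2$, both equalling the weight of $\Gamma$), we see that $s$ is a singular value of $\Moi$ with right singular vector $\mathbf{u}$ and left singular vector $\mathbf{v}$, and that $\gamma_{i,j} = \sqrt{\lambda}\,\Moi_{i,j}\,p_i q_j = \Moi_{i,j}\,\mathbf{v}_i\mathbf{u}_j/s$ (the factors of $\|\mathbf{p}\|$ cancel appropriately). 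Since the entries of $\Gamma$ are nonnegative, $\mathbf{p}$ and $\mathbf{q}$ may be taken to have nonnegative entries.

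The main obstacle is the final step: showing that $s$ is the \emph{greatest} singular value of $\Moi$, not merely some singular value. The natural tool is the Perron--Frobenius theorem applied to the nonnegative matrix $\Moi(\Moi)^{\mathsf{T}}$ (or its symmetric ``companion'' realisation): its Perron eigenvalue is the unique one admitting a nonnegative eigenvector, and $\mathbf{p}$ is such a vector for the eigenvalue $s^2$, so $s^2$ is the Perron eigenvalue, hence $s$ is the largest singular value of $\Moi$. There is a subtlety here if $\Moi$ is reducible (the grid class need not be connected at this stage), in which case one should restrict to the relevant irreducible block, or simply invoke the fact that the spectral radius of a nonnegative matrix always has a nonnegative eigenvector. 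I would also remark that some nonzero entries of $\Gamma$ could in principle force $p_i$ or $q_j$ to be zero; but if $\gamma_{i,j}>0$ then $\rho_i,\kappa_j>0$, so every index appearing in a non-blank cell carries a strictly positive coordinate, which is all that is needed. With these points addressed, the formula $\gamma_{i,j} = \Moi_{i,j}\,\mathbf{v}_i\mathbf{u}_j/s$ follows.
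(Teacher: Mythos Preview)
The paper does not give its own proof of this proposition; it is simply quoted from Albert and Vatter~\cite{AV2019}. Your derivation from Proposition~\ref{propMaxGR} is therefore not a comparison target but a genuine contribution, and the outline is sound: taking square roots of the stationarity condition, recognising the resulting factorisation as a rank-one structure, and verifying the singular-vector equations by re-summing the rows and columns is exactly the right mechanism. The normalisation step is also clean, since $\|\mathbf{p}\|^2=\sum_i\rho_i=1=\|\mathbf{q}\|^2$ directly, so no cancellation is actually needed.

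There is one gap worth naming. Proposition~\ref{propMaxGR} only asserts $\gamma_{i,j}^2=\lambda\rho_i\kappa_j$ for \emph{nonzero} entries, yet you pass to $\gamma_{i,j}=\sqrt{\lambda}\,\Moi_{i,j}\sqrt{\rho_i}\sqrt{\kappa_j}$ for \emph{all} cells. This silently assumes that no non-blank cell has $\gamma_{i,j}=0$ while $\rho_i,\kappa_j>0$. That assumption is correct, but it needs an argument: the partial derivative $\partial(\log g)/\partial\gamma_{i,j}=\log(\rho_i\kappa_j/\gamma_{i,j}^2)$ tends to $+\infty$ as $\gamma_{i,j}\to0^+$ with $\rho_i,\kappa_j$ bounded away from zero, so any such boundary point can be improved and is not maximal. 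Without this, the substitution into the row and column sums is unjustified. The second subtlety you already flag --- that for a reducible $\Moi$ a nonnegative eigenvector need not pick out the spectral radius --- is real; the cleanest fix is to observe that the construction can be reversed (any nonnegative singular pair for a singular value $s'$ yields an admissible distribution with growth rate $s'^2$), so maximality of $\Gamma$ forces $s$ to be the largest singular value.
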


In general, left and right singular vectors may not be unique, so $\Grid(M)$ may have multiple maximal $M$\!-distribution matrices.
However, if we limit which classes we consider, uniqueness can be guaranteed.

Given a gridding matrix $M$, its \emph{cell graph} is the graph whose vertices are its non-blank cells,
and in which two vertices are adjacent if they share a row or a column and all the cells between them are blank.

For example, the cell graph of \gcthree{4}{1,-1,0,1}{0,0,-1}{0,-1,1} is
\raisebox{-6pt}{
\begin{tikzpicture}[scale=0.25]
  \plotpermnobox{}{3,3,0,3}
  \plotpermnobox{}{0,0,2}
  \plotpermnobox{}{0,1,1}
  \draw (1,3)--(4,3);
  \draw (2,3)--(2,1)--(3,1)--(3,2);
\end{tikzpicture}}.

If the cell graph of a gridding matrix is connected, then we also say that the matrix and the corresponding grid class are \emph{connected}.

A connected grid class has a unique maximal distribution matrix.
To prove that this is the case, we extend the notion of connectivity to real matrices.
We say that a nonnegative matrix $A$ is \emph{connected} if there do not exist permutation matrices $P$ and $Q$ such that $PAQ=\begin{smallmx}R&0\\0&S\end{smallmx}$ for some non-trivial $R$ and~$S$.

\begin{prop}\label{propUniqueSVD}
  If a nonnegative matrix $A$ is connected, then its greatest singular value $s$ 
  has unique left and right singular vectors $\mathbf{u}$ and $\mathbf{v}$, and
  the entries of $\mathbf{u}$ and $\mathbf{v}$ are all positive.
\end{prop}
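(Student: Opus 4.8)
The plan is to reduce the statement to the Perron--Frobenius theorem applied to the symmetric nonnegative matrix $B = AA^{\mathsf T}$ (and, symmetrically, to $A^{\mathsf T} A$). Recall from the discussion preceding the proposition that the left singular vectors of $A$ for the singular value $s$ are exactly the unit eigenvectors of $B$ for the eigenvalue $s^2$, and similarly on the right. So it suffices to show that $s^2$ is a simple eigenvalue of $B$ with a positive eigenvector, and likewise for $A^{\mathsf T} A$; uniqueness of $\mathbf u$ then forces uniqueness of $\mathbf v$ via $A^{\mathsf T}\mathbf u = s\mathbf v$ (using $s>0$, which holds because $A$ is connected and hence nonzero).

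First I would record that $B=AA^{\mathsf T}$ is symmetric, nonnegative, and that $s^2$ is its largest eigenvalue (since $\|A\|_{\mathrm{op}}^2 = \|B\|_{\mathrm{op}}$ and $B$ is positive semidefinite, so its spectral radius equals its largest eigenvalue). The key structural step is to translate connectedness of $A$ into irreducibility of $B$: I claim that if $A$ is connected in the sense defined in the excerpt (no simultaneous row/column permutation brings $A$ to block-diagonal form with two nontrivial blocks), then $B$ is an irreducible matrix. One clean way to see this is via the bipartite graph $H(A)$ on vertex set (rows)$\,\sqcup\,$(columns) with an edge $(i,j)$ whenever $A_{i,j}\neq0$: connectedness of $A$ is equivalent to connectedness of $H(A)$, and $B_{i,i'}\neq0$ precisely when rows $i,i'$ have a common column-neighbour in $H(A)$, i.e. are at distance $2$ in $H(A)$; hence the graph underlying $B$ is connected on the row-vertices, which is exactly irreducibility of $B$. (A small subtlety: one must note that $A$ connected and nonzero has no zero row or zero column, since a zero row would be a trivial block by itself — actually a zero row does give a $1\times(\text{something})$ block, contradicting connectivity unless $A$ is that single row; handle the degenerate $1\times m$ and $r\times 1$ cases separately, where the statement is immediate.) Then Perron--Frobenius for irreducible nonnegative matrices gives that the spectral radius $s^2$ of $B$ is a simple eigenvalue with a strictly positive eigenvector $\mathbf u$, unique up to scaling, hence unique as a unit vector up to sign. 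The same argument applied to $A^{\mathsf T} A$ yields a unique positive unit right singular vector $\mathbf v$.

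Finally I would tie the two together: given the unique unit $\mathbf u$, the relation $A^{\mathsf T}\mathbf u = s\mathbf v$ determines $\mathbf v$ uniquely (as $s>0$), and consistency of signs is fixed by demanding $\mathbf u,\mathbf v$ both have positive entries. Conversely $A\mathbf v = s\mathbf u$ recovers $\mathbf u$, so the pairing is genuinely a bijection and there is no residual ambiguity. The main obstacle — really the only non-bookkeeping point — is the reduction ``$A$ connected $\implies$ $AA^{\mathsf T}$ and $A^{\mathsf T} A$ irreducible''; everything after that is a direct invocation of Perron--Frobenius. I would be careful to phrase connectedness consistently (the definition allows $P\neq Q$, i.e. independent row and column permutations, which is exactly what matches connectedness of the bipartite graph $H(A)$, not of any graph on rows alone), and to dispose of the one-row and one-column edge cases up front so that ``nontrivial block'' arguments are not vacuous.
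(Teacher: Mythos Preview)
Your proposal is correct and follows essentially the same approach as the paper: interpret $A$ as the biadjacency matrix of a bipartite graph, observe that connectedness of $A$ makes the two one-mode projections connected so that $AA^{\mathsf T}$ and $A^{\mathsf T}\!A$ are irreducible, and then invoke Perron--Frobenius. You supply somewhat more bookkeeping (the edge cases and the explicit coupling of $\mathbf u$ and $\mathbf v$ via $A^{\mathsf T}\mathbf u=s\mathbf v$), but the structure of the argument is the same.
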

\begin{proof}
Consider $A$ to be a weighted biadjacency matrix of a bipartite graph $G$.
If $A$ is connected then so is $G$.
Both $A A^{\mathsf{T}}$ and $A^{\mathsf{T}}\!A$ are then symmetric weighted adjacency matrices of the projections $G_1$ and $G_2$ of $G$ onto its two vertex sets (in which nodes are adjacent if they share a neighbour in $G$). If $G$ is connected, then so are $G_1$ and $G_2$. Thus $A A^{\mathsf{T}}$ and $A^{\mathsf{T}}\!A$ are both irreducible.


  The result then follows from the Perron--Frobenius Theorem, given that $s^2$ is the principal eigenvalue of both $AA^{\mathsf{T}}$ and $A^{\mathsf{T}}\!A$,
  with $\mathbf{u}$ and $\mathbf{v}$ the corresponding (unique positive) principal eigenvectors.
\end{proof}

With this, we can prove that connectivity is a sufficient condition for a grid class to have a unique maximal $M$\!-distribution matrix.

\begin{prop}\label{propUniqueMaxDistrib}
  If $\Grid(M)$ is a connected grid class, then it has a unique maximal $M$\!-distribution matrix $\Gamma=(\gamma_{i,j})$.
  Moreover, $\gamma_{i,j}$ is positive if and only if $M_{i,j}$ is not blank.
\end{prop}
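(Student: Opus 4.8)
The plan is to feed the structural description of maximal $M$-distribution matrices in Proposition~\ref{propSVD} into the uniqueness statement for singular vectors in Proposition~\ref{propUniqueSVD}; the only real work is to bridge the combinatorial hypothesis (the cell graph of $M$ is connected) and the linear-algebraic hypothesis of Proposition~\ref{propUniqueSVD} (the $0/1$ matrix $\Moi$ admits no nontrivial block decomposition $P\,\Moi\,Q=\begin{smallmx}R&0\\0&S\end{smallmx}$). Existence of a maximal matrix is immediate: the set of $M$-distribution matrices is a compact polytope (a face of the standard simplex cut out by the admissibility constraints), and $\log g(\Gamma)$ extends continuously to it under the convention $x\log x=0$ at $x=0$, so it attains a maximum there.

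First I would reduce to the case that $M$ has no entirely blank row and no entirely blank column. Deleting such a row or column changes neither the permutation class $\Grid(M)$ nor — up to the forced insertion of a zero row or column, since admissibility makes that row or column identically zero — the set of $M$-distribution matrices, and it leaves the cell graph unchanged up to relabelling (the set of cells lying strictly between two fixed cells merely loses some blank members, so no cell-graph adjacency is created or destroyed). Under this reduction every row and every column of $\Moi$ is nonzero, so we may regard $\Moi$ as the biadjacency matrix of a bipartite graph $G$ whose vertex classes are the rows and the columns of $M$, with row $i$ joined to column $j$ exactly when cell $(i,j)$ is non-blank. Since $G$ then has no isolated vertices, $\Moi$ is a connected matrix if and only if $G$ is a connected graph; and $G$ is connected if and only if the cell graph of $M$ is connected. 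For the latter equivalence: if $G$ is connected, then any two non-blank cells, regarded as edges of $G$, are joined by a path whose consecutive edges share a row or a column, and two non-blank cells in a common row are joined in the cell graph by stepping through the successive non-blank cells of that row (consecutive such cells being cell-graph-adjacent, since everything strictly between them in the row is blank); conversely, if the cell graph is connected, then — using that no row or column of $M$ is blank — every vertex of $G$ is incident to some non-blank cell, and a cell-graph path between two such cells induces a walk in $G$ joining the chosen vertices.

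Putting the pieces together, the hypothesis that $\Grid(M)$ is connected gives, after the reduction, that $\Moi$ is a connected matrix; so by Proposition~\ref{propUniqueSVD} the greatest singular value $s$ of $\Moi$ has unique left and right singular vectors $\mathbf u$ and $\mathbf v$, whose entries are all strictly positive. By Proposition~\ref{propSVD} every maximal $M$-distribution matrix has the form $\gamma_{i,j}=\Moi_{i,j}\,\mathbf v_i\mathbf u_j/s$; since $\mathbf u$ and $\mathbf v$ are now determined, there is exactly one matrix of this form, and because a maximal matrix does exist it must be this one, giving uniqueness. Finally $\mathbf v_i\mathbf u_j/s>0$ for all $i,j$, so $\gamma_{i,j}>0$ precisely when $\Moi_{i,j}=1$, that is, precisely when $M_{i,j}$ is not blank; reinstating as zero rows and columns any blank rows and columns deleted in the reduction recovers the statement for the original $M$.

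The step I expect to be the main obstacle is the graph-theoretic equivalence in the second paragraph: getting the details right given that cell-graph adjacency insists that \emph{all} intervening cells be blank whereas adjacency in $G$ needs only a shared row or column, and, crucially, noticing that a fully blank row or column yields a zero row or column of $\Moi$ and hence breaks matrix-connectivity while possibly leaving the cell graph connected — which is exactly why the preliminary reduction cannot be skipped.
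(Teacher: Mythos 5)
Your proof is correct and takes essentially the same route as the paper: existence by compactness, then Propositions~\ref{propSVD} and~\ref{propUniqueSVD} combined to pin down the unique positive candidate. The only difference is one of detail — the paper disposes of the bridge with the single assertion ``Since $M$ is connected, then so is $M^{\scriptscriptstyle\mathsf{01}}$,'' whereas you prove it, correctly noting that one must first delete blank rows and columns (which would otherwise give zero rows or columns of $M^{\scriptscriptstyle\mathsf{01}}$ and defeat matrix connectivity) and then verify the equivalence between connectivity of the cell graph and of the bipartite graph underlying $M^{\scriptscriptstyle\mathsf{01}}$.
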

\begin{proof}
  Since $M$ is connected, then so is $\Moi$.
  Propositions~\ref{propSVD} and~\ref{propUniqueSVD} then guarantee that $\Gamma$ is unique, with $\gamma_{i,j} = \Moi_{i,j}\,\mathbf{v}_i\mathbf{u}_j/s$, where $s$ is the greatest singular value of $\Moi$, and $\mathbf{u}$ and $\mathbf{v}$ are the unique left and right singular vectors for $s$.
  Since $\mathbf{u}$ and $\mathbf{v}$ are positive, $\gamma_{i,j}>0$
  if and only if $M_{i,j}$ is not blank.
\end{proof}

If $M$ is connected, then we use $\Gamma_{\!M}$ to denote the unique maximal $M$\!-distribution matrix.

\subsection{The distribution of points in a typical gridded permutation}\label{sectTypicalDistribution}

We now prove that, if $\Grid(M)$ is connected, then the distribution of points between cells in
almost all $M$\!-gridded permutations is close to that specified by~$\Gamma_{\!M}$. 

Given any $\sigma^\#\in\Gridhash_n(M)$,
let $\sigma^\#_{(i,j)}$ denote the number of points of $\sigma^\#$ in cell $(i,j)$, and
let $\Gamma_{\!\sigma^\#}=\big(\sigma^\#_{(i,j)}/n\big)$ be the $M$\!-distribution matrix recording the proportion of the points of $\sigma^\#$ in each cell.

\begin{thm}\label{thmConnectedDistrib}
  If $\Grid(M)$ is connected and
  $\Gamma_{\!M}=(\gamma_{i,j})$
  is the unique maximal $M$\!-distribution matrix,
  then for any $\veps>0$,
  \[
  \liminfty \prob{ \max_{i,j} \big| \bsh_{(i,j)}/n - \gamma_{i,j} \big| \leqs \veps } \eq 1 ,
  \]
  where, for each $n$, the gridded permutation $\bsh$ is drawn uniformly at random from $\Gridhash_n(M)$.
\end{thm}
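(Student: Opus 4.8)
The plan is to show that the contribution to $|\Gridhash_n(M)|$ from gridded permutations whose distribution matrix $\Gamma_{\!\bsh}$ lies outside an $\veps$-neighbourhood of $\Gamma_{\!M}$ is exponentially negligible compared with the total. Fix $\veps>0$ and let $U_\veps$ be the set of $M$\!-distribution matrices $\Gamma$ with $\max_{i,j}|\gamma_{i,j}-(\Gamma_{\!M})_{i,j}|>\veps$. The key analytic input is Proposition~\ref{propCountnGammaGriddings}, which tells us that $|\Gridhash_{\Gamma n}(M)| < C'\+n^{\beta'}\+g(\Gamma)^n$ uniformly, and that $|\Gridhash_{\Gamma_{\!M}n}(M)| \sim C\+n^\beta\+g(\Gamma_{\!M})^n$. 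Since $\Gridhash_n(M)=\biguplus_{\|A\|=n}\Gridhash_A(M)$ and there are only polynomially many integer $M$\!-admissible matrices of weight $n$ (at most $(n+1)^{rs}$ of them), we have the two-sided polynomial-factor estimate
\[
\max_{\|A\|=n}\big|\Gridhash_A(M)\big| \;\leqs\; \big|\Gridhash_n(M)\big| \;\leqs\; (n+1)^{rs}\,\max_{\|A\|=n}\big|\Gridhash_A(M)\big|.
\]
So it suffices to prove that there is a constant $\delta>0$ (depending on $\veps$) with $g(\Gamma)\leqs g(\Gamma_{\!M})\cdot e^{-\delta}$ for every $\Gamma\in U_\veps$; then the non-typical gridded permutations number at most $(n+1)^{2rs}\,C'\,n^{\beta'}\,(e^{-\delta}g(\Gamma_{\!M}))^n$, which is exponentially smaller than the lower bound $C\,n^\beta\,g(\Gamma_{\!M})^n$ on $|\Gridhash_n(M)|$, and the stated probability tends to~$1$.

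The heart of the argument is therefore the strict-separation claim: $\sup_{\Gamma\in U_\veps}g(\Gamma) < g(\Gamma_{\!M})$. First I would observe that $\Gamma\mapsto\log g(\Gamma)$ extends to a continuous function on the compact set $\Delta$ of all $M$\!-distribution matrices (the entropy-like terms $\gamma\log\gamma$ extend continuously to $0$ at $\gamma=0$), so the supremum over the compact set $U_\veps\cap\Delta$ is attained, say at some $\Gamma^\star$. If $g(\Gamma^\star)=g(\Gamma_{\!M})$ then $\Gamma^\star$ is also a maximal $M$\!-distribution matrix, and Proposition~\ref{propUniqueMaxDistrib} (using connectedness of $M$) forces $\Gamma^\star=\Gamma_{\!M}$, contradicting $\Gamma^\star\in U_\veps$. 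Hence $g(\Gamma^\star)<g(\Gamma_{\!M})$, which gives the required $\delta$.

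The main obstacle is the subtle point that Proposition~\ref{propUniqueMaxDistrib} as stated characterises the \emph{global} maximiser, so I must be sure that any $\Gamma^\star$ achieving the supremum of $\log g$ over $U_\veps\cap\Delta$ and equal in value to the global maximum really is a global maximiser of $\log g$ over all of $\Delta$ — this is immediate since $g(\Gamma^\star)=g(\Gamma_{\!M})=\max_{\Gamma\in\Delta}g(\Gamma)$ by definition of a maximal distribution matrix. A secondary technical care point is that $g(\Gamma)$ could in principle vanish or behave badly where many entries of $\Gamma$ are forced to zero; but the convention (noted just after Proposition~\ref{propCountnGammaGriddings}) that the denominators range only over nonzero entries makes $g$ continuous and bounded on $\Delta$, and $g(\Gamma_{\!M})>1$ since the class is infinite, so no degeneracy arises. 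Everything else is the routine packaging of the polynomial-versus-exponential comparison described above, together with noting that $\Gamma_{\!\bsh}\notin U_\veps$ is exactly the event $\max_{i,j}|\bsh_{(i,j)}/n-\gamma_{i,j}|\leqs\veps$.
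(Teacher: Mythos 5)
Your proposal is correct and follows essentially the same route as the paper's proof: both decompose $\Gridhash_n(M)$ by distribution matrix, obtain a strict exponential gap $\sup_{\Gamma\in U_\veps}g(\Gamma)<g(\Gamma_{\!M})$ from continuity of $g$, compactness of the space of $M$\!-distribution matrices and the uniqueness of the maximiser for connected $M$, and then conclude via the polynomial bound on the number of weight-$n$ admissible matrices together with the uniform upper bound of Proposition~\ref{propCountnGammaGriddings}. The only cosmetic difference is that you take the supremum over all $\veps$-far distribution matrices rather than only those realised by some gridded permutation, which is harmless.
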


\begin{proof}
  For brevity, let $\Gamma=\Gamma_{\!M}$,
  and let $\SSS_n = \big\{
  \sigma^\#\in\Gridhash_n(M)
  \::\:
  \max_{i,j} \big| \sigma^\#_{(i,j)}/n - \gamma_{i,j} \big| \leqs \veps
  \big\}$
  consist of those $M$\!-gridded $n$-permutations whose distribution of points is \emph{$\veps$-close to $\Gamma$},
  the proportion of points in each cell differing by no more than $\veps$ from that specified by~$\Gamma$.

  Also, let $\overline{\SSS}_n = \Gridhash_n(M) \setminus \SSS_n$ be its complement,
  consisting of those $M$\!-gridded $n$-permutations whose distribution is \emph{$\veps$-far from~$\Gamma$},
  the proportion of points in some cell differing by more than~$\veps$ from that specified by~$\Gamma$.

  For sufficiently large $n$, we have $\Gridhash_{\Gamma n}(M) \subseteq \SSS_n$.
  Thus the exponential growth rate of $\SSS_n$ is bounded below by~$g(\Gamma)$:
  \[
  \gr(\SSS_n) \;\geqs\; \gr\big(\Gridhash_\Gamma(M)\big) \eq g(\Gamma) .
  \]

  Now let $\overline{\GGG}_n=\{\Gamma_{\!\sigma^\#}:\sigma^\#\in\overline{\SSS}_n\}$
  be the collection of $M$\!-distribution matrices that record the distribution of points in $M$\!-gridded $n$-permutations whose distribution is $\veps$-far from $\Gamma$.
  Then
  \[
  \overline{\SSS}_n \eq \bigcup_{\Gamma'\in\overline{\GGG}_n} \Gridhash_{\Gamma'n}(M) .
  \]
  Since $M$ is connected, $\Gamma$ is the only $M$\!-distribution matrix with maximal growth rate.
  Together with the continuity of the function $g(\cdot)$ and the compactness of the space of $M$\!-distribution matrices,
  this implies that
  there exists some $g_\veps<g(\Gamma)$ such that, for every $\Gamma'\in\bigcup_{n\geqs0}\overline{\GGG}_n$, we have $g(\Gamma')=\gr\big(\Gridhash_{\Gamma'}(M)\big) \leqs g_\veps$.

  Moreover, by the bound in Proposition~\ref{propCountAGriddings}, there exist constants $C'$ and $\beta'$, only dependent on~$M$,
  such that for each $\Gamma'$ and every $n$ we have $\big|\Gridhash_{\Gamma' n}(M)\big| < C' \+ n^{\beta'} \+ g_\veps^n$.
  Now $|\overline{\GGG}_n| < (n+1)^k$, where $k$ is the number of non-blank entries in~$M$.
  Hence the exponential growth rate of $\overline{\SSS}_n$ is strictly bounded above by~$g(\Gamma)$:
  \[
  \gr(\overline{\SSS}_n) \;\leqs\; g_\veps \;<\; g(\Gamma) .
  \]
  Thus
  $\liminfty |\overline{\SSS}_n|/|\SSS_n| = 0$ as required.
\end{proof}

\subsection{The shape of a typical permutation in a class}\label{sectLimitShapes}

Theorem~\ref{thmConnectedDistrib} and Proposition~\ref{propMaxGR}
enable us to calculate the asymptotic distribution of points in almost all gridded permutations in any 
connected class.
From this we can also determine what the plot of a typical large 
permutation in such a class looks like.
See Figure~\ref{figLimitShape} for an illustration.

\begin{figure}[ht]
  \centering
\includegraphics[width=.9in]{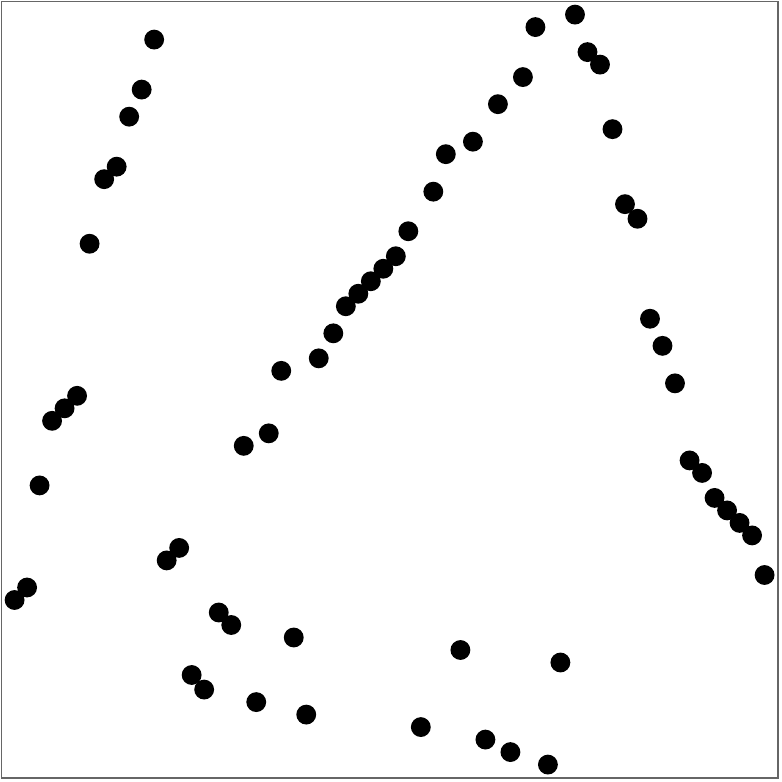}  \quad\;
\includegraphics[width=.9in]{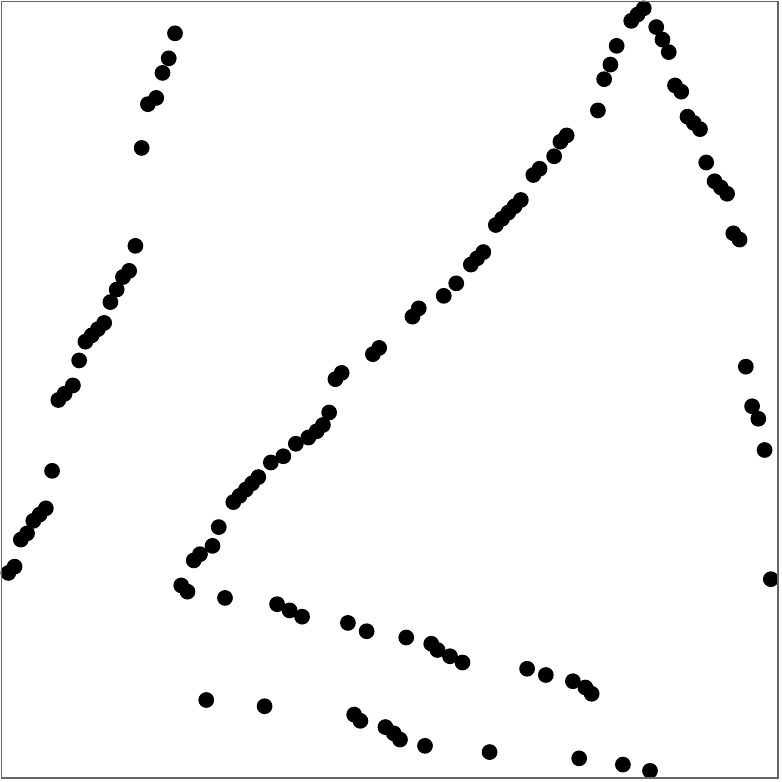} \quad\;
\includegraphics[width=.9in]{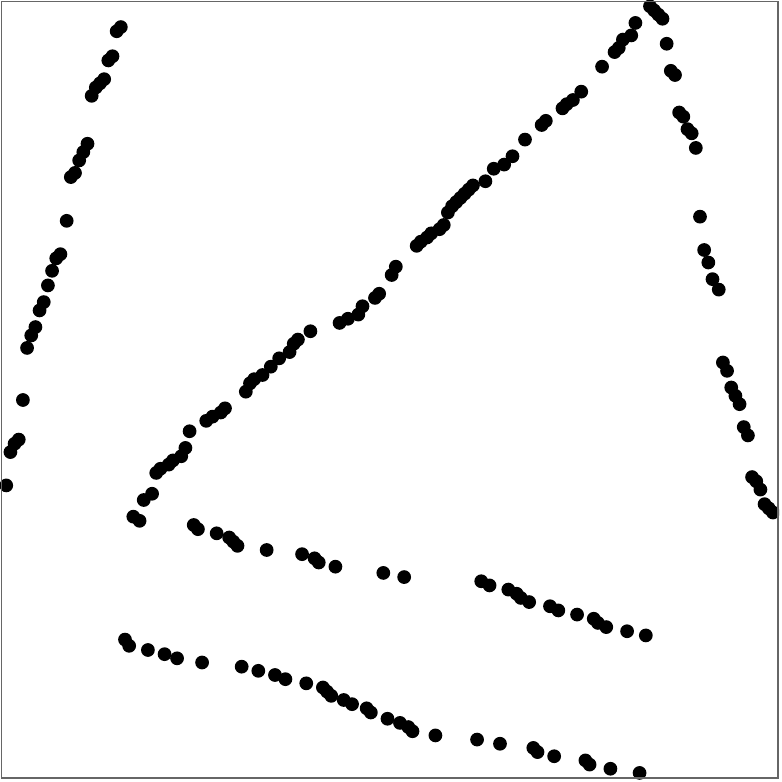} \quad\;
\includegraphics[width=.9in]{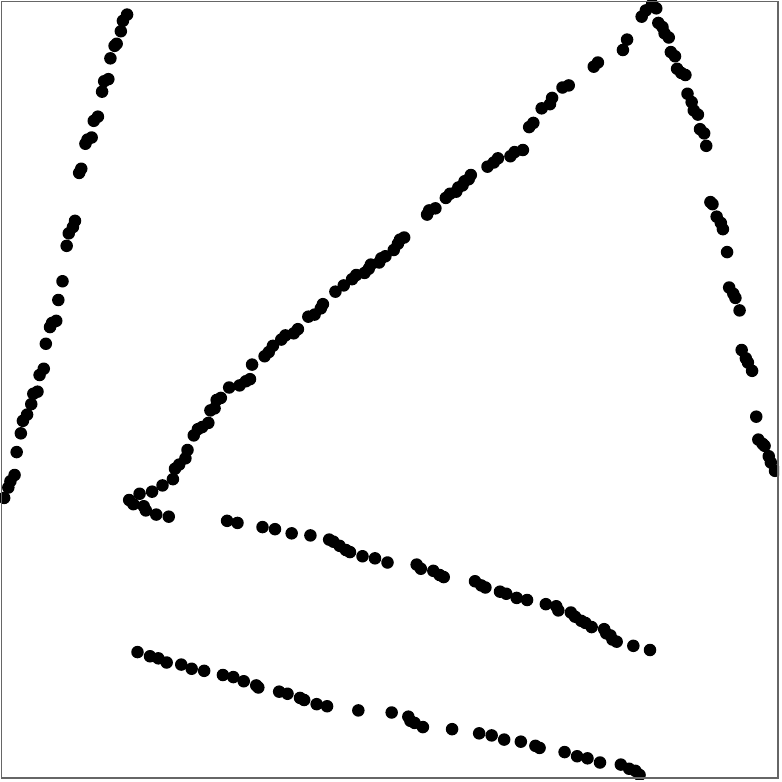}
\qquad\qquad
\includegraphics[width=.9in]{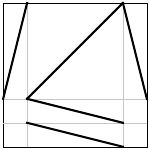}
  \caption{Plots of permutations of length 60, 120, 180 and 240 in $\Grid\big(\!\gcthree{3}{1,1,-1}{0,-1,0}{0,-1,0}\!\big)$, and the limit shape of the class}\label{figLimitShape}
\end{figure}

To formalise this idea,
we make use of certain probability measures on the unit square which act as analytic limits of sequences of permutations.
We briefly introduce only those parts of the theory that we need to state our result.
See the references below for more information.

A \emph{permuton} is a probability measure $\mu$ on the unit square $[0,1]^2$ with uniform marginals:
$  \mu ([a, b] \times [0,1]) \eq \mu ([0,1] \times [a, b]) \eq b - a $,
for every
$  0 \leqs a \leqs b \leqs 1 $.
There is a natural topology on the space of permutons obtained by restricting the weak topology on probability measures
(see~\cite{KKRW2020} for more details).

A permuton can be associated with a given permutation by taking its plot, scaling it into the unit square, and replacing the points with small squares,
as follows.
Suppose $A\subset[0,1]^2$ is a
rectangular region or a line segment.
Then we use
$\lambda_A$ to denote the unique probability measure on $[0,1]^2$ with support $A$ and mass uniformly distributed on~$A$.
If $\sigma$ is an $n$-permutation, then, for $i=1,\ldots,n$, let $S_\sigma(i)$ be the small square region $[(i-1)/n,i/n]\times[(\sigma(i)-1)/n,\sigma(i)/n]$.
Then
$\mu_\sigma = \sum_{i=1}^n \frac1n \lambda_{S_\sigma(i)}$
is the permuton corresponding to~$\sigma$.
See Figure~\ref{figPermPermuton} for an example.

\begin{figure}[ht]
  \centering
  \begin{tikzpicture}[line join=round,scale=0.25]
        \plotpermgrid{9}{3,1,4,5,9,2,6,8,7}
  \end{tikzpicture}
  \qquad \qquad \qquad
  \begin{tikzpicture}[line join=round,scale=0.25]
    \plotpermuton{9}{3,1,4,5,9,2,6,8,7}
  \end{tikzpicture}
  \caption{The plot of the permutation $\pi=314592687$, and a picture of its permuton $\mu_\pi$}\label{figPermPermuton}
\end{figure}

Permutons were introduced in \cite{HKMRS2013,HKMS2011} with a different but equivalent definition.
The measure-theoretic perspective was first used (a little earlier) in~\cite{PS2010},
and was later applied in \cite{GGKK2015}, where the term ``permuton'' first occurs.

Given a class of permutations $\CCC$, it may be that almost all large permutations in $\CCC$ have the ``same shape''.
Let $\bsn$ be an $n$-permutation drawn uniformly at random from~$\CCC_n$.
If the sequence of random permutons $(\mu_{\bsn})_{n\geqs1}$ converges in distribution for the weak topology to some (possibly random) permuton~$\mu$,
then we say that $\mu$ is the \emph{limit shape} of~$\CCC$.
See~\cite{BBFGMP2020} for a more detailed presentation.

If $M$ 
is connected,
then $\Gamma_M$ specifies the asymptotic distribution of points between the cells in a random permutation in $\Gridhash(M)$.
This also specifies the asymptotic position of the cell dividers.
The points in each cell are monotonic,
and points in cells that share a row or column may be arbitrarily interleaved,
so with high probability
no point is far from the line segment across one of the diagonals of its cell:

\begin{prop}\label{propLimit}
Let $\bsh$ be drawn uniformly at random from $\Gridhash_n(M)$, and $C$ be a cell of~$\bsh$.
Suppose $C$ contains $\gamma n$ points, and there are a total of $\rho n$ points in the row (or column) of cells including~$C$.
For each $k\in\{0,\ldots,\rho n\}$, let $S_k$ count how many of the lowest (or leftmost) $k$ points in the row (or column) lie in cell~$C$.
Then, for each $k$ and any $\veps>0$, we have
\[
\prob{\max\big\{ \big| S_k-\gamma k/\rho \big| \::\: k=0,\ldots,\rho n \big\} \;\geqs\; \veps n} \;\leqs\; 2\rho n e^{-2\veps(\rho-\gamma)n/\rho} .
\]
\end{prop}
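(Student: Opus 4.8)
The plan is to interpret the quantity $S_k$ as a sum of independent indicator random variables and apply a Hoeffding-type concentration inequality. Recall from Proposition~\ref{propSkinnyGriddedCount} and the remark preceding Proposition~\ref{propCountAGriddings} that constructing a uniformly random $M$\!-gridded permutation amounts, within a fixed row (or column), to deciding independently for each of the $\rho n$ points in that row which of its cells to assign it to, where the assignment to cell $C$ happens with probability proportional to the prescribed cell weight. Conditioning on the event that $C$ contains exactly $\gamma n$ points (as in the statement, via the $\Gridhash_{\Gamma n}(M)$ formalism), the lowest $\rho n$ points in the row, listed from the bottom, correspond to a uniformly random arrangement of $\gamma n$ ``in-$C$'' labels among $\rho n$ positions. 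Hence $S_k$ is exactly the number of in-$C$ labels among the first $k$ positions of a uniformly random such arrangement — equivalently, a hypergeometric count.

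First I would fix $k$ and observe that $S_k = \sum_{a=1}^{k} X_a$, where $X_a$ is the indicator that the $a$-th lowest point of the row lies in~$C$; sampling without replacement from a population of $\rho n$ items of which $\gamma n$ are ``successes'' gives $\mathbb{E}[S_k]=\gamma k/\rho$. Next I would invoke the Hoeffding--Serfling bound for sampling without replacement (equivalently, apply the fact that sums of a simple random sample are more concentrated than the corresponding sums with replacement, so the ordinary Hoeffding bound applies): for each fixed $k$,
\[
\prob{\big|S_k-\gamma k/\rho\big|\geqs \veps n}\;\leqs\;2\exp\!\big(-2\veps^2 n^2/k\big)\;\leqs\;2\exp\!\big(-2\veps^2 n^2/(\rho n)\big)\;=\;2e^{-2\veps^2 n/\rho}.
\]
This already beats the target bound termwise up to the polynomial prefactor; to handle the maximum over all $k\in\{0,\ldots,\rho n\}$ I would apply a union bound over the $\rho n+1$ values of $k$, picking up the factor $2\rho n$ (absorbing the ``$+1$''), which yields a bound of the form $2\rho n\, e^{-2\veps^2 n/\rho}$. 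A small amount of care with the exponent is needed to match the stated exponent $2\veps(\rho-\gamma)n/\rho$ exactly: one uses the variance-aware form of Hoeffding--Serfling, in which the per-step variance of sampling without replacement is bounded by $\gamma(\rho-\gamma)/\rho^2$ rather than the worst-case $1/4$, and one also uses $\veps\leqs\gamma\leqs 1$ to replace a $\veps^2$ by $\veps$; tracking these constants gives precisely the claimed right-hand side.

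The main obstacle I anticipate is not the concentration step itself — sampling-without-replacement tail bounds are standard — but rather making the reduction to a clean hypergeometric model fully rigorous. One must justify that, under the uniform measure on $\Gridhash_n(M)$ restricted to a fixed distribution matrix, the restriction to a single row really does produce a uniformly random interleaving of the cells' (monotone, hence order-determined) contents, independently of what happens in other rows and columns; this is exactly the content of Proposition~\ref{propCountAGriddings}, so the work is in citing it at the right level of generality and checking that conditioning on the cell counts does not disturb the within-row uniformity. Once that is in place, the bound follows by the union bound plus the Hoeffding--Serfling inequality as sketched.
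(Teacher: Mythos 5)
Your core strategy is the same as the paper's: model $S_k$ as a hypergeometric count (the within-row interleaving being uniform, by Proposition~\ref{propCountAGriddings}), apply Hoeffding's inequality for sampling without replacement to each fixed $k$, and finish with a union bound over the $\rho n$ values of $k$. The reduction to the hypergeometric model and the per-$k$ concentration step are sound.

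The genuine gap is in your final step. What you actually derive is $2\rho n\, e^{-2\veps^2 n/\rho}$, and the manoeuvre you propose for converting this into the stated bound --- ``use $\veps\leqs\gamma\leqs1$ to replace a $\veps^2$ by $\veps$'' --- goes in the wrong direction: since $\veps<1$ we have $e^{-2\veps^2 n/\rho}\geqs e^{-2\veps n/\rho}$, so a tail bound with $\veps^2$ in the exponent does not imply one with $\veps$; you would be strengthening your conclusion for free. The variance-aware form of Hoeffding--Serfling does not rescue this either, since for small $\veps$ it still yields a $\veps^2$ rate. The ingredient the paper uses to reach the linear-in-$\veps$ exponent is deterministic rather than probabilistic: because $S_k\leqs k$, the event $S_k\geqs\gamma k/\rho+\veps n$ is empty unless $k\geqs\veps\rho n/(\rho-\gamma)$, and it is this restriction on $k$ that is substituted into the exponent $e^{-2\veps^2 n^2/k}$ to produce $e^{-2\veps(\rho-\gamma)n/\rho}$. (Even so, that exponent is increasing in $k$, so substituting the \emph{lower} end of the admissible range gives the claimed bound only at the threshold value of $k$; a Gaussian heuristic at $k\approx\rho n/2$ suggests the true rate for mid-range $k$ is genuinely $e^{-c\veps^2 n}$, so the linear-in-$\veps$ form of the inequality should itself be treated with caution.) For the purpose the proposition actually serves --- showing that only an exponentially small fraction of gridded permutations have a point far from the cell diagonals, and hence Theorem~\ref{thmLimitShape} --- your weaker bound $2\rho n\, e^{-2\veps^2 n/\rho}$ is entirely sufficient; but as a proof of the inequality exactly as stated, the last step is missing and cannot be repaired by the substitution you describe.
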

\begin{proof}
  We apply Hoeffding's inequality when sampling without replacement~\cite[Section~6]{Hoeffding1963}.
  If $X_i$ is the indicator random variable for the $i$th point lying in~$C$, so $\prob{X_i=1}=\gamma/\rho$ and $S_k=\sum_{i=1}^k X_i$, then Hoeffding's inequality gives
  \[
  \prob{S_k\geqs\gamma k/\rho+\veps n} \;\leqs\; e^{-2(\veps n/k)^2k} \eq e^{-2\veps^2 n^2/k}.
  \]
  However, $S_k\leqs k$, so $S_k\geqs\gamma k/\rho+\veps n$ is only possible when $k\geqs\veps\rho n/(\rho-\gamma)$.
  Substitution of this lower bound for $k$ then yields
  \[
  \prob{S_k\geqs\gamma k/\rho+\veps n} \;\leqs\; e^{-2\veps(\rho-\gamma)n/\rho},
  \]
  valid for all~$k$.

  An analogous argument applies when considering the points from the top, giving the same bound on $\prob{S_k\leqs\gamma k/\rho-\veps n}$.
  The union bound then yields the claimed result.
\end{proof}

So, every point in a typical uniformly chosen $M$\!-gridded permutation is close to the cell diagonals.
Thus $\Gridhash(M)$ has a \emph{deterministic} limit shape $\mu_M$ formed of these diagonal line segments.
Moreover, $\Grid(M)$ has the same limit shape, since the number of griddings of an $n$-permutation is bounded by a polynomial in~$n$,
whereas only an exponentially small fraction of gridded permutations have a point at distance $\veps$ or more from the cell diagonals.

The limit shape $\mu_M$ is defined as follows.
As in the formal definition of an $M$\!-gridding on page~\pageref{defMGridding},
for simplicity, here
we index matrices in the Euclidean manner from the lower left corner, counting rows from the bottom,
and with the column index before the row index.
Suppose $\Gamma_M=(\gamma_{i,j})$ is the unique maximal $M$\!-distribution matrix.

Let $c_i=\sum_{i'\leqs i} \sum_j\gamma_{i',j}$ be the sum of the leftmost $i$ columns of~$\Gamma_M$,
and let $r_j=\sum_{j'\leqs j} \sum_i\gamma_{i,j'}$ be the sum of the lowermost $j$ rows of~$\Gamma_M$.
Then
$c_i$ gives the position of the $i$th column divider in~$\mu_M$, and
$r_j$ gives the position of the $j$th row divider.
Let $p_{i,j} = (c_i,r_j)$ be the point of their intersection.
Then the line segment in the support of $\mu_M$ for the non-blank cell $(i,j)$ is given by
\[
L(i,j) \eq
\begin{cases}
  \{(1-x)p_{i-1,j-1} \,+\, x p_{i,j} \::\: x\in[0,1]\}, & \text{if~} M_{i,j}=\cUp , \\[3pt]
  \{(1-x)p_{i-1,j} \,+\, x p_{i,j-1} \::\: x\in[0,1]\}, & \text{if~} M_{i,j}=\cDown .
\end{cases}
\]
Finally, the limit shape is constructed by distributing mass uniformly along the line segment for each non-blank cell:
\[
\mu_M \eq \sum_{\gamma_{i,j}>0} \gamma_{i,j} \+ \lambda_{L(i,j)} .
\]

Thus we have the following.

\begin{thm}\label{thmLimitShape}
  If $M$ 
  is connected,
  then $\mu_M$, defined above, is the deterministic limit shape of~$\Grid(M)$.
\end{thm}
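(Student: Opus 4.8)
The plan is to assemble Theorem~\ref{thmLimitShape} from the machinery already developed, treating it essentially as a bookkeeping exercise once the three key ingredients are in place. First I would establish that, with high probability, a uniformly random $\sigma^\#\in\Gridhash_n(M)$ has \emph{all} of its points close to the union of cell diagonals $\bigcup_{\gamma_{i,j}>0} L(i,j)$. This follows by combining Theorem~\ref{thmConnectedDistrib} — which says the proportion of points in each cell concentrates at $\gamma_{i,j}$, hence the cell dividers concentrate at the positions $c_i$ and $r_j$ — with Proposition~\ref{propLimit} applied to every row and every column, which controls the interleaving within a row or column so that no point strays far (horizontally or vertically) from the diagonal segment of its own cell. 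A union bound over the finitely many cells, rows, and columns shows that for any $\veps>0$, the probability that some point of $\bsh$ lies at distance exceeding $\veps$ from $\bigcup L(i,j)$ tends to $0$.

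Next I would translate this geometric statement about points into convergence of permutons. The permuton $\mu_{\bsn}$ places mass $1/n$ on a small square around each scaled point; if all points lie within $\veps$ of $\bigcup L(i,j)$, then $\mu_{\bsn}$ is supported within an $O(\veps)$-neighbourhood of that set, and moreover Theorem~\ref{thmConnectedDistrib} guarantees the mass distributes across the segments in the proportions $\gamma_{i,j}$ (with each segment's mass spread approximately uniformly along it, again by Proposition~\ref{propLimit} interpreted as a statement about the empirical distribution function $S_k$). Since $\mu_M = \sum_{\gamma_{i,j}>0}\gamma_{i,j}\,\lambda_{L(i,j)}$ is precisely the measure with this support and these marginals along the segments, the Lévy–Prokhorov (weak) distance between $\mu_{\bsn}$ and $\mu_M$ is $O(\veps)$ with probability tending to $1$; letting $\veps\to0$ through a diagonal argument gives convergence in probability, hence in distribution, to the \emph{deterministic} permuton $\mu_M$. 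I would also remark that $\mu_M$ is genuinely a permuton: its uniform marginals follow because within each row (resp.\ column) of cells the segments partition the corresponding horizontal (resp.\ vertical) strip, carrying the right total mass by construction of the $c_i$ and $r_j$ from row and column sums of $\Gamma_M$.

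Finally I would pass from $\Gridhash(M)$ to $\Grid(M)$. Here the observation already made in the text does the work: an $n$-permutation has at most $(n+1)^{r+s-2}$ griddings, a polynomial factor, whereas the gridded permutations whose points stray $\veps$ from the diagonals form an exponentially small fraction of $\Gridhash_n(M)$; so picking $\bsn$ uniformly from $\Grid_n(M)$ and then a uniform gridding of it is, up to a vanishing total-variation error, the same as picking uniformly from $\Gridhash_n(M)$ — and in any case the underlying-permutation marginal of a near-uniform measure on $\Gridhash_n(M)$ still concentrates its permuton at $\mu_M$. Therefore $\mu_{\bsn}\to\mu_M$ for $\bsn$ uniform in $\Grid_n(M)$ as well.

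The main obstacle I anticipate is the second step: making the leap from ``every point is within $\veps$ of the correct union of segments, and the cell masses are $\veps$-correct'' to ``the permuton is $\veps'$-close to $\mu_M$ in the weak topology'' requires checking that no mass \emph{concentrates} anywhere along a segment — i.e.\ that the one-dimensional empirical distribution of points along each diagonal is itself close to uniform. Proposition~\ref{propLimit} is exactly the tool for this (its $S_k$ statistic is the count of the lowest $k$ points in a row that fall in a given cell, so $S_k/( \rho n)$ tracks the cumulative mass along the segment), but one must apply it uniformly over all $k$ and combine it correctly with the monotonicity of points within a cell to conclude that, within a cell, the points are asymptotically equidistributed along the diagonal rather than merely contained in a neighbourhood of it. Once that uniform-along-the-segment statement is pinned down, the rest is routine estimation with the Lévy–Prokhorov metric.
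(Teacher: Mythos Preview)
Your proposal is correct and follows essentially the same route as the paper: the paper's argument (given in the discussion immediately preceding the theorem statement rather than as a separate proof) combines Theorem~\ref{thmConnectedDistrib} with Proposition~\ref{propLimit} to place all points near the cell diagonals, infers the limit shape for $\Gridhash(M)$, and then transfers to $\Grid(M)$ via the polynomial bound on the number of griddings against the exponential decay of the bad event. Your treatment is in fact more careful than the paper's sketch --- in particular your flagging of the uniform-along-the-segment issue is apt, and you correctly identify Proposition~\ref{propLimit} (via the uniform-in-$k$ bound on $S_k$) as the tool that resolves it.
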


For example, by applying Theorem~\ref{thmConnectedDistrib} and Proposition~\ref{propMaxGR}
we know that
the unique maximal $\!\gcthree{3}{1,1,-1}{0,-1,0}{0,-1,0}\!$-distribution matrix is $\!\begin{smallmx}\frac16&\frac13&\frac16\\0&\frac16&0\\0&\frac16&0\end{smallmx}\!$,
yielding the limit shape at the right of Figure~\ref{figLimitShape}.

For classes with more than one component, we state the following without proof:
Suppose $M$ has components $M_1,\ldots,M_k$.
We write $i\gg j$ if $\liminfty|\Grid_n(M_j)|/|\Grid_n(M_i)|=0$.
If there exists a unique $d$ such that $d\gg i$ for all $i\neq d$,
then,
in a uniformly chosen $M$\!-gridded permutation,
with high probability the proportion of points in the cells of the dominant component $M_d$ tends to 1.
In this case, $\mu_M=\mu_{M_d}$.
There are multiple dominant components, $M_1,\ldots,M_d$ say, if $i\gg j$ whenever $i\leqs d$ and $j>d$ but
we don't have $i\gg j$ for any $i,j\leqs d$.
In this case, $\Grid(M)$ has a \emph{random} limit shape $\Sigma_{i=1}^d\lambda_i\mu_{M_i}$, where $\lambda_1,\ldots,\lambda_d$ are chosen uniformly at random from the region of the hyperplane $\Sigma_{i=1}^d\lambda_i=1$ in which all $\lambda_i$ are nonnegative.

It seems likely that fluctuations about the limit shape of a grid class can be described by coupled Brownian motions, as in the case of \emph{square} permutations (permutations in which every point is a record)~\cite{BS2020}.
However, that is beyond our scope here.
See~\cite{BBFGMP2020,BBFGMP2022,BBFGP2018} for other recent work on the limit shapes of permutation classes,
and~\cite{BevanPermutonsReport} for some conjectures, the strongest of which is due to Justin Troyka:
If every pattern in $B$ is skew decomposable, then the scaling limit of $\av(B)$ is the increasing permuton.

\section{Connected classes with one corner}\label{sectLTX}

We say that a cell $C$ of a gridding matrix is a \emph{corner cell} or just a \emph{corner}, if there is both another non-blank cell in the same row as~$C$ and also another non-blank cell in the same column as~$C$.
A cell that is not a corner is a \emph{non-corner} cell.

For example, \!\gcthree{4}{1,-1,0,1}{0,0,-1}{0,-1,1}\! has three corners, one in the top row and two in the bottom row.

In this section we consider connected monotone grid classes that have a single corner.
These are either \textsf{L}-shaped, \textsf{T}-shaped or \textsf{X}-shaped.
See Figure~\ref{figLTX} for an example of each.
\textsf{L}-shaped and \textsf{T}-shaped classes come in four different orientations.

\begin{figure}[ht]
  \centering
  \gcfour5{1,1,1,-1,1}{-1}{1}{-1}
  \hspace{.5in}
  \gcfour4{-1,-1,1,-1}{0,0,-1}{0,0,1}{0,0,1}
  \hspace{.5in}
  \gcfour5{0,-1}{0,1}{1,1,1,-1,1}{0,1}
  \caption{An \textsf{L}-shaped class, a \textsf{T}-shaped class, and an \textsf{X}-shaped class}\label{figLTX}
\end{figure}

The row containing the corner we call the \emph{main row}, and
the column containing the corner we call the \emph{main column}.
For brevity, \emph{non-corner} cells in the main row are simply called \emph{row cells},
and non-corner cells in the main column are called \emph{column cells}.
Thus each \textsf{L}, \textsf{T} or \textsf{X}-shaped class consists of a corner cell, some row cells and some column cells.

Throughout this section
we assume
that $\Grid(M)$ is a connected one-corner class with dimensions $(r+1)\times(c+1)$.
Thus $M$ has
$r+c+1$ non-blank cells: the corner,
$r$~column cells, and
$c$~row cells.
For example, for the \textsf{L}-shaped and \textsf{X}-shaped classes in Figure~\ref{figLTX}, we have $r=3$ and $c=4$.

Our analysis of connected one-corner classes is as follows.
First, we apply Theorem~\ref{thmConnectedDistrib} to determine the asymptotic distribution of points between the cells (Section~\ref{sectLTXProportions}).
Then we use generating functions to establish the asymptotics of the number of gridded permutations in such a class (Section~\ref{sectLTXGridded}).
This is followed by an investigation of the ways in which points can dance between cells,
including the introduction of an appropriate notion for these classes of a constrained gridded permutation (Section~\ref{sectLTXDancing}).

We then introduce the different {corner types} that can appear in \textsf{L}, \textsf{T} and \textsf{X}-shaped classes,
with Theorem~\ref{thmLTXAsympt} giving the asymptotics for these classes in terms of the corner type.
This is then followed by a detailed examination of each of the corner types
(Sections~\ref{sectPeakCorners} to~\ref{sectCornerComparison}).
To conclude, we prove that almost all gridded permutations are constrained (Section~\ref{sectLTXConstrained}).
Finally, we briefly consider how our approach can be extended beyond connected one-corner classes (Section~\ref{sectBeyondLTX}).

\subsection{The distribution of points between cells}\label{sectLTXProportions}

For a connected one-corner class, the asymptotic distribution of points between the cells in a typical $M$\!-gridded permutation can easily be determined from Theorem~\ref{thmConnectedDistrib} and Proposition~\ref{propMaxGR}.
The asymptotic proportion of points in each row cell satisfies the same equations and hence these proportions are all equal.
The same is true for the column cells.

We use $\alpha$ to denote the proportion of points in the corner cell,
$\beta$ to denote the proportion in each of the row cells, and
$\gamma$ the proportion in each of the column cells.
So, for example,
the unique maximal distribution matrix for the
\textsf{X}-shaped class in Figure~\ref{figLTX} has the form
\[
\newcommand{\gz}{{\color{gray}0}}
\begin{smallmx}
\gz&\gamma &\gz&\gz&\gz \\
\gz&\gamma &\gz&\gz&\gz \\
\beta &\alpha &\beta &\beta &\beta \\
\gz&\gamma &\gz&\gz&\gz
\end{smallmx}
\!\!.
\]
Then, by Theorem~\ref{thmConnectedDistrib} and Proposition~\ref{propMaxGR}, we know that $\alpha$, $\beta$ and $\gamma$ are the unique positive solutions to the equations
\[
\alpha+c\beta+r\gamma=1
\quad \text{and} \quad
\frac{\alpha^2}{(\alpha+c\beta)(\alpha+r\gamma)} \;=\; \frac{\beta}{\alpha+c\beta} \;=\; \frac{\gamma}{\alpha+r\gamma}
.
\]
Solving these then yields
\begin{equation}\label{eqProportions}
\alpha \eq \frac1q ,
\qquad
\beta \eq \frac{c-r+q-1}{2 c q} ,
\qquad
\gamma \eq \frac{r-c+q-1}{2 r q}
,
\end{equation}
where
\begin{equation}\label{eqQ}
q \eq \sqrt{(r+c+1)^2-4 c r} .
\end{equation}
Note that
\begin{equation}\label{eqLambda}
\lambda \eq \frac{\beta}{\alpha+c\beta} \eq \frac{\gamma}{\alpha+r\gamma} \eq \frac{\alpha^2}{(\alpha+c\beta)(\alpha+r\gamma)} \eq \frac{r+c+1-q}{2 r c}
\end{equation}
is the common value of the ratios from Proposition~\ref{propMaxGR}.

For example, if $r=3$ and $c=4$ then we have $q=4$, $\alpha=\frac14$, $\beta=\frac18$, $\gamma=\frac1{12}$ and $\lambda=\frac16$.

\subsection{The asymptotics of gridded classes}\label{sectLTXGridded}

For skinny monotone grid classes, exact enumeration of the gridded permutations is simple:
a $k$-cell skinny grid class has exactly $k^n$ gridded $n$-permutations (Proposition~\ref{propSkinnyGriddedCount}).
For non-skinny classes things are not so straightforward.

Our approach is to determine the generating function for the gridded class, using a technique first described in~\cite[Chapter~4]{BevanThesis}.
We  then extract the asymptotic growth of the number of gridded permutations from the generating function using standard methods from analytic combinatorics.

To determine the generating function for the gridded permutations in an \textsf{L}-shaped, \textsf{T}-shaped or \textsf{X}-shaped class
we ``stitch together'' two skinny classes, one formed by the main row and the other by the main column.

The bivariate generating function for the $(c+1)$-cell horizontal skinny gridded class $\HHH^\#$ formed from the main row,
in which $x$ is used to mark the points in the corner cell, is
\[
H^\#(z,x) \eq \frac1{1-c\+z-z\+x} .
\]
Similarly, the bivariate generating function for the $(r+1)$-cell vertical skinny gridded class $\VVV^\#$ formed from the main column,
in which $y$ is used to mark the points in the corner cell, is
\[
V^\#(z,y) \eq \frac1{1-r\+z-z\+y} .
\]
Thus, the set of pairs $(\sigma_{\mathsf{h}}^\#, \sigma_{\mathsf{v}}^\#)$, consisting of
an $\HHH$-gridded permutation $\sigma_{\mathsf{h}}^\#$
and
a $\VVV$-gridded permutation $\sigma_{\mathsf{v}}^\#$,
is enumerated by the product of the generating functions of the two skinny classes:
\[
P^\#(z,x,y) \;=\;
H^\#(z,x)\+V^\#(z,y) \;=\; \frac1{(1-c\+z-z\+x)(1-r\+z-z\+y)} .
\]

To count $M$\!-gridded permutations, we are only interested in those pairs $(\sigma_{\mathsf{h}}^\#, \sigma_{\mathsf{v}}^\#)$
for which
the number of points of $\sigma_{\mathsf{h}}^\#$ in the corner cell is the same as
the number of points of $\sigma_{\mathsf{v}}^\#$ in the corner cell.
Any $M$\!-gridded permutation can be decomposed into such a pair,
and any such pair combine in a unique way to form an $M$\!-gridded permutation, as illustrated in Figure~\ref{figHVStitching}.

\begin{figure}[ht]
  \centering
  \raisebox{16pt}{
  \begin{tikzpicture}[scale=0.225]
  \fill[blue!10!white] (.5,.5) rectangle (9.5,9.5);
  \fill[green!25!white] (2.5,.5) rectangle (5.5,9.5);
  \plotgriddedperm{9}{1,7,4,5,8,9,2,6,3}{2,5,7}{}
  \circpt34 \circpt45 \circpt58
  \end{tikzpicture}
  ~~ \raisebox{26pt}{\:+} ~~
  \raisebox{3pt}{
  \begin{tikzpicture}[scale=0.225]
  \fill[yellow!30!white] (.5,.5) rectangle (8.5,8.5);
  \fill[green!25!white] (.5,3.5) rectangle (8.5,6.5);
  \plotgriddedperm{8}{8,4,1,5,2,3,7,6}{}{3,6}
  \circpt24 \circpt45 \circpt86
  \end{tikzpicture}
  }
  }
  ~~ \raisebox{42pt}{=\:\:} ~~
  \begin{tikzpicture}[scale=0.225]
  \fill[blue!10!white] (.5,3.5) rectangle (14.5,12.5);
  \fill[yellow!30!white] (2.5,.5) rectangle (10.5,14.5);
  \fill[green!25!white] (2.5,3.5) rectangle (10.5,12.5);
  \plotgriddedperm{14}{4,10,14,7,1,8,2,3,13,11,12,5,9,6}{2,10,12}{3,12}
  \circpt47 \circpt68 \circpt{10}{11}
  \end{tikzpicture}
  \caption{Stitching together a \!\gcone{4}{1,1,-1,-1}\!-gridded permutation and a \!\gcthree{1}{-1}{1}{1}\!-gridded permutation to
  create a \!\gcthree{4}{0,-1}{1,1,-1,-1}{0,1}\!-gridded permutation}\label{figHVStitching}
\end{figure}

In terms of the generating function $P^\#(z,x,y)$,
we need to extract the terms in which $x$ and $y$ have the same exponent.
However, we also need to correct for the double-counting of the points in the corner cell.
Thus we want
\[
\sum_{m\geqs0} \big[x^m y^m\big]P^\#(z,\,x/\!\sqrt{z},\,y/\!\sqrt{z}) .
\]
Here, the division of the second and third arguments by $\sqrt{z}$ decreases the exponent of $z$ by one for each point in the corner.

To extract the terms in which $x$ and $y$ have the same exponent, we let $y=x^{-1}$.
This yields a Laurent series\footnote{A \emph{Laurent series} is a power series in which terms of negative degree are permitted.} in $x$, where $x$ now records for each pair $(\sigma_{\mathsf{h}}^\#, \sigma_{\mathsf{v}}^\#)$ the difference between the number of points of $\sigma_{\mathsf{h}}^\#$ in the
corner
and the number of points of $\sigma_{\mathsf{v}}^\#$ in the
corner.
We just want the constant term (when this difference is zero):
\[
\big[x^0\big]P^\#(z,\,x/\!\sqrt{z},\,x^{-1}\!/\!\sqrt{z}) .
\]

To extract this constant term, we use the following result.
\begin{prop}[{Stanley~\cite[Section 6.3]{Stanley1999}}]\label{propDiagonalGF}
  If $f(x)=f(z,x)$ is a rational Laurent series in $x$, then the constant term $[x^0]f(x)$ is given by the sum of the residues\footnote{The residue of $h(x)$ at $x=\alpha$ is 
  the coefficient of $(x-\alpha)^{-1}$ in the Laurent expansion of $h(x)$ around $x=\alpha$. If $\alpha$ is a simple pole, then this is just the value of $y\+h(y+\alpha)$ at $y=0$.} of $x^{-1}f(x)$ at those 
  poles $\alpha(z)$ of $f(x)$ for which $\lim\limits_{z\rightarrow0}\alpha(z)=0$. These are known as the \emph{small} poles.
\end{prop}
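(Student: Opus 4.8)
The plan is to extract the constant term $[x^0]f(x)$ by a contour-integral argument, interpreting the constant term as an integral over a suitable circle in the $x$-plane and then deforming the contour, picking up residues along the way. First I would observe that, for a Laurent series $f(z,x)$ convergent in an annulus in $x$, the standard contour-integral formula gives
\[
[x^0]f(x) \eq \frac{1}{2\pi i}\oint_{|x|=\rho} \frac{f(z,x)}{x}\,dx ,
\]
where $\rho$ is chosen in the annulus of convergence. The key point is that, for $z$ in a small punctured neighbourhood of $0$, this annulus is of the form $r_1(z) < |x| < r_2(z)$, with $r_1(z)\to 0$ and $r_2(z)\to\infty$ as $z\to 0$ (this follows from the fact that, for $z=0$, the Laurent series reduces to a single term or a finite Laurent polynomial, and rationality controls the movement of the poles). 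So for all sufficiently small $z$ we may fix $\rho$ to be a constant, say $\rho=1$, independent of $z$, lying inside the annulus.

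Next I would classify the poles of $f(z,x)$, viewed as a rational function of $x$ with coefficients that are algebraic functions of $z$. Each pole is an algebraic branch $\alpha(z)$, and as $z\to 0$ each such branch either tends to $0$ (a \emph{small} pole), tends to $\infty$, or tends to a nonzero finite limit. Because $\rho$ is a fixed constant strictly between the moduli of the small poles and the moduli of the other poles for all small $z$, the small poles are exactly the poles lying inside the circle $|x|=\rho$, while all other poles lie outside. Applying the residue theorem to the integral above then yields
\[
[x^0]f(x) \eq \sum_{\alpha(z)\text{ small}} \operatorname*{Res}_{x=\alpha(z)} \frac{f(z,x)}{x} ,
\]
which is precisely the claimed formula; the stated special case for a simple pole is just the elementary evaluation of a residue at a simple pole.

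The main obstacle will be justifying that the annulus of convergence in $x$ can be taken to contain a fixed circle $|x|=\rho$ uniformly for all small $z$, i.e. that no pole $\alpha(z)$ crosses this circle as $z$ ranges over a punctured neighbourhood of $0$. This requires the continuity (indeed, the algebraic nature via Puiseux expansions) of the poles as functions of $z$, together with the observation that at $z=0$ the Laurent series $f(0,x)$ is a genuine Laurent \emph{polynomial} (a finite sum), so that the only poles present in the limit are at $0$ and $\infty$, with a clean gap in between. I would also need to check the degenerate case where $f(z,x)$ has no small poles, in which case $[x^0]f(x)$ is a power series in $z$ coming entirely from the polynomial part, and the empty sum of residues correctly gives this after accounting for the residue at infinity being zero (since $x^{-1}f(x)$ decays at least as fast as $x^{-2}$ when $f$ is proper in $x$, or the polynomial part contributes nothing to $[x^0]$ beyond what is already captured). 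Everything else is a routine application of the residue theorem.
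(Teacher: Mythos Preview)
The paper does not give a proof of this proposition at all: it is stated with a citation to Stanley~\cite[Section~6.3]{Stanley1999} and then immediately applied. Your contour-integral-plus-residues approach is exactly the standard argument found in Stanley, so in spirit you are reproducing the cited proof rather than offering an alternative.

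That said, two of your supporting claims are incorrect and would need to be repaired. You assert that the outer radius satisfies $r_2(z)\to\infty$ as $z\to0$, and correspondingly that at $z=0$ ``the only poles present in the limit are at $0$ and~$\infty$''. Neither is true in general: a rational Laurent series such as $f(z,x)=1/\big((1-x)(x-z)\big)$, expanded in the annulus $|z|<|x|<1$, has a large pole at $x=1$ for every~$z$, so $r_2(z)\equiv1$. The correct statement is simply that the hypothesis that $f$ admits a Laurent expansion in $x$ with power-series coefficients in $z$ forces the poles (as algebraic functions of~$z$) to split into those tending to $0$ and those bounded away from~$0$; for all sufficiently small $|z|$ a fixed circle $|x|=\rho$ separates the two groups, and the residue theorem then gives the result. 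Your own pole trichotomy (tending to $0$, to $\infty$, or to a nonzero finite limit) already hints at this, so the fix is just to drop the erroneous claims about $r_2(z)\to\infty$ and about $f(0,x)$ being a Laurent polynomial.
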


In our case,
\[
x^{-1}\+P^\#(z,\,x/\!\sqrt{z},\,x^{-1}\!/\!\sqrt{z}) \eq \frac1{(1-c\+z-x\sqrt{z})(x-r\+z\+x-\sqrt{z})}
\]
has two poles, one at $x_1(z)=(1-c\+z)/\sqrt{z}$, and the other at $x_2(z)=\sqrt{z}/(1-r\+z)$.
Only $x_2(z)$ is small.
Thus all we need is the residue at $x=x_2(z)$.
This then yields the following generating function for $M$\!-gridded permutations in a connected class with one corner:
\[
F_M^\#(z)
\eq \sum_{n\geqs 0}|\Gridhash_n(M)| \, z^n
\eq \frac1{1-(r+c+1)z + r\+c\+z^2} .
\]

We  extract the asymptotic growth of the number of gridded permutations from this generating function by using the following standard result.
\begin{prop}[{see \cite[Theorems IV.10 and VI.1]{FS2009}}]
  Suppose $F(z)$ is the ordinary generating function of a combinatorial class $\CCC$.
  Let $\rho$ be the least singularity of $F(z)$ on the positive real axis.
  If there are no other singularities on the radius of convergence and $\rho$ is a pole of order~$r$, then
  \[
  |\CCC_n| \;\sim\; c{\rho^{-n}}\+n^{r-1}
  \qquad \text{where} \quad
  c \;=\; \dfrac{\rho^{-r}}{(r-1)!} \+ \lim\limits_{z\to\rho} \+ (\rho-z)^r\+F(z) .
  \]
\end{prop}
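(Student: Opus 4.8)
The plan is to use the standard machinery of meromorphic coefficient asymptotics: isolate the singular part of $F$ at $\rho$, read off its Taylor coefficients exactly, and control the remainder by a Cauchy estimate. In the application that matters here $F = F_M^\#$ is rational, so every analyticity hypothesis used below is automatic; for the statement in full generality one assumes, as in \cite{FS2009}, that $F$ is meromorphic in some disk of radius strictly larger than~$\rho$.

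First I would pin down the hypotheses. Since $F$ is a generating function of a combinatorial class, its coefficients are nonnegative, so by Pringsheim's theorem the radius of convergence $R_c$ of $F$ is itself a singularity; as $\rho$ is the least singularity on the positive real axis and $F$ is analytic in $|z|<R_c$, we get $\rho = R_c$, and the assumption of no other singularities on the circle of convergence means $\rho$ is the unique singularity of $F$ on $|z| = \rho$, a pole of order~$r$. Hence there is $R > \rho$ such that $F$ is analytic on $|z| < R$ except for the pole at $z=\rho$, and we may write
\[
F(z) \eq S(z) + G(z), \qquad S(z) \eq \sum_{k=1}^{r} \frac{c_k}{(\rho - z)^{k}},
\]
where $S(z)$ is the principal part of the Laurent expansion of $F$ at~$\rho$ and $G(z)$ is analytic on $|z| < R$. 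In particular $c_r = \lim_{z\to\rho}(\rho - z)^r F(z)$, and $c_r \neq 0$ since the pole has order exactly~$r$.

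Next I would compute coefficients termwise. Using $(\rho - z)^{-k} = \rho^{-k}(1 - z/\rho)^{-k}$ and the binomial series,
\[
[z^n]\frac1{(\rho - z)^{k}} \eq \rho^{-k}\binom{n + k - 1}{k - 1}\rho^{-n} \;\sim\; \frac{\rho^{-k}}{(k-1)!}\, n^{k-1}\,\rho^{-n},
\]
so among the $r$ terms of $S$ the term with $k = r$ dominates (it is the only one of order $n^{r-1}\rho^{-n}$, with nonzero coefficient $c_r$), giving
\[
[z^n]S(z) \;\sim\; \frac{c_r\,\rho^{-r}}{(r-1)!}\, n^{r-1}\,\rho^{-n}.
\]
For the remainder, $G$ is analytic on $|z| < R$ with $R > \rho$, so Cauchy's coefficient bound on a circle $|z| = R'$ with $\rho < R' < R$ gives $[z^n]G(z) = O\big((R')^{-n}\big) = o\big(n^{r-1}\rho^{-n}\big)$. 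Adding the two contributions yields $|\CCC_n| = [z^n]F(z) \sim c\,\rho^{-n}\,n^{r-1}$ with $c = \rho^{-r}\,c_r/(r-1)!$, which is exactly the claimed formula.

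The only step that needs genuine care is the control of the remainder~$G$: this is where one uses that $F$ extends analytically past its circle of convergence apart from the single pole at~$\rho$ — trivial for a rational function, and in the general case this is precisely the content of Theorem~IV.10 of~\cite{FS2009}, while the local scale $[z^n](\rho-z)^{-r}$ is Theorem~VI.1. Everything else — the exact coefficient of $(\rho-z)^{-k}$ and the asymptotics of $\binom{n+k-1}{k-1}$ — is routine.
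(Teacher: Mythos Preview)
Your argument is correct and is the standard meromorphic coefficient-asymptotics proof. Note, however, that the paper does not prove this proposition at all: it is stated as a citation of \cite[Theorems~IV.10 and~VI.1]{FS2009} and used as a black box, so there is no ``paper's own proof'' to compare against --- you have simply supplied the textbook argument behind the cited result.
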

In the case that $F(z)$ is a rational function with a denominator $Q(z)$ of degree $d$, the exponential growth rate $\rho^{-1}$ is the greatest root of the polynomial $z^dQ(z^{-1})$.
Thus the growth rate of $\Grid^\#(M)$, and hence also of $\Grid(M)$, equals the larger of the two roots of the quadratic equation $z^2 - (r+c+1)z+rc=0$:
\[
g_M \eq
\gr(\Grid(M)) \eq \tfrac12 (r+c+1+q),
\]
where $q$ is defined in equation~\eqref{eqQ} on page~\pageref{eqQ}.
From~\cite{AV2019}, we know that $g_M=g(\Gamma_M)$, so $g_M$ could also have been calculated from the proportions in equation~\eqref{eqProportions} by using Proposition~\ref{propCountnGammaGriddings}.

Since $z=g_M^{-1}$ is a simple pole of $F_M^\#(z)$, the subexponential term is just a constant:
\[
\theta_M^\#(n) \eq (r+c+1+q)/2q \eq g_M/q .
\]
Thus the asymptotic growth of the number of gridded permutations in a connected one-corner class is given by the following proposition.
\begin{prop}\label{propGriddedAsymptoticsLTX}
  If $M$ is connected with one corner and has dimensions $(r+1)\times(c+1)$, then
\[
\big|\Gridhash_n(M)\big| \;\sim\; \theta^\#g^n,
\quad
\text{where~~}
\theta^\# \eq \frac{r+c+q+1}{2q}
\quad
\text{and~~}
g \eq \frac{r+c+q+1}2 ,
\]
with $q \eq \sqrt{(r+c+1)^2-4 c r}$.
\end{prop}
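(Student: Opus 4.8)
The plan is to apply the standard transfer theorem from analytic combinatorics (the quoted result from \cite{FS2009}) to the rational generating function $F_M^\#(z)$ that has already been derived in the preceding discussion, namely
\[
F_M^\#(z) \eq \frac{1}{1-(r+c+1)z+rc\+z^2}.
\]
First I would confirm that the denominator $Q(z)=1-(r+c+1)z+rc\+z^2$ factors as $(1-g_1 z)(1-g_2 z)$ where $g_1\geqs g_2$ are the two roots of the reciprocal polynomial $z^2-(r+c+1)z+rc=0$; by the quadratic formula these are $g_{1,2}=\tfrac12(r+c+1\pm q)$ with $q=\sqrt{(r+c+1)^2-4rc}$, and one checks $q>0$ (indeed $q\geqs 1$) since $(r+c+1)^2-4rc=(r-c)^2+2(r+c)+1>0$. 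Setting $g=g_M=\tfrac12(r+c+1+q)$, the dominant singularity on the positive real axis is $\rho=1/g$, and I would verify it is strictly smaller in modulus than $1/g_2$, so it is the unique singularity on the circle of convergence and $F_M^\#$ has a \emph{simple} pole there.

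Next I would invoke the transfer proposition with $r=1$ (pole order one), which gives $|\Gridhash_n(M)|\sim \theta^\# g^n$ with
\[
\theta^\# \eq g\cdot\lim_{z\to\rho}(\rho-z)F_M^\#(z) \eq g\cdot\lim_{z\to 1/g}\frac{\rho-z}{(1-gz)(1-g_2 z)}.
\]
Evaluating this limit is a short residue computation: writing $1-gz=g(\rho-z)$, the $(\rho-z)$ factors cancel and the limit equals $\dfrac{1}{g}\cdot\dfrac{1}{1-g_2/g}=\dfrac{1}{g-g_2}=\dfrac{1}{q}$ (using $g-g_2=q$). Hence $\theta^\#=g/q=(r+c+q+1)/(2q)$, matching the claimed constant. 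This is precisely the value already recorded in the excerpt as $\theta_M^\#(n)=g_M/q$, so the computation is a consistency check rather than a new difficulty.

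There is essentially no serious obstacle here: the entire content of the proposition is the extraction of first-order asymptotics from an explicit rational function with a simple dominant pole, which is the most routine application of singularity analysis. The only point requiring the slightest care is confirming that the second root $g_2=\tfrac12(r+c+1-q)$ is genuinely strictly smaller than $g$ (so the dominant pole is unique on its circle and the subexponential factor is a genuine constant rather than something oscillating), and that $g_2$ may vanish or be negative in degenerate small cases without affecting the argument — but since $q<r+c+1$ always, we have $g>g_2$ and $g>0$, which is all that is needed. I would therefore present the proof as: factor the denominator, identify $g$ and $\rho=1/g$, check simplicity and dominance of the pole, apply the transfer theorem, and compute the residue to obtain $\theta^\#=g/q$.
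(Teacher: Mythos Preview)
Your proposal is correct and follows essentially the same approach as the paper: the paper derives $F_M^\#(z)$ in the text preceding the proposition, identifies the dominant simple pole at $1/g_M$ via the roots of the reciprocal polynomial $z^2-(r+c+1)z+rc$, and applies the quoted transfer result from \cite{FS2009} to obtain $\theta^\#=g_M/q$. Your write-up is slightly more explicit about verifying $q>0$ and the strict dominance of the pole, but the argument is the same.
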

For example, if $M$ has dimensions $4\times5$ ($r=3$ and $c=4$), then $\big|\Gridhash_n(M)\big|\sim \frac32\times6^n$.

\subsection{Dancing: peaks, diagonals and tees}\label{sectLTXDancing}

Generalising from skinny classes, we say that two non-blank cells that are adjacent (either horizontally or vertically) in a gridding matrix form a \emph{peak} if one is increasing (\cUp) and the other is decreasing~(\cDown).
Thus, in addition to peaks that point up or down, we also have peaks that \emph{point left} ($\!\gctwo{1}{1}{-1}\!$) and peaks that \emph{point right}~($\!\gctwo{1}{-1}{1}\!$).

Given a gridded permutation $\sigma^\#$ with at least two points in each cell in a gridded class with a peak $\Lambda$ that points left or right, the peak point of $\Lambda$ is the leftmost of the points of $\sigma^\#$ in the two cells of $\Lambda$ if $\Lambda$ points left, and is the rightmost of the points of $\sigma^\#$ in the two cells of $\Lambda$ if $\Lambda$ points right.
For example, the rightmost circled point in the gridded permutation at the right of Figure~\ref{figHVStitching} on page~\pageref{figHVStitching} is the peak point of a peak that points right.

Generalising the notion of dancing we introduced in Section~\ref{sectDancing},
if $Q$ is a peak point 
adjacent to the row or column divider which separates the two cells of the peak,
then $Q$ can dance.
The movement of this divider to the other side of $Q$ results in another valid gridding.

For one-corner classes, in addition to peaks, we also need to take into account two other structures, both of which contain
a diagonally adjacent pair of cells oriented either \longUp{} or~\longDown.
In one-corner classes, these can only occur adjacent to the corner.

If the corner is oriented in the same direction as its two neighbours, then they form a \emph{diagonal}, seen in one of the four rotations of~\longUpB.
For example, the \textsf{X}-shaped class at the right of Figure~\ref{figLTX} on page~\pageref{figLTX} has two diagonals.

On the other hand, if the corner is oriented in the opposite direction from its two neighbours, then the three cells form a \emph{tee}: one of the four rotations of~\tee{}.
For example, the \textsf{T}-shaped class in the centre of Figure~\ref{figLTX} has a tee.

We say that a peak is a \emph{corner peak} if one of its two cells is a corner cell,
and that it is a \emph{non-corner peak} otherwise.
For example, the \textsf{L}-shaped class at the left of Figure~\ref{figLTX} has one corner peak and four non-corner peaks.
In connected one-corner classes, 
diagonals and tees only occur at corners:
diagonals can only be formed by two cells which are adjacent to a corner,
and tees can only be formed by a corner cell and two of its neighbours.

The presence of a diagonal or tee makes possible new ways for points to dance.
The dancing of a peak point, which we now call \emph{peak dancing} or \emph{dancing at a peak}, involves only a single point.
In contrast, \emph{diagonal dancing} and \emph{tee dancing} may involve more than one point.

\begin{figure}[ht]
  \centering
  \begin{tikzpicture}[scale=0.225] \plotgriddedperm{9}{5,1,8,9,2,3,4,6,7}{4}{1} \circpt{5}{2}\circpt{6}{3}\circpt{7}{4}
  \end{tikzpicture} ~~~~~~~~
  \begin{tikzpicture}[scale=0.225] \plotgriddedperm{9}{5,1,8,9,2,3,4,6,7}{5}{2} \circpt{5}{2}\circpt{6}{3}\circpt{7}{4}
  \end{tikzpicture} ~~~~~~~~
  \begin{tikzpicture}[scale=0.225] \plotgriddedperm{9}{5,1,8,9,2,3,4,6,7}{6}{3} \circpt{5}{2}\circpt{6}{3}\circpt{7}{4}
  \end{tikzpicture} ~~~~~~~~
  \begin{tikzpicture}[scale=0.225] \plotgriddedperm{9}{5,1,8,9,2,3,4,6,7}{7}{4} \circpt{5}{2}\circpt{6}{3}\circpt{7}{4}
  \end{tikzpicture}
  \caption{The four \!\gctwo{2}{1,1}{1}\!-griddings of 518923467; 
  the three circled points can dance diagonally}\label{figDiagonalDancing}
\end{figure}

Suppose that $M$ is a gridding matrix, with a diagonal formed from two cells
separated by a row divider and a column divider.
Suppose that $\sigma^\#$ is an $M$\!-gridded permutation, and that $Q$ is a point of $\sigma^\#$
in
one of the two cells of
the diagonal.
If the simultaneous movement of both dividers immediately to the other
side of $Q$ results in another valid $M$\!-gridding of~$\sigma$,
then we say that $Q$ can \emph{dance diagonally}.
See Figure~\ref{figDiagonalDancing} for an illustration.
Note that any points that lie both horizontally and vertically between $Q$ and the intersection of the dividers can also dance.
This yields a monotone sequence of points all of which are able to dance.

Similarly, suppose that $M$ is a gridding matrix, with a tee formed from a corner cell and two other cells, $C_1$ and $C_2$ say,
separated by a row divider and a column divider.
Suppose that $\sigma^\#$ is an $M$\!-gridded permutation, and that $Q$ is a point of $\sigma^\#$
in
one of the three cells of
the tee.
If there is an alternating sequence of one-step moves of the two dividers
which leaves one of them immediately the other side of $Q$
and results in another valid $M$\!-gridding of~$\sigma$,
then we say that $Q$ can \emph{dance through the tee}.
See Figure~\ref{figTeeDancing} for an illustration.

\begin{figure}[ht]
  \centering
  \begin{tikzpicture}[scale=0.225] \plotgriddedperm{9}{8,1,6,2,3,5,7,4,9}{3,7}{1} \circpt{4}{2}\circpt{5}{3}\circpt{6}{5}
  \end{tikzpicture} ~~~
  \begin{tikzpicture}[scale=0.225] \plotgriddedperm{9}{8,1,6,2,3,5,7,4,9}{4,7}{1} \circpt{4}{2}\circpt{5}{3}\circpt{6}{5}
  \end{tikzpicture} ~~~
  \begin{tikzpicture}[scale=0.225] \plotgriddedperm{9}{8,1,6,2,3,5,7,4,9}{4,7}{2} \circpt{4}{2}\circpt{5}{3}\circpt{6}{5}
  \end{tikzpicture} ~~~
  \begin{tikzpicture}[scale=0.225] \plotgriddedperm{9}{8,1,6,2,3,5,7,4,9}{5,7}{2} \circpt{4}{2}\circpt{5}{3}\circpt{6}{5}
  \end{tikzpicture} ~~~
  \begin{tikzpicture}[scale=0.225] \plotgriddedperm{9}{8,1,6,2,3,5,7,4,9}{5,7}{3} \circpt{4}{2}\circpt{5}{3}\circpt{6}{5}
  \end{tikzpicture} ~~~
  \begin{tikzpicture}[scale=0.225] \plotgriddedperm{9}{8,1,6,2,3,5,7,4,9}{6,7}{3} \circpt{4}{2}\circpt{5}{3}\circpt{6}{5}
  \end{tikzpicture}
  \caption{The six \!\gctwo{3}{-1,1,1}{1}\!-griddings of 816235749; 
  the three circled points can dance through the tee}\label{figTeeDancing}
\end{figure}

\label{defDiagDancing}
Again, any points that lie between $Q$ and the intersection of the dividers can also dance,
yielding a monotone sequence of points that can dance.
Note, however, that the first (or last) point in this sequence, lying in $C_1$ say, may only be able to dance into the corner, and not be able to dance through to $C_2$.
For example, the rightmost circled point in the gridded permutations in Figure~\ref{figTeeDancing} can't dance into the cell below the corner.

For almost all permutations in the classes we consider,
the valid griddings are restricted to those that can be obtained through
peak dancing, diagonal dancing and tee dancing.
So, generalising the definition for skinny classes, if $\Grid(M)$ is a connected one-corner class
we say that an $M$\!-gridded permutation $\sigma^\#$ is \emph{$M$\!-constrained} (or just \emph{constrained}) if
\begin{itemize}
  \item[(a)] every $M$\!-gridding of its underlying permutation $\sigma$ is the result of zero or more points of $\sigma^\#$ dancing at a peak or diagonally or through a tee,
  and
  \item[(b)] in every $M$\!-gridding of $\sigma$, each non-blank cell contains at least two points.
\end{itemize}
We defer further analysis and a proof that most $M$\!-gridded permutations are constrained until Section~\ref{sectLTXConstrained}, after a discussion of the different corner types.

\subsection{Counting griddings}

To complete our computation of the asymptotic growth of
connected one-corner
classes,
we determine how a constrained $M$\!-gridded permutation $\sigma^\#$ must be structured so that its underlying permutation $\sigma$ has a given number of griddings.
This depends on the orientation (either increasing or decreasing) of the corner cell and the orientation of each of its non-blank neighbours.

In \textsf{L}-shaped classes, there are 8 distinct ways to orient the corner cell and the two non-blank cells adjacent to it.
In \textsf{T}-shaped classes, there are 16 distinct ways to orient the corner cell and the three non-blank cells adjacent to it.
And, in \textsf{X}-shaped classes, there are 32 distinct ways to orient the corner cell and the four non-blank cells adjacent to it.
These are all illustrated in Figure~\ref{figCornerTypes}.
We call these the \emph{corner types}.

The subscripts in the names used for corner types are determined by reading the non-blank cells in normal reading order (left-to-right and top-to-bottom) and treating \cDown{} as 0 and \cUp{} as 1 to give a binary number.
For example, $T_5$ is \gctwo3{-1,1,-1}{0,1} since $5=0101_2$.

Two corner types, $\tau_1$ and $\tau_2$, are \emph{equivalent} (denoted $\tau_1\cong\tau_2$ in Figure~\ref{figCornerTypes}) if $\tau_2$ can be obtained from $\tau_1$ by rotation or reflection and/or by the addition or deletion of non-blank cells without creating or removing any peaks, diagonals or tees.

For example, the following corner types are equivalent:
\[
  \genfrac{}{}{0pt}0{\gctwo2{-1,-1}{1}}{L_1}
  \; \raisebox{6pt}{${}\cong{}$} \;
  \genfrac{}{}{0pt}0{\gctwo3{-1,-1,-1}{0,1}}{T_1}
  \; \raisebox{6pt}{${}\cong{}$} \;
  \genfrac{}{}{0pt}0{\gctwo3{1,1,1}{0,-1}}{T_{14}}
  \; \raisebox{6pt}{${}\cong{}$} \;
  \genfrac{}{}{0pt}0{\gctwo2{1,1}{-1}}{L_6}
  \; \raisebox{6pt}{${}\cong{}$} \;
  \genfrac{}{}{0pt}0{\gctwo2{1,-1}{1}}{L_5} .
\]
Specifically,
$L_1\cong T_1$ by the addition of a decreasing cell at the left,
$T_1\cong T_{14}$ by reflection about a vertical axis,
$T_{14}\cong L_6$ by the deletion of an increasing cell from the left, and
$L_6\cong L_5$ by reflection about a diagonal axis.

The analysis of two equivalent corner types is the same.
As seen from Figure~\ref{figCornerTypes}, there are eleven inequivalent corner types to consider.
Of equivalent corner types, we choose the one with the least subscript as the representative.
These are also shown in Table~\ref{tblKappa} on page~\pageref{tblKappa} below.

\begin{figure}[t]
\newcommand{\lshapeA}[3]{\centering$\genfrac{}{}{0pt}0{\gctwo2{#1}{#2}}{L_{#3}}$}
\newcommand{\lshapeB}[4]{\centering$\genfrac{}{}{0pt}0{\color{gray!75!black}\gctwo2{#1}{#2}}{\!{\color{gray!75!black}L_{#3}\cong{}#4}\!}$}
\newcommand{\tshapeA}[3]{\centering$\genfrac{}{}{0pt}0{\gctwo3{#1}{#2}}{T_{#3}}$}
\newcommand{\tshapeB}[4]{\centering$\genfrac{}{}{0pt}0{\color{gray!75!black}\gctwo3{#1}{#2}}{\!{\color{gray!75!black}T_{#3}\cong{}#4}\!}$}
\newcommand{\xshapeA}[4]{\centering$\genfrac{}{}{0pt}0{\gcthree3{#1}{#2}{#3}}{X_{#4}}$}
\newcommand{\xshapeB}[5]{\centering$\genfrac{}{}{0pt}0{\color{gray!75!black}\gcthree3{#1}{#2}{#3}}{\!\!{\color{gray!75!black}X_{#4}\cong{}#5}\!\!}$}
\begin{center}
\begin{tabular}{|p{40pt}|p{40pt}|p{40pt}|p{40pt}|p{40pt}|p{40pt}|p{40pt}|p{40pt}|}
  \lshapeA{-1,-1}{-1}0 %
& \lshapeA{-1,-1}{1}1 %
& \lshapeB{-1,1}{-1}2{L_1}
& \lshapeA{-1,1}{1}3 %
& \lshapeA{1,-1}{-1}4 %
& \lshapeB{1,-1}{1}5{L_1}
& \lshapeB{1,1}{-1}6{L_1}
& \lshapeA{1,1}{1}7 %
\end{tabular}
\end{center}

\begin{center}
\begin{tabular}{|p{40pt}|p{40pt}|p{40pt}|p{40pt}|p{40pt}|p{40pt}|p{40pt}|p{40pt}|}
  \tshapeB{-1,-1,-1}{0,-1}0{L_7}
& \tshapeB{-1,-1,-1}{0,1}1{L_1}
& \tshapeA{-1,-1,1}{0,-1}2 %
& \tshapeB{-1,-1,1}{0,1}3{L_3}
& \tshapeA{-1,1,-1}{0,-1}4 %
& \tshapeA{-1,1,-1}{0,1}5 %
& \tshapeB{-1,1,1}{0,-1}6{L_3}
& \tshapeB{-1,1,1}{0,1}7{T_2}
\end{tabular}%
\vspace{6pt}

\begin{tabular}{|p{40pt}|p{40pt}|p{40pt}|p{40pt}|p{40pt}|p{40pt}|p{40pt}|p{40pt}|}
  \tshapeB{1,-1,-1}{0,-1}8{L_1}
& \tshapeB{1,-1,-1}{0,1}9{L_4}
& \tshapeB{1,-1,1}{0,-1}{10}{T_5}
& \tshapeB{1,-1,1}{0,1}{11}{T_4}
& \tshapeB{1,1,-1}{0,-1}{12}{L_4}
& \tshapeB{1,1,-1}{0,1}{13}{L_1}
& \tshapeB{1,1,1}{0,-1}{14}{L_1}
& \tshapeB{1,1,1}{0,1}{15}{L_7}
\end{tabular}
\end{center}

\begin{center}
\begin{tabular}{|p{40pt}|p{40pt}|p{40pt}|p{40pt}|p{40pt}|p{40pt}|p{40pt}|p{40pt}|}
  \xshapeA{0,-1}{-1,-1,-1}{0,-1}0 %
& \xshapeB{0,-1}{-1,-1,-1}{0,1}1{L_1}
& \xshapeB{0,-1}{-1,-1,1}{0,-1}2{L_1}
& \xshapeB{0,-1}{-1,-1,1}{0,1}3{L_3}
& \xshapeA{0,-1}{-1,1,-1}{0,-1}4 %
& \xshapeB{0,-1}{-1,1,-1}{0,1}5{T_4}
& \xshapeB{0,-1}{-1,1,1}{0,-1}6{T_4}
& \xshapeA{0,-1}{-1,1,1}{0,1}7 %
\end{tabular}%
\vspace{6pt}

\begin{tabular}{|p{40pt}|p{40pt}|p{40pt}|p{40pt}|p{40pt}|p{40pt}|p{40pt}|p{40pt}|}
  \xshapeB{0,-1}{1,-1,-1}{0,-1}8{L_1}
& \xshapeB{0,-1}{1,-1,-1}{0,1}9{X_7}
& \xshapeB{0,-1}{1,-1,1}{0,-1}{10}{T_5}
& \xshapeB{0,-1}{1,-1,1}{0,1}{11}{T_4}
& \xshapeB{0,-1}{1,1,-1}{0,-1}{12}{T_4}
& \xshapeB{0,-1}{1,1,-1}{0,1}{13}{L_3}
& \xshapeB{0,-1}{1,1,1}{0,-1}{14}{T_5}
& \xshapeB{0,-1}{1,1,1}{0,1}{15}{L_1}
\end{tabular}%
\vspace{6pt}

\begin{tabular}{|p{40pt}|p{40pt}|p{40pt}|p{40pt}|p{40pt}|p{40pt}|p{40pt}|p{40pt}|}
  \xshapeB{0,1}{-1,-1,-1}{0,-1}{16}{L_1}
& \xshapeB{0,1}{-1,-1,-1}{0,1}{17}{T_5}
& \xshapeB{0,1}{-1,-1,1}{0,-1}{18}{X_7}
& \xshapeB{0,1}{-1,-1,1}{0,1}{19}{T_4}
& \xshapeB{0,1}{-1,1,-1}{0,-1}{20}{T_4}
& \xshapeB{0,1}{-1,1,-1}{0,1}{21}{T_5}
& \xshapeB{0,1}{-1,1,1}{0,-1}{22}{L_3}
& \xshapeB{0,1}{-1,1,1}{0,1}{23}{L_1}
\end{tabular}%
\vspace{6pt}

\begin{tabular}{|p{40pt}|p{40pt}|p{40pt}|p{40pt}|p{40pt}|p{40pt}|p{40pt}|p{40pt}|}
  \xshapeB{0,1}{1,-1,-1}{0,-1}{24}{L_3}
& \xshapeB{0,1}{1,-1,-1}{0,1}{25}{T_4}
& \xshapeB{0,1}{1,-1,1}{0,-1}{26}{T_4}
& \xshapeB{0,1}{1,-1,1}{0,1}{27}{X_4}
& \xshapeB{0,1}{1,1,-1}{0,-1}{28}{X_7}
& \xshapeB{0,1}{1,1,-1}{0,1}{29}{L_1}
& \xshapeB{0,1}{1,1,1}{0,-1}{30}{L_1}
& \xshapeB{0,1}{1,1,1}{0,1}{31}{X_0}
\end{tabular}

\end{center}
  \caption{The corner types}\label{figCornerTypes}
\end{figure}


Let $\bshn$ be drawn uniformly at random from $\Gridhash_n(M)$ and $\bsn$ be its underlying permutation.
By combining the structural analysis above with the asymptotic distribution of points between cells from Section~\ref{sectLTXProportions},
we then calculate, for each $\ell\geqs1$, the asymptotic probability
\[
P_\ell \eq \liminfty \prob{\text{$\bsn$ has exactly $\ell$ distinct $M$\!-griddings}} .
\]
Then, letting
\[
\kappa_M
\eq \sum_{\ell\geqs1}P_\ell/\ell 
\eq \liminfty \frac{\big|\Grid_n(M)|}{\big|\Gridhash_n(M)|}
\]
be the \emph{correction factor} for the class,
we conclude that
$
\big|\Grid_n(M)\big|
\sim
\kappa_M \, \theta^\# \, g^n ,
$
where $\theta^\#$ and $g$ are given by Proposition~\ref{propGriddedAsymptoticsLTX}.

To determine the probabilities, we make repeated use of the following observation, recalling the definition of the common ratio $\lambda$ in equation~\eqref{eqLambda} on page~\pageref{eqLambda}.
\begin{obs}\label{obsProbs}
  Suppose $\Grid(M)$ is a connected one-corner class with dimensions $(r+1)\times(c+1)$, and
  let $\alpha$, $\beta$ and $\gamma$ be the asymptotic proportion of points of an $M$\!-gridded permutation
  in the corner cell, in any row cell, and in any column cell, respectively.
  Let $\lambda=\beta/(\alpha+c\beta)=\gamma/(\alpha+r\gamma)$.
  Suppose $\bshn$ is an $M$\!-gridded permutation chosen uniformly at random.
  Then, for each $k\geqs1$, we have
  \[
  \liminfty
  \prob{\text{the $k$th point from the top in the main row of $\bshn$ occurs in a given row cell}}
  \eq
  \lambda ,
  \]
  and similarly for the $k$th point from the bottom,
  and for the occurrence in a given column cell of the $k$th point from the left or right in the main column.

  Moreover, for $j\neq k$, the events that the $j$th and $k$th points from the top or bottom 
  in the main row 
  occur in specific cells are asymptotically independent.
  And analogously for points in the main column.
\end{obs}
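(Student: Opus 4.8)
The plan is to reduce the statement to a calculation about the horizontal and vertical skinny gridded classes $\HHH^\#$ and $\VVV^\#$ introduced in Section~\ref{sectLTXGridded}, together with the ``stitching'' bijection of Figure~\ref{figHVStitching}. Recall that an $M$\!-gridded permutation is the same data as a pair $(\sigma_{\mathsf{h}}^\#,\sigma_{\mathsf{v}}^\#)$ with $\sigma_{\mathsf{h}}^\#\in\HHH^\#$, $\sigma_{\mathsf{v}}^\#\in\VVV^\#$, and the number of points of each in the corner cell equal. So the event ``the $k$th point from the top in the main row of $\bshn$ lies in a specified row cell'' depends only on the horizontal component $\sigma_{\mathsf{h}}^\#$: it is the event that, when building $\sigma_{\mathsf{h}}^\#$ by adding maxima one at a time (Proposition~\ref{propSkinnyGriddedCount}), the $k$th such maximum was placed in that row cell.

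First I would set up the conditional model. Fix $n$ and let the corner cell contain $m$ points in $\bshn$. Conditioned on $m$, the horizontal component $\sigma_{\mathsf{h}}^\#$ is a uniformly random element of $\Gridhash^\#_{\HHH}$ of length $n$ with exactly $m$ points in the corner, and likewise for the vertical component; the two are conditionally independent. For the horizontal skinny class with $c$ row cells and one corner cell, placing each new maximum is an independent choice among the $c+1$ cells, conditioned on the corner being chosen exactly $m$ times; so the sequence of cell-choices is a uniformly random word over these $c+1$ letters with a prescribed number of corner letters. For such a word, the conditional probability that position $k$ (counted from the top) is a \emph{given} row cell is $\frac{1}{c}\cdot\frac{n-m}{n}$, and positions $j\neq k$ are asymptotically independent (they are exactly exchangeable, and conditioning on one position being a non-corner letter shifts the other's probability by $O(1/n)$). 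Thus everything comes down to the limiting value of $m/n$, the proportion of points in the corner.

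The main step is therefore to show $m/n\to\alpha$ in probability, and then to identify $\frac{1}{c}(1-\alpha)$ and $\frac{1}{r}(1-\alpha)$ with the common ratio $\lambda$. Convergence $m/n\to\alpha$ is immediate from Theorem~\ref{thmConnectedDistrib}: the distribution matrix of $\bshn$ is $\veps$-close to the unique maximal matrix $\Gamma_M$ with high probability, and the corner entry of $\Gamma_M$ is $\alpha$ by Section~\ref{sectLTXProportions}. The algebraic identification is then just $\lambda=\beta/(\alpha+c\beta)$ together with the row-sum equation $\alpha+c\beta = 1 - r\gamma$, whence $c\beta = (1-\alpha)-r\gamma$; symmetrically $r\gamma=(1-\alpha)-c\beta$, and substituting into $\lambda=\beta/(\alpha+c\beta)=\gamma/(\alpha+r\gamma)$ one checks directly from~\eqref{eqProportions} and~\eqref{eqLambda} that $\lambda = \frac{1-\alpha}{c}\cdot\frac{1}{\alpha+c\beta}\cdot(\alpha+c\beta)/(1-\alpha)\cdot c\beta\cdots$ — more cleanly, since $\alpha+c\beta$ is itself the total proportion of points in the main row, call it $\rho_{\mathsf{row}}$, the probability that a given one of the top $k$ points of the main row lies in a given row cell is $\beta/\rho_{\mathsf{row}}=\lambda$ exactly, with no need to go through $\alpha$ at all. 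I would present it this way: condition on the points of $\bshn$ lying in the main row (a high-probability, asymptotically $\rho_{\mathsf{row}}n$-sized set by Theorem~\ref{thmConnectedDistrib}), reduce to the horizontal skinny class, and observe that within a skinny class the top-$k$ marginal for a fixed cell is exactly its point-proportion $\beta/\rho_{\mathsf{row}}=\lambda$ in the limit, by the maximum-adding construction. The vertical case is identical with $\gamma/\rho_{\mathsf{col}}=\lambda$.

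The asymptotic independence of the $j$th and $k$th points for $j\neq k$ is then the last item. Within the conditioned word model it is exact exchangeability plus an $O(1/n)$ correction from conditioning on the corner-letter count: formally, $\prob{\text{pos }j\in A,\ \text{pos }k\in B} = \prob{\text{pos }j\in A}\prob{\text{pos }k\in B} + O(1/n)$ because removing one non-corner letter from a word of length $\Theta(n)$ changes the remaining cell-frequencies by $O(1/n)$. Combined with $m/n\to\alpha$ this gives the claimed asymptotic independence, and likewise across the main column; independence between a main-row event and a main-column event, should it be needed, follows from the conditional independence of $\sigma_{\mathsf{h}}^\#$ and $\sigma_{\mathsf{v}}^\#$ given $m$, plus concentration of $m$. \textbf{The main obstacle} is bookkeeping the conditioning cleanly: one must be careful that "uniformly random $M$\!-gridded permutation'' pushes forward, via stitching, to the correct conditional law on pairs (uniform on each skinny class given a common corner-count, which is \emph{not} the same as the product of the unconditional skinny laws), and that Theorem~\ref{thmConnectedDistrib} is invoked to control $m/n$ before any of the within-skinny-class combinatorics is used. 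Once that is set up, each individual estimate is elementary.
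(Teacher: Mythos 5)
Your final formulation --- condition on the number of points in the given row cell and on the total number of points in the main row, note that the interleaving within the main row is uniform so the top-$k$ marginal for a fixed cell is exactly its share $\beta/\rho_{\mathsf{row}}=\beta/(\alpha+c\beta)=\lambda$, and invoke Theorem~\ref{thmConnectedDistrib} for concentration of both counts --- is exactly the paper's proof, and your treatment of pairwise independence (exchangeability with an $O(1/n)$ correction) matches the paper's computation of $\binom{m-2}{\ell-2}\big/\binom{m}{\ell}=\tfrac{\ell(\ell-1)}{m(m-1)}\to\lambda^2$. One caution about your first pass through the stitching decomposition: the horizontal component $\sigma_{\mathsf{h}}^\#$ has length equal to the number of points in the \emph{main row} (about $(\alpha+c\beta)n$), not $n$, so the conditional probability $\tfrac1c\cdot\tfrac{n-m}{n}$ you wrote there tends to $(1-\alpha)/c=\beta+r\gamma/c\neq\lambda$; you correctly abandon this in favour of the $\beta/\rho_{\mathsf{row}}$ computation, but the erroneous intermediate formula and the garbled algebra following it should be deleted rather than left as a false start.
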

\begin{proof}
  Let $A$ be the event that the $k$th point from the top in the main row of $\bshn$ occurs in the given row cell.
  Let $L$ be the number of points in the given cell and $M$ be the total number of points in the main row.
  Then, conditioning on these numbers, we have
  \[
  \prob{A\:\big|\:L=\ell \:\wedge\: M=m} \eq \binom{m-1}{\ell-1}\Big/\binom{m}{\ell} \eq \frac{\ell}m .
  \]
  Now by Theorem~\ref{thmConnectedDistrib}, for any $\veps>0$, we have
  \[
  \liminfty\prob{\big|L-\beta n\big|\leqs\veps n}\eq1
  \qquad \text{and} \qquad
  \liminfty\prob{\big|M-\beta n/\lambda\big|\leqs\veps n}\eq1
  .
  \]
  Hence, $\liminfty\prob{A}=\lambda$.

  Now let $B$ be the event that the $j$th and $k$th points from the top in the main row of $\bshn$ both occur in a given row cell.
  Then,
  \[
  \prob{B\:\big|\:L=\ell \:\wedge\: M=m} \eq \binom{m-2}{\ell-2}\Big/\binom{m}{\ell}
  \eq \frac{\ell(\ell-1)}{m(m-1)} .
  \]
  Applying Theorem~\ref{thmConnectedDistrib} then yields $\liminfty\prob{B}=\lambda^2$, as required.

  A similar argument handles the event that the $j$th and $k$th points occur in two specific distinct cells.
\end{proof}

Note also that if dancing can occur asymptotically independently in more than one location, then the corresponding correction factors multiply.
This follows from the following arithmetic observation.
\begin{obs}\label{obsIndepDancing}
  Given real sequences $(P'_i)_{i\geqs1}$ and $(P''_j)_{j\geqs1}$,
  let $\kappa'=\sum\limits_{i\geqs1}P'_i/i$ and $\kappa''=\sum\limits_{j\geqs1}P''_j/j$.
  Suppose $P_\ell =\sum\limits_{ij=\ell }P'_i\,P''_j$.
  Then,
  \[
  \sum\limits_{\ell \geqs1}P_\ell /\ell  \eq \kappa'\,\kappa'' .
  \]
\end{obs}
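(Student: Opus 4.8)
The plan is to expand the left-hand side using the hypothesis $P_\ell=\sum_{ij=\ell}P'_iP''_j$ and then reindex the resulting double sum by the pair $(i,j)$ rather than by $\ell$ together with a divisor of $\ell$. Writing
\[
\sum_{\ell\geqs1}\frac{P_\ell}{\ell}
\eq
\sum_{\ell\geqs1}\frac1\ell\sum_{ij=\ell}P'_i\,P''_j ,
\]
I would observe that every pair $(i,j)$ of positive integers contributes to exactly one value of $\ell$, namely $\ell=ij$, and conversely each term in the expansion of $P_\ell$ corresponds to such a pair. Hence the double sum may be rewritten as a sum over all $(i,j)$, and since $1/\ell=1/(ij)=(1/i)(1/j)$ for the relevant $\ell$, we get
\[
\sum_{\ell\geqs1}\frac{P_\ell}{\ell}
\eq
\sum_{i\geqs1}\sum_{j\geqs1}\frac{P'_i\,P''_j}{i\+j}
\eq
\Big(\sum_{i\geqs1}\frac{P'_i}i\Big)\Big(\sum_{j\geqs1}\frac{P''_j}j\Big)
\eq
\kappa'\,\kappa'' ,
\]
which is the claimed identity.

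The only point requiring care is the legitimacy of the reindexing and the factorisation of the double sum into a product of two single sums. I would justify this by appealing to absolute convergence: in the intended application the $P'_i$ and $P''_j$ are (limits of) probabilities, hence nonnegative with $\sum P'_i\leqs1$ and $\sum P''_j\leqs1$, so all the series involved converge absolutely (indeed $\kappa'\leqs\sum P'_i\leqs1$ and likewise for $\kappa''$). For nonnegative terms Tonelli's theorem for series permits the unrestricted rearrangement, and once absolute convergence is known, Fubini's theorem allows the double sum to be split as a product of the two factors. For the statement as phrased over arbitrary real sequences, I would simply add the standing hypothesis that $\sum_i|P'_i|/i$ and $\sum_j|P''_j|/j$ converge, which is all that is needed and is automatic in every use we make of it.

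I do not expect any real obstacle here: this is a routine manipulation of a Dirichlet-type convolution, and the entire content is the bookkeeping of the $(\ell,\text{divisor})\leftrightarrow(i,j)$ correspondence together with the elementary fact $\frac1{ij}=\frac1i\cdot\frac1j$. If a one-line version is preferred, the three displayed equalities above already constitute the complete argument, with the middle step being the reindexing and the last being the factorisation of an absolutely convergent double series.
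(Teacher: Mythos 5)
Your proof is correct and is essentially identical to the paper's: the paper's own argument is exactly the chain $\kappa'\kappa''=\sum_{i,j}P'_iP''_j/(ij)=\sum_\ell \frac1\ell\sum_{ij=\ell}P'_iP''_j=\sum_\ell P_\ell/\ell$, read in the opposite direction from yours. Your extra remarks on absolute convergence (automatic here since the $P$'s are probabilities) are a reasonable tidying-up of a point the paper leaves implicit.
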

\begin{proof}
  \[
  \kappa'\,\kappa''
  \eq
  \sum\limits_{i\geqs1}\frac{P'_i}i \, \sum\limits_{j\geqs1}\frac{P''_j}j
  \eq
  \sum\limits_{i,j\geqs1} \frac{P'_i\,P''_j}{ij}
  \eq
  \sum\limits_{\ell \geqs1} \frac1\ell  \sum\limits_{ij=\ell } P'_i\,P''_j
  \eq
  \sum\limits_{\ell \geqs1} \frac{P_\ell }\ell
  . \qedhere
  \]
\end{proof}

\subsubsection*{\normalsize Non-corner peaks}

Before looking at each corner type in turn, we consider the effect of non-corner peaks.
By definition, in any constrained $M$\!-gridded permutation,
at each non-corner peak of $M$
there is a peak point which can dance.
So, if $M$ has $p$ non-corner peaks, these contribute a factor of $2^p$ to the number of possible $M$\!-griddings, in an analogous manner to skinny classes (Proposition~\ref{propSkinnyGriddings}).
Thus, since almost all $M$\!-gridded permutations are $M$\!-constrained
(see Proposition~\ref{propLTXConstrainedAreGeneric} below),
the asymptotic enumeration of \textsf{L}, \textsf{T} and \textsf{X}-shaped classes is given by the following result.

\begin{thm}\label{thmLTXAsympt}
  Suppose $\Grid(M)$ is a connected one-corner class with corner type $\tau$, and $p$ non-corner peaks, then
  \[
  \big|\Grid_n(M)\big|
  \;\sim\;
  2^{-p} \,
  \kappa(\tau) \,
  \theta^\# \,
  g^n ,
  \]
where $\kappa(\tau)$ is the correction factor for a gridding matrix with the same corner type and dimensions as $M$ but with no non-corner peaks,
and $\theta^\#$ and $g$ are as given by Proposition~\ref{propGriddedAsymptoticsLTX}.
\end{thm}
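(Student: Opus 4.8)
The plan is to combine the three ingredients already developed: the exact asymptotic count of gridded permutations (Proposition~\ref{propGriddedAsymptoticsLTX}), the fact that almost all gridded permutations are constrained (Proposition~\ref{propLTXConstrainedAreGeneric}, proved later), and the multiplicativity of correction factors over independent dancing locations (Observation~\ref{obsIndepDancing}). Write $k$ for the number of $M$\!-griddings of $\bsn$, the underlying permutation of a uniformly random $\bshn\in\Gridhash_n(M)$. Since the number of griddings of any $n$-permutation in a $\Grid(M)$ with fixed dimensions is bounded by a polynomial in $n$, and since an exponentially small proportion of gridded permutations fail to be constrained, the limiting distribution of $k$ is governed entirely by the constrained gridded permutations; moreover $\big|\Grid_n(M)\big|\sim\kappa_M\,\theta^\#g^n$ with $\kappa_M=\sum_{\ell\geqs1}P_\ell/\ell$, exactly as set out in step~5 of the general strategy. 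So the entire content of the theorem is the identity $\kappa_M = 2^{-p}\,\kappa(\tau)$.

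To establish this, I would argue that for a constrained $M$\!-gridded permutation $\sigma^\#$, every $M$\!-gridding of $\sigma$ arises by a choice of dancing moves that partition cleanly into (i) independent choices at each of the $p$ non-corner peaks — whether or not the corresponding peak point dances — and (ii) the dancing moves localised at the corner, which depend only on the corner type $\tau$. The key structural point, already foreshadowed in the ``Non-corner peaks'' discussion, is that a non-corner peak point is adjacent to its dividing line and its dancing is unaffected by, and does not affect, the position of any other divider; by Observation~\ref{obsProbs} the relevant events at distinct peaks are asymptotically independent, and likewise independent of the corner events, because they concern disjoint sets of cells. Thus if $\sigma$ has $\ell$ griddings, we can write $\ell = \ell'\cdot\ell''$ where $\ell'$ counts the independent non-corner choices and $\ell''$ the corner choices, with $\ell' \in \{1,2,\dots,2^p\}$ (more precisely, a constrained gridded permutation always has all $p$ peak points free to dance, so $\ell'=2^p$ with asymptotic probability~$1$), and $\ell''$ distributed according to the corner analysis for a matrix of the same corner type and dimensions but with no non-corner peaks.

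Concretely, I would introduce $P'_i$ for the limiting probability that the non-corner peaks contribute a factor of $i$ (so $P'_{2^p}=1$ and $P'_i=0$ otherwise, since constrained permutations dominate), and $P''_j$ for the limiting probability that the corner contributes a factor of $j$ — the latter being by definition the probabilities computed in the per-corner-type analysis of Sections~\ref{sectPeakCorners}--\ref{sectCornerComparison}, whose correction factor is $\kappa(\tau)=\sum_j P''_j/j$. Asymptotic independence of the non-corner and corner dancing then gives $P_\ell = \sum_{ij=\ell}P'_i\,P''_j$, and Observation~\ref{obsIndepDancing} yields $\kappa_M = (\sum_i P'_i/i)(\sum_j P''_j/j) = 2^{-p}\,\kappa(\tau)$. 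Feeding this into $\big|\Grid_n(M)\big|\sim\kappa_M\,\theta^\#g^n$ gives the stated formula, with $\theta^\#$ and $g$ read off from Proposition~\ref{propGriddedAsymptoticsLTX}.

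The main obstacle is justifying that the non-corner dancing and the corner dancing genuinely decouple — that there is no constrained gridded permutation whose griddings couple a non-corner peak choice with a corner choice, and that the ``$P'_{2^p}=1$'' claim is legitimate, i.e. that with asymptotic probability one all $p$ non-corner peak points are present and free to dance. Both follow from the definition of $M$\!-constrained (every gridding is a union of independent dancing moves, and every cell is non-empty in every gridding) together with Proposition~\ref{propLTXConstrainedAreGeneric}, but one must check that the relevant events — ``the $k$th point from the top in a given row cell'' type events of Observation~\ref{obsProbs} — really do control the presence of peak points at all $p$ non-corner peaks simultaneously, which requires a union bound over the (finitely many, $M$-dependent) peaks and a short argument that a peak point dancing never disturbs a divider at another peak or at the corner. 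Once that localisation is in hand, the rest is the bookkeeping of Observation~\ref{obsIndepDancing}.
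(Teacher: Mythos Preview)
Your proposal is correct and follows essentially the same approach as the paper: the paper's argument (given in the paragraph immediately preceding the theorem) observes that in any constrained gridded permutation each non-corner peak contributes an independent factor of $2$ to the gridding count, exactly as in the skinny case, so that $\kappa_M=2^{-p}\kappa(\tau)$, and then invokes Proposition~\ref{propLTXConstrainedAreGeneric}. You are more explicit than the paper in routing the multiplicativity through Observation~\ref{obsIndepDancing} and in flagging the decoupling of non-corner and corner dancing as something requiring justification, but the underlying logic is identical.
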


Note that the correction factor for the class is given by $\kappa_M=2^{-p}\, \kappa(\tau)$.

\subsubsection*{\normalsize Worked example}

Before turning to the calculation of the correction factors for each corner type, we very briefly illustrate our method 
with an example, by determining the asymptotic enumeration of the 
\mbox{\textsf{X}-shaped} class from Figure~\ref{figLTX}:
\[
  M_{\mathsf{X}} \eq
  \gcfour5{0,-1}{0,1}{1,1,1,-1,1}{0,1}
  \!.
\]
%
%
Firstly, from Proposition~\ref{propGriddedAsymptoticsLTX}, since $r=3$ and $c=4$, we have $\big|\Gridhash_n(M_{\mathsf{X}})\big|\sim \frac32\times6^n$.

Secondly, $M_{\mathsf{X}}$ has three non-corner peaks and has corner type $X_0$. 
Now, $\kappa(X_0)=(1-\lambda)^2$, and from equation~\eqref{eqLambda} we know that $\lambda =\frac16$.

Thus, by Theorem~\ref{thmLTXAsympt}, we have
\[
\big|\Grid_n(M_{\mathsf{X}})\big| \;\sim\; 2^{-3} \times \big(\tfrac56\big)^2 \times \tfrac32 \times 6^n \eq \tfrac{25}{192} \times 6^n.
\]

\subsubsection*{\normalsize Correction factors for corner types}

We now calculate
$\kappa(\tau)$ for each of the eleven inequivalent corner types.
Table~\ref{tblKappa} summarises our results.
In the analysis of a corner type, what is important is the number of peaks, diagonals and tees it contains, and how these are combined.

\begin{table}[t] 
  \centering
  \renewcommand{\arraystretch}{1.7}
  \setgcvpadding{.4}
  \begin{tabular}{|c|c|c|c|c|c|c|}
    \hline
    $\tau$ & & \small$P$ & \small$D$ & \small$T$ & $\kappa(\tau)$ & $\kappa(\rot{\tau})$ \\\hline\hline
    $L_0$ & \raisebox{-2.5pt}{\gctwo2{-1,-1}{-1}} & \small0 & \small0 & \small0 &
        \multicolumn{2}{c|}{$1$}  \\\hline
    $L_1$ & \raisebox{-2.5pt}{\gctwo2{-1,-1}{1}} & \small1 & \small0 & \small0 &
         ~ \raisebox{1.75pt}{$\frac12 \big(1+\frac{c \alpha \lambda}{\alpha + \gamma}\big)$} ~  &  ~ \raisebox{1.75pt}{$\frac12 \big(1+\frac{r \alpha \lambda}{\alpha + \beta}\big)$} ~  \\\hline
    $L_3$ & \raisebox{-2.5pt}{\gctwo2{-1,1}{1}} & \small0 & \small0 & \small1 &
        \multicolumn{2}{c|}{\raisebox{1.75pt}{$\frac{\lambda\,(1-\lambda)}{(1-(c-1)\lambda)\, (1-(r-1)\lambda)}$}}  \\\hline
    $L_4$ & \raisebox{-2.5pt}{\gctwo2{1,-1}{-1}} & \small2 & \small0 & \small0 &
        \multicolumn{2}{c|}{\raisebox{1pt}{$\kappa(L_1) \, \kappa(\rot{L_1})$}}  \\\hline
    $L_7$ & \raisebox{-2.5pt}{\gctwo2{1,1}{1}} & \small0 & \small1 & \small0 &
        \multicolumn{2}{c|}{$1-\lambda$}  \\\hline
    $T_2$ & \raisebox{-2.5pt}{\gctwo3{-1,-1,1}{0,-1}} & \small1 & \small1 & \small0 &
        \raisebox{1pt}{$\kappa(\rot{L_1}) \, \kappa(L_7)$} & \raisebox{1pt}{$\kappa(L_1) \, \kappa(L_7)$} \\\hline
    $T_4$ & \raisebox{-2.5pt}{\gctwo3{-1,1,-1}{0,-1}} & \small1 & \small0 & \small1 &
        \raisebox{1pt}{$\kappa(\rot{L_1}) \, \kappa(L_3)$} & \raisebox{1pt}{$\kappa(L_1) \, \kappa(L_3)$} \\\hline
    $T_5$ & \raisebox{-2.5pt}{\gctwo3{-1,1,-1}{0,1}} & \small2 & \small0 & \small0 &
        \raisebox{1pt}{$\kappa(\rot{L_1})^2$} & \raisebox{1pt}{$\kappa(L_1)^2$}  \\\hline
    $X_0$ & \raisebox{-2.5pt}{\gcthree3{0,-1}{-1,-1,-1}{0,-1}} & \small0 & \small2 & \small0 &
        \multicolumn{2}{c|}{$\kappa(L_7)^2$}  \\\hline
    $X_4$ & \raisebox{-2.5pt}{\gcthree3{0,-1}{-1,1,-1}{0,-1}} & \small0 & \small0 & \small2 &
        \multicolumn{2}{c|}{$\kappa(L_3)^2$}  \\\hline
    $X_7$ & \raisebox{-2.5pt}{\gcthree3{0,-1}{-1,1,1}{0,1}} & \small2 & \small1 & \small0 &
        \multicolumn{2}{c|}{$\kappa(L_1) \, \kappa(\rot{L_1}) \,\kappa(L_7)$}  \\\hline
  \end{tabular}
  \caption{The correction factors and the number of peaks ($P$), diagonals ($D$) and tees ($T$) for each of the eleven inequivalent corner types}\label{tblKappa}
\end{table}

Rotating a class by $90^\circ$ switches the roles of $r$ and~$c$ and of $\beta$ and~$\gamma$. We use $\rot{\tau}$ to denote a $90^\circ$ rotation of corner type~$\tau$.
In the figures used below for each corner type, dots indicate where additional row and column cells may occur, as long as they don't create additional corner peaks, diagonals or tees.

We begin with the simplest of the corner types.

\subsubsection*{\normalsize Corner type \texorpdfstring{$L_0$}{L0}}

\begin{center}
{\setgcptsize{.125}\setgcextra{\gchdots{2}{2}\gcvdots{0}{0}}\gctwo2{-1,-1}{-1}}
\end{center}

This corner type has no peaks, diagonals or tees.
It can't occur in a \textsf{T}-shaped or \textsf{X}-shaped class.
In a constrained gridded permutation, no dancing is possible, so the underlying permutation has a single gridding.
Thus $P_1=1$, and $P_\ell=0$ if $\ell>1$.
Hence $\kappa(L_0)=1$.

\subsection{Corners with peaks}\label{sectPeakCorners}

\subsubsection*{\normalsize Corner type \texorpdfstring{$L_1$}{L1}}

\begin{center}
{\setgcptsize{.125}
\setgcextra{\gchdots{2}{2}\gcvdots{0}{0}}\gctwo2{-1,-1}{1} \quad
\setgcextra{\gchdots{-2}{2}\gcvdots{1}{0}}\gctwo2{-1,-1}{0,1} \quad
\setgcextra{\gchdots{-2}{2}\gchdots{3}{2}\gcvdots{1}{0}}\gctwo3{-1,-1,-1}{0,1} \quad
\raisebox{3.67pt}{\setgcextra{\gchdots{-2}{2}\gcvdots{1}{0}\gcvdots{1}{5}}\gcthree2{0,-1}{-1,-1}{0,1}}}
\end{center}

This corner type has one peak.
It can't occur in an \textsf{X}-shaped class.
As can be seen from the figures, the peak may be orientated in different ways with respect to the other cells adjacent to the corner.
However, the analysis is the same in every case, so we consider this to be a single corner type.

Given a constrained gridded permutation,
let $Q$ be the peak point, and let $R$ be the lowest point in any of the row cells.
$Q$~is the only point that may be able to dance.
It can't dance if it is above $R$ in the corner cell.
Otherwise (if it is below $R$, either in the corner cell or in the cell below the corner) it can dance.
We say that point $R$ is the \emph{controller}, since it controls whether $Q$ can dance or not.
See Figure~\ref{figL1} for an illustration of the three cases.

\begin{figure}[ht]
  \centering
  \begin{tikzpicture}[scale=0.25]
    \plotgriddedperm{9}{7,5,1,2,4,9,6,8,3}{5,7}{2}
    \node at (4.1,4.9) {\footnotesize$Q$};
    \node at (8.2,3.9) {\footnotesize$R$};
  \end{tikzpicture}
  \qquad\qquad
  \begin{tikzpicture}[scale=0.25]
    \plotgriddedperm{9}{7,5,1,2,3,9,6,8,4}{5,7}{2}
    \circpt53
    \node at (4.1,3.9) {\footnotesize$Q$};
    \node at (8.2,4.9) {\footnotesize$R$};
  \end{tikzpicture}
  \qquad\qquad
  \begin{tikzpicture}[scale=0.25]
    \plotgriddedperm{9}{1,2,7,5,3,9,6,8,4}{5,7}{3}
    \circpt53
    \node at (4.1,2.1) {\footnotesize$Q$};
    \node at (8.2,4.9) {\footnotesize$R$};
  \end{tikzpicture}
  \caption{Three \!\gctwo{3}{-1,-1,-1}{1}\!-gridded permutations; the peak point is circled if it can dance}\label{figL1}
\end{figure}

The peak point, $Q$, is either in the corner or in the cell immediately below the corner.
Thus the asymptotic probability that $Q$ is in the corner equals $\alpha/(\alpha+\gamma)$,
and the asymptotic probability that the lowest point in the main row is not in the corner equals $c\beta/(\alpha+c\beta)$.
These events are asymptotically independent, so
\[
\prob{\text{$Q$ can't dance}} \;\sim\;
P_1 \eq \frac{\alpha}{\alpha+\gamma} \times \frac{c\beta}{\alpha+c\beta}
\eq
\frac{c\alpha\lambda}{\alpha+\gamma}
\qquad
\text{and}
\qquad
P_2 \eq 1-P_1 .
\]
Hence,
\[
\kappa(L_1)
\eq  P_1 + \tfrac12 P_2
\eq  \frac12\! \left(1+\frac{c \alpha \lambda}{\alpha + \gamma}\right)
\eq  \frac{2 r}{3 r-c+q-1} .
\]

\subsubsection*{\normalsize Corner type \texorpdfstring{$L_4$}{L4}}

\begin{center}
{\setgcptsize{.125}
\setgcextra{\gchdots{2}{2}\gcvdots{0}{0}}\gctwo2{1,-1}{-1} \quad
\setgcextra{\gchdots{-2}{2}\gchdots{3}{2}\gcvdots{1}{0}}\gctwo3{1,1,-1}{0,-1}}
\end{center}

This corner type has two peaks.
It can't occur in an \textsf{X}-shaped class.
Given a constrained gridded permutation,
let $Q_1$ be the peak point at the left and $Q_2$ be the peak point at the top.
The same analysis as for $L_1$ gives
\[
\prob{\text{$Q_1$ can't dance}} \;\sim\; \frac{c\alpha\lambda}{\alpha+\gamma},
\qquad
\prob{\text{$Q_2$ can't dance}} \;\sim\; \frac{r\alpha\lambda}{\alpha+\beta}.
\]
The event that $Q_1$ can dance depends on the points adjacent to the row divider below the corner,
whereas the event that $Q_2$ can dance depends on the points adjacent to the column divider to the right of the corner.
Thus these events
are asymptotically independent.
So, by Observation~\ref{obsIndepDancing}, we have
\[
\kappa(L_4)
\eq \kappa(L_1) \, \kappa(\rot{L_1})
\eq \frac14 \! \left(1+\frac{r \alpha \lambda}{\alpha + \beta}\right) \left(1+\frac{c \alpha \lambda}{\alpha + \gamma}\right)
\eq \frac{4rc}{(3r-c+q-1)(3c-r+q-1)} .
\]

\subsubsection*{\normalsize Corner type \texorpdfstring{$T_5$}{T5}}

\begin{center}
{\setgcptsize{.125}
\setgcextra{\gchdots{-2}{2}\gchdots{3}{2}\gcvdots{1}{0}}\gctwo3{-1,1,-1}{0,1} \quad
\raisebox{3.67pt}{\setgcextra{\gchdots{-2}{2}\gchdots{3}{2}\gcvdots{1}{0}\gcvdots{1}{5}}\gcthree3{0,1}{-1,1,-1}{0,1}}}
\end{center}

This corner type also has two peaks.
It can't occur in an \textsf{L}-shaped class.
Given a constrained gridded permutation,
let $Q_1$ and $Q_2$ be the two peak points.
The same analysis as for $L_1$ gives
\[
\prob{\text{$Q_1$ can't dance}} \;\sim\;
\prob{\text{$Q_2$ can't dance}} \;\sim\; \frac{r\alpha\lambda}{\alpha+\beta}.
\]
The event that one of the peak points can dance depends on the points adjacent to the column divider to the left of the corner,
whereas the event that the other peak point can dance depends on the points adjacent to the column divider to the right of the corner.
Thus these events
are asymptotically independent.
So, by
Observation~\ref{obsIndepDancing}, we have
\[
\kappa(T_5)
\eq \kappa(\rot{L_1})^2
\eq  \frac14\! \left(1+\frac{r \alpha \lambda}{\alpha + \beta}\right)^{\!2}
\eq  \frac{4c^2}{(3c-r+q-1)^2} .
\]

\subsection{Corners with diagonals}\label{sectDiagonalCorners}

\subsubsection*{\normalsize Corner type \texorpdfstring{$L_7$}{L7}}

\begin{center}
{\setgcptsize{.125}
\setgcextra{\gchdots{2}{2}\gcvdots{0}{0}}\gctwo2{1,1}{1} \quad
\setgcextra{\gchdots{-2}{2}\gchdots{3}{2}\gcvdots{1}{0}}\gctwo3{1,1,1}{0,1}}
\end{center}

This corner type has one diagonal.
It can't occur in an \textsf{X}-shaped class.
Let $\CR$ be the cell immediately to the right of the corner, and $\CB$ be the cell immediately below the corner.

\begin{figure}[ht]
  \centering
  \begin{tikzpicture}[scale=0.225]
  \plotgriddedperm{16}{12,1,3,13,2,16,4,5,6,7,8,11,14,9,10,15}{9,13}{2,6}
  \circpt74 \circpt85 \circpt96 \circpt{10}7 \circpt{11}8
  \node at (10.6,15.4) {\darkgrey\footnotesize$\CR$};
  \node at (1.6,5.4) {\darkgrey\footnotesize$\CB$};
  \node at (15,8) {\footnotesize$Q_1$};
  \node at (7,15) {\footnotesize$Q_2$};
  \end{tikzpicture}
  \caption{A \!\gcthree{3}{1,1,1}{1}{1}\!-gridded permutation; the five circled points, below $Q_1$ and to the right of $Q_2$, can dance; $k_1=2$ and $k_2=3$}\label{figL7}
\end{figure}

Given a constrained gridded permutation,
let $Q_1$ be the lowest point in the main row that is not in $\CR$.
Note that $Q_1$ may be in the corner cell.
Let $k_1$ be the number of points in $\CR$ that lie below $Q_1$.
Similarly, let $Q_2$ be the rightmost point in the main column that is not in $\CB$ ($Q_2$~may also be in the corner cell),
and let $k_2$ be the number of points in $\CB$ that lie to the right of $Q_2$.
These $k_1+k_2$ points can dance diagonally, giving $k_1+k_2+1$ distinct griddings of the underlying permutation.
Note that either or both of $k_1$ and $k_2$ may be zero.
Points above $Q_1$ and points to the left of $Q_2$ can't dance.
Points $Q_1$ and $Q_2$ are the controllers, because they control which points can dance.
See Figure~\ref{figL7} for an illustration.

Now, by Observation~\ref{obsProbs},
for each $i\geqs0$, we have
\[
\prob{k_1=i} \;\sim\; \lambda^i(1-\lambda) \text{\qquad and also \qquad} \prob{k_2=i} \;\sim\; \lambda^i(1-\lambda) .
\]
The value of $k_1$ is asymptotically independent of the value of~$k_2$.
Thus, for each $\ell\geqs1$, the asymptotic probability of having exactly $\ell$ griddings is
\[
  \prob{k_1+k_2+1=\ell} \eq \sum_{k_1=0}^{\ell-1} \prob{k_1=i}\prob{k_2=\ell-1-i} \;\sim\; \ell\,\lambda^{\ell-1}(1-\lambda)^2 .
\]
Hence,
\[
\kappa(L_7) \eq \sum_{\ell=1}^\infty \lambda^{\ell-1}(1-\lambda)^2
\eq 1-\lambda
\eq 1- \frac{r+c+1-q}{2 r c}.
\]

\subsubsection*{\normalsize Corner type \texorpdfstring{$T_2$}{T2}}

\begin{center}
{\setgcptsize{.125}
\setgcextra{\gchdots{-2}{2}\gchdots{3}{2}\gcvdots{1}{0}}\gctwo3{-1,-1,1}{0,-1} \quad
\raisebox{3.67pt}{\setgcextra{\gchdots{-2}{2}\gchdots{3}{2}\gcvdots{1}{0}\gcvdots{1}{5}}\gcthree3{0,-1}{-1,-1,1}{0,-1}}}
\end{center}

This corner type has one peak and one diagonal.
It can't occur in an \textsf{L}-shaped class.

Dancing at the peak depends on the points adjacent to the column divider to the right of the corner,
whereas dancing on the diagonal
depends on the points adjacent to the column divider to the left of the corner and
points adjacent to the row divider below the corner.
So these
are asymptotically independent.
Thus, from the analysis for $L_1$ and $L_7$, and by Observation~\ref{obsIndepDancing}, we have
\[
\kappa(T_2)
\eq \kappa(\rot{L_1}) \, \kappa(L_7)
\eq \frac{1}{2}\! \left(1+\frac{r \alpha \lambda}{\alpha + \beta}\right) (1-\lambda)
. 
\]

\subsubsection*{\normalsize Corner type \texorpdfstring{$X_0$}{X0}}

\begin{center}
{\setgcptsize{.125}\setgcextra{\gchdots{-2}{2}\gchdots{3}{2}\gcvdots{1}{0}\gcvdots{1}{5}}\gcthree3{0,-1}{-1,-1,-1}{0,-1}}
\end{center}

This corner type has two diagonals.
It can't occur in an \textsf{L}-shaped or \textsf{T}-shaped class.

Dancing on one of the diagonals depends on the points adjacent to the column divider
to the left of the corner and points adjacent to the row divider below the corner,
whereas
dancing on the other diagonal depends on the points adjacent to the column divider
to the right of the corner and points adjacent to the row divider above the corner.
So these
are asymptotically independent.
Thus, from the analysis for $L_7$, and by Observation~\ref{obsIndepDancing}, we have
\[
\kappa(X_0)
\eq \kappa(L_7)^2
\eq (1-\lambda)^2
. 
\]

\subsubsection*{\normalsize Corner type \texorpdfstring{$X_7$}{X7}}

\begin{center}
{\setgcptsize{.125}\setgcextra{\gchdots{-2}{2}\gchdots{3}{2}\gcvdots{1}{0}\gcvdots{1}{5}}\gcthree3{0,-1}{-1,1,1}{0,1}}
\end{center}

This corner type has two peaks and one diagonal.
It can't occur in an \textsf{L}-shaped or \textsf{T}-shaped class.

Dancing at the two peaks depends on the points adjacent to the column divider
to the left of the corner and points adjacent to the row divider above the corner,
whereas
dancing on the diagonal depends on the points adjacent to the column divider
to the right of the corner and points adjacent to the row divider below the corner.
So these
are asymptotically independent.
Thus, from the analysis for $L_4$ and $L_7$, and by Observation~\ref{obsIndepDancing}, we have
\[
\kappa(X_7)
\eq \kappa(L_4)\,\kappa(L_7)
\eq \frac{1}{4}\! \left(1+\frac{r \alpha \lambda}{\alpha + \beta}\right) \left(1+\frac{c \alpha \lambda}{\alpha + \gamma}\right) (1-\lambda)
. 
\]

\subsection{Corners with tees}\label{sectTeeCorners}

\subsubsection*{\normalsize Corner type \texorpdfstring{$L_3$}{L3}}

\begin{center}
{\setgcptsize{.125}
\setgcextra{\gchdots{2}{2}\gcvdots{0}{0}}\gctwo2{-1,1}{1} \quad
\setgcextra{\gchdots{-2}{2}\gchdots{3}{2}\gcvdots{1}{0}}\gctwo3{-1,-1,1}{0,1} \quad
\raisebox{3.67pt}{\setgcextra{\gchdots{-2}{2}\gchdots{3}{2}\gcvdots{1}{0}\gcvdots{1}{5}}\gcthree3{0,-1}{-1,-1,1}{0,1}}}
\end{center}

This corner type has one tee.
It is the only one of the eleven corner types that can occur in \textsf{L}-shaped, \textsf{T}-shaped \emph{and} \textsf{X}-shaped classes.

Analysis of this corner type is considerably more complex than for the corners we've considered so far.
We analyse the main row by reading its points from the bottom.
Similarly, we analyse the main column by reading its points from the right.
Let $C$ denote the corner cell,
let $\CR$ be the cell immediately to the right of~$C$,
and let $\CB$ be the cell immediately below~$C$.

\medskip
\noindent
\textbf{Sequences of points.}
We represent the sequence of points in the main row, when read from the bottom, by words over the alphabet $\{\xxx,\yyy,\zzz\}$.
The letter $\xxx$ represents a point in the corner cell~$C$,
the letter $\yyy$ represents a point in~$\CR$,
and the letter $\zzz$ represents a point in some other cell of the main row.
We use subscripts to identify specific points.
Thus $\xxx_Q$ represents the occurrence of point~$Q$ in the corner cell.
For example, the sequence of points in the main row of the gridded permutation at the left of Figure~\ref{figL3Case1} 
is represented by the word
$
\xxx_Q\,\yyy\,\xxx_R\,\yyy\,\yyy\,\zzz_S\,\xxx\,\zzz\,\yyy
$.

Simple \emph{regular expressions} are used to denote sets specifying the possible ordering of the initial points.
In these regular expressions, $a^\STAR$ denotes a sequence of zero or more copies of letter~$a$.
For example, $\zzz^\STAR \xxx_Q$ consists of arrangements of points in the main row in which $Q$ is the lowest point in the corner cell ($\xxx_Q$~being the first occurrence of~$\xxx$) and where $Q$ is below any points in~$\CR$, the only points below $Q$ occurring elsewhere (represented by~$\zzz^\STAR$).
Note that any ordering of points is permitted after the initial points specified by the regular expression.

By Observation~\ref{obsProbs}, associated with $\xxx$, $\yyy$ and $\zzz$ we have the following asymptotic probabilities:
\newcommand{\ppx}{1-c\lambda}
\newcommand{\ppy}{\lambda}
\newcommand{\ppz}{(c-1)\lambda}
\[
\px \eq \ppx,  \qquad
\py \eq \ppy,  \qquad
\pz \eq \ppz.
\]
By symmetry, for the location of points in the main column, when read from the right, the same approach yields the following asymptotic probabilities:
\[
\qx \eq 1-r\lambda,  \qquad
\qy \eq \lambda,  \qquad
\qz \eq (r-1)\lambda.
\]

The asymptotic probability that the points in the main row are in (the set denoted by) some regular expression is given by the product formed by replacing each $\xxx$ by $\px$, each $\yyy$ by $\py$ and each $\zzz$ by $\pz$, and replacing each $\xxx^\STAR$, $\yyy^\STAR$ and $\zzz^\STAR$ by $1/(1-\px)$, $1/(1-\py)$ and $1/(1-\pz)$, respectively.
For example, the asymptotic probability that the arrangement of points in the main row is in $\zzz^\STAR \xxx_Q$ equals $\px/(1-\pz)$.

\medskip
\noindent
\textbf{The number of griddings.}
Recall, from the introduction to tee dancing on page~\pageref{defDiagDancing}, that a monotone sequence of points can dance.
The points not at one of the ends of the sequence can dance between $\CR$ and $\CB$ through~$C$.
However, the first and last of these points may only be able to dance between $\CR$ and $C$ or between $\CB$ and~$C$, but not from $\CR$ to~$\CB$.

Each point that can dance between $\CR$ and $\CB$ contributes two to the number of 
griddings.
However, a point that can only dance between $C$ and either $\CR$ or $\CB$ contributes just one additional gridding.
In our analysis below, we determine the contribution to the number of griddings from points in the main row, which we denote $m_1$,
and also the contribution to the number of griddings from points in the main column, which we denote $m_2$.
Including the original gridding, the total number of distinct griddings is then $m_1+m_2+1$.
For example, in the gridded permutation towards the centre of Figure~\ref{figM1M2}, we have $m_1=3$ (from points in the main row dancing down and to the left) and $m_2=2$ (from points in the main column dancing to the right and up), yielding a total of six distinct griddings.

\begin{figure}[ht]
  \centering
  \begin{tikzpicture}[scale=0.225]
  \plotgriddedperm{7}{6,2,3,1,4,5,7}{6}{1,5}
  \circpt{3}{3} \circpt{5}{4} \circpt{6}{5}
  \end{tikzpicture}
  \qquad
  \begin{tikzpicture}[scale=0.225]
  \plotgriddedperm{7}{6,2,3,1,4,5,7}{6}{1,4}
  \circpt{3}{3} \circpt{5}{4} \circpt{6}{5}
  \end{tikzpicture}
  \qquad
  \begin{tikzpicture}[scale=0.225]
  \plotgriddedperm{7}{6,2,3,1,4,5,7}{5}{1,4}
  \circpt{3}{3} \circpt{5}{4} \circpt{6}{5}
  \end{tikzpicture}
  \quad\raisebox{20pt}{$\Longleftarrow$}\quad
  \begin{tikzpicture}[scale=0.225]
  \plotgriddedperm{7}{6,2,3,1,4,5,7}{5}{1,3}
  \circpt{3}{3} \circpt{5}{4} \circpt{6}{5}
  \end{tikzpicture}
  \quad\raisebox{20pt}{$\Longrightarrow$}\quad
  \begin{tikzpicture}[scale=0.225]
  \plotgriddedperm{7}{6,2,3,1,4,5,7}{4}{1,3}
  \circpt{3}{3} \circpt{5}{4} \circpt{6}{5}
  \end{tikzpicture}
  \qquad
  \begin{tikzpicture}[scale=0.225]
  \plotgriddedperm{7}{6,2,3,1,4,5,7}{4}{1,2}
  \circpt{3}{3} \circpt{5}{4} \circpt{6}{5}
  \end{tikzpicture}
  \caption{The six griddings of 6231457 in \!\gcthree2{-1,1}{1}{1}\!; the circled points can dance}\label{figM1M2}
\end{figure}

\medskip
\noindent
\textbf{Labelled points.}
Given a constrained gridded permutation,
let $Q$ be the lowest point in the corner cell~$C$.
Our analysis depends on whether or not $Q$ is the peak point of each of the two peaks that form the tee.

If $c>1$ (so we have more than two columns), then let $S$ be the lowest point in the main row that is in a cell other than $C$ or~$\CR$.
Similarly, if $r>1$, then let $T$ be the rightmost point in the main column that is in a cell other than $C$ or~$\CB$.

We consider three cases.
In the first, $Q$ is the peak point of both of the peaks that form the tee.
In the second, $Q$ is not the peak point of either of the peaks that form the tee.
Finally, in the third, $Q$ is the peak point of just one of the two peaks. 

\subsubsection*{\normalsize Case~1: $Q$ is the peak point of both of the peaks that form the tee}

In Case~1, point $Q$ is lower than every point in $\CR$ and to the right of every point in~$\CB$.
If $S$ is above~$Q$, then $Q$ is adjacent to the row divider below $C$ and can dance vertically into~$\CB$, contributing one to the number of griddings.
Similarly, if $T$ is to the left of~$Q$, then $Q$ is adjacent to the column divider to the right of $C$ and can dance horizontally into~$\CR$, again contributing one to the number of griddings.

Let $R$ be the second lowest point in the corner cell~$C$.
That is, $R$ is the point immediately above $Q$ in the corner cell.
Points $R$, $S$  and $T$ are the controllers, because they control which points can dance.

Let $k_1$ be the number of points in $\CR$ below both $R$ and $S$ (or if $c=1$, the number below $R$).
These $k_1$ points are above~$Q$, and thus can all dance through the tee to $\CB$, each contributing 2 to the number of griddings.
Similarly, let $k_2$ be the number of points in $\CB$ to the right of both $R$ and $T$ (or if $r=1$, the number to the right of $R$).
These $k_2$ points are to the left of~$Q$, and can also all dance through to $\CR$, again each contributing 2 to the number of griddings.
Note that each of $k_1$ and $k_2$ may be zero.
See Figure~\ref{figL3Case1} for two gridded permutations satisfying the conditions of Case~1.

\begin{figure}[ht]
  \centering
  \begin{tikzpicture}[scale=0.225]
  \plotgriddedperm{16}{4,14,1,5,10,2,3,6,7,8,9,11,12,16,13,15}{10,14}{3,7}
  \node at (1.15,15.5) {\darkgrey\footnotesize$C$};
  \node at (11.6,15.4) {\darkgrey\footnotesize$\CR$};
  \node at (1.6,6.4) {\darkgrey\footnotesize$\CB$};
  \node at ( 9,9) {\footnotesize$Q$};
  \node at ( 6,11) {\footnotesize$R$};
  \node at (15.6,12) {\footnotesize$S$};
  \node at ( 8, 2.3) {\footnotesize$T$};
  \circpt{8}{6} \circpt{9}{7} \circpt{10}{8} \circpt{11}{9}
  \node at (8.5,-.8) {\footnotesize Main row: \textbf{1a} ($k_1=1$)};
  \node at (8.5,-2.6) {\footnotesize Main column: \textbf{1a} ($k_2=2$)};
  \node at (8.5,-4.4) {\footnotesize 9 griddings};
  \end{tikzpicture}
  $\qquad\qquad\quad$
  \begin{tikzpicture}[scale=0.225]
  \plotgriddedperm{16}{3,1,15,4,14,5,6,7,2,8,9,12,13,16,10,11}{9,14}{2,6}
  \node at (1.15,15.5) {\darkgrey\footnotesize$C$};
  \node at (10.6,15.4) {\darkgrey\footnotesize$\CR$};
  \node at (1.6,5.4) {\darkgrey\footnotesize$\CB$};
  \node at ( 7, 8) {\footnotesize$Q$};
  \node at ( 6,15) {\footnotesize$R$};
  \node at (15.6,9) {\footnotesize$S$};
  \node at ( 8, 1.3) {\footnotesize$T$};
  \node at (11.2,13) {\footnotesize$U$};
  \circpt{8}{7} \circpt{10}{8} \circpt{11}{9} \circpt{12}{12}
  \node at (8.5,-.8) {\footnotesize Main row: \textbf{1b} ($k_1=2$)};
  \node at (8.5,-2.6) {\footnotesize Main column: \textbf{1c} ($k_2=0$)};
  \node at (8.5,-4.4) {\footnotesize 7 griddings};
  \end{tikzpicture}
  \caption{$L_3$ Case~1: \!\gcthree{3}{-1,1,1}{1}{1}\!-gridded permutations;
  the circled points can dance}\label{figL3Case1}
\end{figure}

In Case~1, the possible ordering of the points in the main row, reading from the bottom, is given by the regular expression $\zzz^\STAR \xxx_Q$, the only points  below $Q$ (if any) being in cells other than $C$ and~$\CR$.
So, the asymptotic probability that the points in the main row of a gridded permutation satisfy the conditions of Case~1 is given by
    \[
    p_{\textbf{1}} \defeq
    \prob{\text{Case~1}} \eq
    \frac{\px}{1-\pz}
    \eq \frac{\ppx}{1-\ppz}
    .
    \]
An analogous result applies for the main column.
For both the main row and the main column, we have three subcases.
We analyse the main row, reading its points from the bottom.
The analysis of the main column is analogous (reading its points from the right).
The three subcases are as follows:

\begin{itemize}
    \item[\textbf{1a.}] 
    $S$,~if it exists, is above~$Q$, which is the lowest point in the main row.
    $S$~(if it exists) may be above or below~$R$.
    There is no point in $\CR$ that is above $S$ and below~$R$.
    See the left of Figure~\ref{figL3Case1} for examples in both the main row and the main column.

    $Q$~can dance vertically into~$\CB$ and $k_1$ points can dance from $\CR$ to $\CB$ via $C$. Thus $m_1=2k_1+1$.

    The possible ordering of the points in the main row, reading from the bottom, is given by the regular expression $\xxx_Q \yyy^\STAR \zzz^\STAR \xxx_R$.
    So, for each $i\geqs0$, we have
    \[
    p_{\textbf{1a}}(i) \defeq
    \prob{\text{Case \textbf{1a} and $k_1=i$}}
    \eq \frac{\px^2 \,\py^i}{1-\pz}
    \eq \frac{(\ppx)^2 \,\ppy^i}{1-\ppz}
    .
    \]

    \item[\textbf{1b.}] 
    $S$~is above $Q$ and below~$R$, and at least one point in $\CR$ is above $S$ and below~$R$.
    Again, $Q$~is the lowest point in the main row.
    Let $U$ be the lowest of the points in $\CR$ above $S$ and below~$R$.
    Note that $U$ is not one of the $k_1$ points in $\CR$ below both $R$ and~$S$.
    Other points not in $C$ or $\CR$ may lie below~$U$.
    See the right of Figure~\ref{figL3Case1} for an example.

    $Q$~can dance vertically into~$\CB$, contributing one to the number of griddings.
    And $U$ can dance horizontally into the corner cell, but not from the corner into~$\CB$, so this contributes another gridding.
    Thus $m_1=2k_1+2$.

    The possible ordering of the points in the main row, reading from the bottom, is given by the regular expression $\xxx_Q \yyy^\STAR \zzz_S \zzz^\STAR \yyy_U$.
    So, for each $i\geqs0$, we have
    \[
    p_{\textbf{1b}}(i) \defeq
    \prob{\text{Case \textbf{1b} and $k_1=i$}}
    \eq \frac{\px \,\py^{i+1} \,\pz}{1-\pz}
    \eq \frac{(\ppx) \,\ppy^{i+1} \,\ppz}{1-\ppz}
    .
    \]

    \item[\textbf{1c.}]
    $S$~is below~$Q$.
    So $Q$ can't dance vertically into~$\CB$, and $k_1=0$, since every point in $\CR$ is above $Q$ and hence also above~$S$.
    Thus $m_1=0$.
    See the right of Figure~\ref{figL3Case1} for an example of this in the main \emph{column},
    in which the controller $T$ is to the right of~$Q$, so $Q$ can't dance horizontally into~$\CR$, and $m_2=k_2=0$.

    The possible ordering of the points in the main row, reading from the bottom, is given by the regular expression $\zzz_S \zzz^\STAR \xxx_Q$.
    Thus,
    \[
    p_{\textbf{1c}} \defeq
    \prob{\text{Case \textbf{1c} and $k_1=0$}}
    \eq \frac{\px \,\pz}{1-\pz}
    \eq \frac{(\ppx) \,\ppz}{1-\ppz}
    .
    \]

\end{itemize}

Table~\ref{tblL3Case1} gives the total number of griddings for each possibility in Case~1.

\begin{table}[ht]
  \centering
  \small
  \renewcommand{\arraystretch}{1.25}
  \begin{tabular}{ll|c|c|c|}
                            &                       & Main row \textbf{1a}  & Main row \textbf{1b}  & Main row \textbf{1c}  \\
                            &                       & \darkgrey$m_1=2k_1+1$ & \darkgrey$m_1=2k_1+2$ & \darkgrey$m_1=0$      \\\hline
    Main column \textbf{1a} & \darkgrey$m_2=2k_2+1$ & $2k_1+2k_2+3$         & $2k_1+2k_2+4$         & $2k_2+2$              \\\hline
    Main column \textbf{1b} & \darkgrey$m_2=2k_2+2$ & $2k_1+2k_2+4$         & $2k_1+2k_2+5$         & $2k_2+3$              \\\hline
    Main column \textbf{1c} & \darkgrey$m_2=0$      & $2k_1+2$              & $2k_1+3$              & $1$                   \\\hline
  \end{tabular}
  \caption{The number of griddings for each combination of subcases in Case~1}\label{tblL3Case1}
\end{table}

We now the calculate, for each $\ell\geqs1$, the probability $P_{\textbf{1}}(\ell) := \prob{\text{Case \textbf{1} and $\ell$ griddings}}$.

Let $q_{\textbf{1a}}(i)$, $q_{\textbf{1b}}(i)$ and $q_{\textbf{1c}}$ be the subcase probabilities for the main column,
formed from $p_{\textbf{1a}}(i)$, $p_{\textbf{1b}}(i)$ and $p_{\textbf{1c}}$
by replacing $\px$, $\py$, $\pz$ with
$\qx$, $\qy$, $\qz$,
respectively.

Then, from Table~\ref{tblL3Case1}, we have
\begin{align*}
P_{\textbf{1}}(1) & \eq p_{\textbf{1c}} \, q_{\textbf{1c}} , \\[3pt]
P_{\textbf{1}}(2) & \eq p_{\textbf{1a}}(0) \, q_{\textbf{1c}} \:+\: p_{\textbf{1c}} \, q_{\textbf{1a}}(0) , \\[3pt]
P_{\textbf{1}}(3) & \eq p_{\textbf{1a}}(0) \, q_{\textbf{1a}}(0) \:+\: p_{\textbf{1b}}(0) \, q_{\textbf{1c}} \:+\: p_{\textbf{1c}} \, q_{\textbf{1b}}(0) , \\[3pt]
P_{\textbf{1}}(\ell) & \eq \phantom{\:+\:} p_{\textbf{1a}}\big(\tfrac{\ell-2}2\big) \, q_{\textbf{1c}} \:+\: p_{\textbf{1c}} \, q_{\textbf{1a}}\big(\tfrac{\ell-2}2\big) \\
                       & \phantom{\eq} \:+\: \sum_{i=0}^{(\ell-4)/2}
                       \left( p_{\textbf{1a}}(i) \, q_{\textbf{1b}}\big(\tfrac{\ell-4}2-i\big)
                       \:+\: p_{\textbf{1b}}\big(\tfrac{\ell-4}2-i\big) \, q_{\textbf{1a}}(i) \right)
                       , \quad \text{$\ell\geqs4$, even} , \\[3pt]
P_{\textbf{1}}(\ell) & \eq \phantom{\:+\:} p_{\textbf{1b}}\big(\tfrac{\ell-3}2\big) \, q_{\textbf{1c}} + p_{\textbf{1c}} \, q_{\textbf{1b}}\big(\tfrac{\ell-3}2\big) \\
                       & \phantom{\eq} \:+\: \sum_{i=0}^{(\ell-3)/2} p_{\textbf{1a}}(i) \, q_{\textbf{1a}}(\tfrac{\ell-3}2-i)
                                       \:+\: \sum_{i=0}^{(\ell-5)/2} p_{\textbf{1b}}(i) \, q_{\textbf{1b}}(\tfrac{\ell-5}2-i)
                       , \quad \text{$\ell\geqs5$, odd} .
\end{align*}

After simplification, facilitated by using a computer algebra system, this yields
\[
P_{\textbf{1}}(\ell) \eq
\begin{cases}
\dfrac{\lambda^{(\ell-3)/2}\, \px\, \qx\, \big((\ell-1)\,\px\,\qx + (\ell+1)\,\lambda\,\pz\,\qz \big) }{2\, (1-\pz)\, (1-\qz)} , & \text{$\ell\geqs1$, odd} , \\[9pt]
\dfrac{\ell\, \lambda^{(\ell-2)/2}\, \px\, \qx\, (\px\, \qz + \pz\, \qx)}{2\, (1-\pz)\, (1-\qz)} , & \text{$\ell\geqs2$, even} .
\end{cases}
\]

\subsubsection*{\normalsize Case~2: $Q$ is not the peak point of either of the peaks that form the tee}

In Case~2, there is a point in $\CR$ below $Q$ and a point in $\CB$ to the right of $Q$.
So $Q$ can't dance into either $\CB$ or $\CR$.
Points $Q$, $S$  and $T$ are the controllers, controlling which points can dance.

Let $k_1$ be the number of points in $\CR$ below both $Q$ and $S$ (or if $c=1$, the number below $Q$).
These $k_1$ points can all dance through the tee to $\CB$, each contributing 2 to the number of griddings.
Similarly, let $k_2$ be the number of points in $\CB$ to the right of both $Q$ and $T$ (or if $r=1$, the number to the right of $Q$).
These $k_2$ points can also all dance through to $\CR$, again each contributing 2 to the number of griddings.
See Figure~\ref{figL3Case2} for a gridded permutation satisfying the conditions of Case~2.
Again, $k_1$ and $k_2$ may be zero.

\begin{figure}[ht]
  \centering
  \begin{tikzpicture}[scale=0.225]
  \plotgriddedperm{16}{3,1,14,4,11,5,2,6,7,8,9,12,13,16,10,15}{9,14}{2,7}
  \node at (1.15,15.5) {\darkgrey\footnotesize$C$};
  \node at (10.6,15.4) {\darkgrey\footnotesize$\CR$};
  \node at (1.6,6.4) {\darkgrey\footnotesize$\CB$};
  \node at ( 6,12) {\footnotesize$Q$};
  \node at (15.6, 9) {\footnotesize$S$};
  \node at ( 6, 1.3) {\footnotesize$T$};
  \circpt{6}{5} \circpt{8}{6} \circpt{9}{7} \circpt{10}{8} \circpt{11}{9}
  \node at (8.5,-.8) {\footnotesize Main row: \textbf{2a} ($k_1=2$)};
  \node at (8.5,-2.6) {\footnotesize Main column: \textbf{2b} ($k_2=2$)};
  \node at (8.5,-4.4) {\footnotesize 10 griddings};
  \end{tikzpicture}
  \caption{$L_3$ Case~2: a \!\gcthree{3}{-1,1,1}{1}{1}\!-gridded permutation;
  the circled points can dance}\label{figL3Case2}
\end{figure}

For both the main row and the main column, we have two subcases.
We analyse the main row, the analysis of the main column being analogous.

\begin{itemize}
    \item[\textbf{2a.}] 
    There is no point in $\CR$ above $S$ and below~$Q$.
    $S$~may be above or below $Q$, or $S$ may not exist.
    Since there is a point in $\CR$ below $Q$ (by the definition of Case~2), in this subcase (only) we know that $k_1>0$.

    The $k_1$ points in $\CR$ below both $Q$ and $S$ (if it exists) can all dance through to~$\CB$.
    Thus $m_1=2k_1$.

    The possible ordering of the points in the main row, reading from the bottom, is given by the regular expression $\yyy\yyy^\STAR \zzz^\STAR \xxx_Q$.
    So, for each $i\geqs1$, we have
    \[
    p_{\textbf{2a}}(i) \defeq
    \prob{\text{Case \textbf{2a} and $k_1=i$}}
    \eq \frac{\px \,\py^i}{1-\pz}
    \eq \frac{(\ppx) \,\ppy^i}{1-\ppz}
    .
    \]

    \item[\textbf{2b.}] 
      $S$ is below $Q$ with at least one point in $\CR$ above $S$ and below $Q$.
      Let $U$ be the lowest of these points.
      Note that $U$ is not one of the $k_1$ points in $\CR$ that are below both $Q$ and~$S$.
      Other points not in $C$ or $\CR$ may lie below $U$.

      $U$ can dance horizontally into the corner cell, but not from the corner into~$\CB$.
      Thus $m_1=2k_1+1$.

    The possible ordering of the points in the main row, reading from the bottom, is given by the regular expression $\yyy^\STAR \zzz_S \zzz^\STAR \yyy_U$.
    So, for each $i\geqs0$, we have
    \[
    p_{\textbf{2b}}(i) \defeq
    \prob{\text{Case \textbf{2b} and $k_1=i$}}
    \eq \frac{\py^{i+1} \,\pz}{1-\pz}
    \eq \frac{\ppy^{i+1} \,\ppz}{1-\ppz}
    .
    \]
\end{itemize}

Table~\ref{tblL3Case2} gives the total number of griddings for each possibility in Case~2.

\begin{table}[ht]
  \centering
  \small
  \renewcommand{\arraystretch}{1.25}
  \begin{tabular}{ll|c|c|c|}
                            &                       & Main row \textbf{2a}  & Main row \textbf{2b}  \\
                            &                       & \darkgrey$m_1=2k_1>0$   & \darkgrey$m_1=2k_1+1$ \\\hline
    Main column \textbf{2a} & \darkgrey$m_2=2k_2>0$   & $2k_1+2k_2+1$         & $2k_1+2k_2+2$         \\\hline
    Main column \textbf{2b} & \darkgrey$m_2=2k_2+1$ & $2k_1+2k_2+2$         & $2k_1+2k_2+3$         \\\hline
  \end{tabular}
  \caption{The number of griddings for each combination of subcases in Case~2}\label{tblL3Case2}
\end{table}

We now calculate, for each $\ell\geqs1$, the probability $P_{\textbf{2}}(\ell) = \prob{\text{Case \textbf{2} and $\ell$ griddings}}$.

Let $q_{\textbf{2a}}(i)$ and $q_{\textbf{2b}}(i)$ be the subcase probabilities for the main column,
formed from $p_{\textbf{2a}}(i)$ and $p_{\textbf{2b}}(i)$
by replacing $\px$, $\py$, $\pz$ with
$\qx$, $\qy$, $\qz$,
respectively.

Then, from Table~\ref{tblL3Case2},
and recalling that in subcase \textbf{2a} there is always at least one point that can dance,
we have
\begin{align*}
P_{\textbf{2}}(3) & \eq p_{\textbf{2b}}(0) \, q_{\textbf{2b}}(0) , \\[3pt]
P_{\textbf{2}}(\ell) & \eq \sum_{i=1}^{(\ell-2)/2}
                        \left( p_{\textbf{2a}}(i) \, q_{\textbf{2b}}\big(\tfrac{\ell-2}2-i\big)
                        \:+\: p_{\textbf{2b}}\big(\tfrac{\ell-2}2-i\big) \, q_{\textbf{2a}}(i) \right)
                        , \quad \text{$\ell\geqs4$, even} , \\[3pt]
P_{\textbf{2}}(\ell) & \sum_{i=1}^{(\ell-1)/2-1} p_{\textbf{2a}}(i) \, q_{\textbf{2a}}(\tfrac{\ell-1}2-i)
                       \:+\: \sum_{i=0}^{(\ell-3)/2} p_{\textbf{2b}}(i) \, q_{\textbf{2b}}(\tfrac{\ell-3}2-i)
                       , \quad \text{$\ell\geqs5$, odd} .
\end{align*}

After simplification, this yields
\[
P_{\textbf{2}}(\ell) \eq
\begin{cases}
0 , & \ell=1 , \\[4pt]
\dfrac{(\ell-2)\, \lambda^{\ell/2}\, (\px\, \qz + \pz\, \qx)}{2\, (1-\pz)\, (1-\qz)} , & \text{$\ell\geqs2$, even} , \\[9pt]
\dfrac{\lambda^{(\ell-1)/2}\, \big((\ell-3)\,\px\,\qx + (\ell-1)\,\lambda\,\pz\,\qz \big) }{2\, (1-\pz)\, (1-\qz)} , & \text{$\ell\geqs3$, odd} .
\end{cases}
\]

\subsubsection*{\normalsize Case~3: $Q$ is the peak point of just one of the two peaks that form the tee}

Case~3 combines Case~1 for the main row and Case~2 for the main column, or vice versa.

Suppose the main row satisfies Case~1 and the main column satisfies Case~2, as in Figure~\ref{figL3Case3}.
Then $Q$ can't dance into either $\CR$ or $\CB$ because of the points to its right in~$\CB$.
Neither can the points in $\CR$ dance, because $Q$ is below them. So $m_1=0$.
On the other hand, the Case~2 analysis of the main column is still valid, the $k_2$ points in $\CB$ to the right of $Q$ being able to dance through to~$\CR$.

The situation is analogous if the main row satisfies Case~2 and the main column satisfies Case~1.
Table~\ref{tblL3Case3} gives the total number of griddings for each possibility in Case~3.

\begin{figure}[ht]
  \centering
  \begin{tikzpicture}[scale=0.225]
  \plotgriddedperm{16}{3,1,14,4,5,2,8,6,7,9,10,11,13,16,12,15}{9,14}{2,7}
  \node at (1.15,15.5) {\darkgrey\footnotesize$C$};
  \node at (10.6,15.4) {\darkgrey\footnotesize$\CR$};
  \node at (1.6,6.4) {\darkgrey\footnotesize$\CB$};
  \node at ( 6,9) {\footnotesize$Q$};
  \node at (15.6, 11) {\footnotesize$S$};
  \node at ( 5, 1.3) {\footnotesize$T$};
  \circpt{8}{6} \circpt{9}{7}
  \node at (8.5,-.8) {\footnotesize Main row: \textbf{1} ($k_1=0$)};
  \node at (8.5,-2.6) {\footnotesize Main column: \textbf{2a} ($k_2=2$)};
  \node at (8.5,-4.4) {\footnotesize 5 griddings};
  \end{tikzpicture}
  \caption{$L_3$ Case~3: a \!\gcthree{3}{-1,1,1}{1}{1}\!-gridded permutation;
  the circled points can dance}\label{figL3Case3}
\end{figure}

\begin{table}[ht]
  \centering
  \small
  \renewcommand{\arraystretch}{1.25}
  \begin{tabular}{ll|c|c|c|c|}
                            &                       & Main row \textbf{1} & Main row \textbf{2a}  & Main row \textbf{2b}  \\
                            &                       & \darkgrey$m_1=0$    & \darkgrey$m_1=2k_1$   & \darkgrey$m_1=2k_1+1$ \\\hline
    Main column \textbf{1}  & \darkgrey$m_2=0$      &                     & $2k_1+1$              & $2k_1+2$              \\\hline
    Main column \textbf{2a} & \darkgrey$m_2=2k_2$   & $2k_2+1$            &                       &                       \\\hline
    Main column \textbf{2b} & \darkgrey$m_2=2k_2+1$ & $2k_2+2$            &                       &                       \\\hline
  \end{tabular}
  \caption{The number of griddings for each combination of subcases in Case~3}\label{tblL3Case3}
\end{table}

Thus, for each $\ell\geqs1$, the probability $P_{\textbf{3}}(\ell) = \prob{\text{Case \textbf{3} and $\ell$ griddings}}$ is given by
\[
P_{\textbf{3}}(\ell) \eq
\begin{cases}
0 , & \ell=1 , \\[4pt]
p_{\textbf{2b}}(\tfrac{\ell-2}2) \, q_{\textbf{1}} \:+\: p_{\textbf{1}} \, q_{\textbf{2b}}(\tfrac{\ell-2}2) \;=\;
                        \dfrac{\lambda^{\ell/2}\, (\px\, \qz + \pz\, \qx)}{(1-\pz)\, (1-\qz)}
                        , & \text{$\ell\geqs2$, even} , \\[9pt]
p_{\textbf{2a}}(\tfrac{\ell-1}2) \, q_{\textbf{1}} \:+\: p_{\textbf{1}} \, q_{\textbf{2a}}(\tfrac{\ell-1}2) \:\;=\;
                        \dfrac{2\, \lambda^{(\ell-1)/2}\, \px\, \qx}{(1-\pz)\, (1-\qz)}
                        , & \text{$\ell\geqs3$, odd} ,
\end{cases}
\]
where $q_{\textbf{1}}$ is the probability that the main column satisfies Case~1,
formed from $p_{\textbf{1}}$
by replacing $\px$ and $\pz$ with
$\qx$ and $\qz$,
respectively.

We can now combine the three cases.
The asymptotic probability that the underlying permutation of a gridded permutation has exactly $\ell$ griddings is
\[
P_\ell \eq P_{\textbf{1}}(\ell) + P_{\textbf{2}}(\ell) + P_{\textbf{3}}(\ell) .
\]
After considerable simplification, facilitated by using a computer algebra system, we have
\[
P_\ell \eq
\begin{cases}
\dfrac{\px\, \pz\, \qx\, \qz}{(1-\pz)\, (1-\qz)} , & \ell=1 ,\\[10.5pt]
\dfrac{\ell\, \lambda^{(\ell-2)/2} \, (\lambda + \px\, \qx) \, (\px\, \qz + \pz\, \qx)}{2\, (1-\pz)\, (1-\qz)} , & \text{$\ell\geqs2$, even} , \\[9pt]
\dfrac{\ell\, \lambda^{(\ell-3)/2} \, (\lambda + \px\, \qx) \, (\px\, \qx + \lambda\, \pz\, \qz)}{2\, (1-\pz)\, (1-\qz)} , & \text{$\ell\geqs3$, odd} .
\end{cases}
\]

Finally, after further simplification, this gives us the correction factor for the $L_3$ corner type:
\[
\kappa(L_3) \eq \sum_{\ell\geqs1}P_\ell/\ell
\eq
\frac{\lambda\,(1-\lambda)}{(1-\pz)\, (1-\qz)}
\eq
\frac{\lambda\,(1-\lambda)}{(1-(c-1)\lambda)\, (1-(r-1)\lambda)} .
\]

\subsubsection*{\normalsize Corner type \texorpdfstring{$T_4$}{T4}}

\begin{center}
{\setgcptsize{.125}
\setgcextra{\gchdots{-2}{2}\gchdots{3}{2}\gcvdots{1}{0}}\gctwo3{-1,1,-1}{0,-1} \quad
\raisebox{3.67pt}{\setgcextra{\gchdots{-2}{2}\gchdots{3}{2}\gcvdots{1}{0}\gcvdots{1}{5}}\gcthree3{0,1}{-1,1,-1}{0,-1}}}
\end{center}

This corner type has one peak and one tee.
It can't occur in an \textsf{L}-shaped class.

Dancing at the peak depends on the points adjacent to the column divider
to the right of the corner,
whereas
tee dancing depends on the points adjacent to the column divider
to the left of the corner and points adjacent to the row divider below the corner.
So these
are asymptotically independent.
Thus, from the analysis for $L_1$ and $L_3$, and by Observation~\ref{obsIndepDancing}, we have
\[
\kappa(T_4)
\eq \kappa(\rot{L_1}) \, \kappa(L_3)
\eq \frac12 \! \left(1+\frac{r \alpha \lambda}{\alpha + \beta}\right) \frac{\lambda\,(1-\lambda)}{(1-(c-1)\lambda)\, (1-(r-1)\lambda)} .
\]

\subsubsection*{\normalsize Corner type \texorpdfstring{$X_4$}{X4}}

\begin{center}
{\setgcptsize{.125}\setgcextra{\gchdots{-2}{2}\gchdots{3}{2}\gcvdots{1}{0}\gcvdots{1}{5}}\gcthree3{0,-1}{-1,1,-1}{0,-1}}
\end{center}

This corner type has two tees.
It can't occur in an \textsf{L}-shaped or \textsf{T}-shaped class.

Dancing at one of the tees depends on the points adjacent to the column divider
to the right of the corner and the row divider above the corner,
whereas
dancing at the other tee depends on the points adjacent to the column divider
to the left of the corner and points adjacent to the row divider below the corner.
So these
are asymptotically independent.
Thus, from the analysis for $L_3$, and by Observation~\ref{obsIndepDancing}, we have
\[
\kappa(X_4)
\eq \kappa(L_3)^2
\eq \left( \frac{\lambda\,(1-\lambda)}{(1-(c-1)\lambda)\, (1-(r-1)\lambda)} \right)^{\!2}.
\]

\subsection{Comparison of corner types}\label{sectCornerComparison}

We can compare corner types to see which leads to the larger asymptotic growth.
The correction factors induce a partial order on the corner types, in which
corner type $\tau_1$ dominates corner type $\tau_2$
if $\kappa(\tau_1)>\kappa(\tau_2)$ for all values of $r$ and~$c$.
If this is the case, then
a class with smaller asymptotic size is the result whenever
a corner of type $\tau_1$ is replaced with one of type $\tau_2$ without changing the dimensions or the number of non-corner peaks.

The Hasse diagram of the corner type poset is shown in
Figure~\ref{figPoset}.
For a dashed edge,
strict ordering is only satisfied  under the condition specified, with the correction factors being equal otherwise.
For example, if $r=1$, then $\kappa(T_2)=\kappa(L_4)$.

\begin{figure}[ht]
  \centering
  \begin{tikzpicture}[scale=0.225]
    \draw[thick] (0,35)--(0,25);
    \draw[thick,dashed] (-10,25)--(0,30)--(10,25);
    \draw[thick] (-10,25)--(-5,20);
    \draw[thick] (10,25)--(5,20);
    \draw[thick] (-12.5,12.5)--(0,25)--(12.5,12.5);
    \draw[thick,dashed] (-5,20)--(0,15)--(5,20);
    \draw[thick] (0,15)--(-5,10)--(-5,5)--(5,10);
    \draw[thick] (0,15)--(5,10)--(5,5)--(-5,10);
    \draw[thick] (-12.5,12.5)--(0,0)--(12.5,12.5);
    \node at (-6,29-.3) {\footnotesize$c>1$};
    \node at (6,29-.3) {\footnotesize$r>1$};
    \node at (-4-.2,16+.8) {\footnotesize$r>1$};
    \node at (4+.2,16+.8) {\footnotesize$c>1$};
    \draw[radius=1.25,fill=white] (0,35) circle;        \node at (0,35) {\footnotesize$L_0$};
    \draw[radius=1.25,fill=white] (0,30) circle;        \node at (0,30) {\footnotesize$L_7$};
    \draw[radius=1.25,fill=white] (-10,25) circle;      \node at (-10,25) {\footnotesize$\rot{L_1}$};
    \draw[radius=1.25,fill=white] (0,25) circle;        \node at (0,25) {\footnotesize$X_0$};
    \draw[radius=1.25,fill=white] (10,25) circle;       \node at (10,25) {\footnotesize$L_1$};
    \draw[radius=1.25,fill=white] (-5,20) circle;       \node at (-5,20) {\footnotesize$T_2$};
    \draw[radius=1.25,fill=white] (5,20) circle;        \node at (5,20) {\footnotesize$\rot{T_2}$};
    \draw[radius=1.25,fill=white] (0,15) circle;        \node at (0,15) {\footnotesize$L_4$};
    \draw[radius=1.25,fill=white] (-12.5,12.5) circle;  \node at (-12.5,12.5) {\footnotesize$T_5$};
    \draw[radius=1.25,fill=white] (12.5,12.5) circle;   \node at (12.5,12.5) {\footnotesize$\rot{T_5}$};
    \draw[radius=1.25,fill=white] (-5,10) circle;       \node at (-5,10) {\footnotesize$L_3$};
    \draw[radius=1.25,fill=white] (5,10) circle;        \node at (5,10) {\footnotesize$X_7$};
    \draw[radius=1.25,fill=white] (-5,5) circle;        \node at (-5,5) {\footnotesize$T_4$};
    \draw[radius=1.25,fill=white] (5,5) circle;         \node at (5,5) {\footnotesize$\rot{T_4}$};
    \draw[radius=1.25,fill=white] (0,0) circle;         \node at (0,0) {\footnotesize$X_4$};
  \end{tikzpicture}
  \caption{The 
  poset of corner types}\label{figPoset}
\end{figure}

\subsection{Constrained gridded permutations}\label{sectLTXConstrained}

Recall that, if $\Grid(M)$ is a connected one-corner class, then an $M$\!-gridded permutation $\sigma^\#$ is $M$\!-constrained if
\begin{itemize}
  \item[(a)] every $M$\!-gridding of its underlying permutation $\sigma$ is the result of zero or more points of $\sigma^\#$ dancing at a peak or diagonally or through a tee,
  and
  \item[(b)] in every $M$\!-gridding of $\sigma$, each non-blank cell contains at least two points.
\end{itemize}

Suppose a gridded permutation satisfies Part~(a) of this definition.
Then, in order to satisfy Part~(b),
it is sufficient that, in each non-blank cell, there are at least two points that can't dance,
since these points will be in the same cell in all griddings of the underlying permutation.

Non-corner peak points can always dance.
However, the set of points which can dance at the corner is determined by the position of the controllers.

For peak dancing ($L_1$) and diagonal dancing ($L_7$), each controller is an \emph{extremal} (lowest, highest, leftmost or rightmost) point in one of the cells.
For diagonal dancing, the points which can dance are those that are in one of the cells adjacent to the corner and lie between the relevant controller and cell divider.

For tee dancing ($L_3$), each controller is either an extremal point in a cell, or else is the second lowest or second highest point in the corner cell.
As with diagonal dancing, points between a controller and the corresponding cell divider can dance.
However, there may also be a single point that is not between the controller and cell divider that can dance.
In addition, one or both of the extremal points in the corner cell may also be able to dance.

\begin{figure}[ht]
  \centering
  \begin{tikzpicture}[scale=0.19]
  \plotgriddedperm{16}{1,15,13,2,11,3,4,5,6,7,9,10,14,16,12,8}{8,14}{4}
  \circpt{6}{3} \circpt{7}{4} \circpt{8}{5} \circpt{9}{6} \circpt{10}{7} \circpt{11}{9} \circpt{14}{16}
  \end{tikzpicture}
  \caption{A constrained \!\gctwo{3}{-1,1,-1}{1}\!-gridded permutation;
  the circled points can dance}\label{figLTXConstrained}
\end{figure}

See Figure~\ref{figLTXConstrained} for an example of a gridded permutation in a class with $L_3$ corner type and a non-corner peak:
in each non-blank cell there are at least two points that can't dance.
Note that we may need four points in a cell above a controller in the main row to guarantee two points that can't dance,
since one point just above the controller may be able to dance, and the highest point may be a peak point that can dance.

In this context, we make the following definition.
Given a gridded permutation,
if $C_1$ and $C_2$ are two distinct cells in the same row, then they are \emph{interlocked} if
\begin{itemize}
  \item $C_1$ contains at least four points above the second lowest point in $C_2$,
  \item $C_2$ contains at least four points above the second lowest point in $C_1$,
  \item $C_1$ contains at least four points below the second highest point in $C_2$, and
  \item $C_2$ contains at least four points below the second highest point in $C_1$.
\end{itemize}
Similarly, if $C_1$ and $C_2$ are two distinct cells in the same column, then they are interlocked if
\begin{itemize}
  \item $C_1$ contains at least four points to the right of the second leftmost point in $C_2$,
  \item $C_2$ contains at least four points to the right of the second leftmost point in $C_1$,
  \item $C_1$ contains at least four points to the left of the second rightmost point in $C_2$, and
  \item $C_2$ contains at least four points to the left of the second rightmost point in $C_1$.
\end{itemize}
Note that if two adjacent cells are interlocked, then their contents together don't form an increasing or decreasing sequence.

The following proposition gives sufficient conditions for a gridded permutation in a connected one-corner class to be constrained.

\begin{prop}\label{propLTXConstrained}
  Suppose $\Grid(M)$ is a connected one-corner class and $\sigma^\#\in\Gridhash(M)$ is such that
  each pair of cells in the main row is interlocked and
  each pair of cells in the main column is interlocked.
  Then $\sigma^\#$ is $M$\!-constrained.
\end{prop}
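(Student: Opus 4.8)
The plan is to follow the same template as the proof of Proposition~\ref{propSkinnyConstrained} for skinny classes, but with the additional structures (diagonals and tees) at the corner handled separately from the ordinary peaks along the main row and main column. First I would observe that the ``interlocked'' hypothesis immediately gives Part~(b): by the remark following the definition of interlocking, no two adjacent cells have contents forming a monotone sequence, and in any $M$\!-gridding of $\sigma$ at most the first and last point of each cell of $\sigma^\#$ can be regridded elsewhere, so each non-blank cell of any gridding still contains at least two points. The substance is Part~(a): I need to show every $M$\!-gridding of $\sigma$ arises from peak/diagonal/tee dancing of points of $\sigma^\#$.

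The key step is a \emph{divider-counting} argument, exactly as in Proposition~\ref{propSkinnyConstrained}. Fix an arbitrary $M$\!-gridding $\sigma'$ of $\sigma$. Along the main row, the row divider is fixed (it must separate the main row from the rest), so the main row behaves like a skinny class: I argue cell-by-cell from one end that each column divider in the main row must sit within one position of where it sits in $\sigma^\#$ — the interlocking condition forbids merging two cells' contents, and there are exactly (number of row cells) column dividers to place among the row cells, one fewer than the number of cells, so no divider can ``drift'' past more than the extremal points of a cell. The same applies to the main column with its row dividers. This pins every non-corner divider to within one point of its position in $\sigma^\#$, so the only freedom is: (i) the extremal points of each non-corner cell dancing across a peak (when the adjacent cell has opposite orientation) or not moving at all (same orientation), and (ii) the joint position of the column divider and row divider bounding the corner cell. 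The corner dividers are the crux: their joint position is exactly what diagonal dancing (for an $L_7$-type corner), tee dancing (for an $L_3$-type corner), and ordinary peak dancing (for $L_1$-type neighbours) parametrise, and I would invoke the case-by-case descriptions of these dancing moves from Sections~\ref{sectLTXDancing}, \ref{sectPeakCorners}, \ref{sectDiagonalCorners} and~\ref{sectTeeCorners} to conclude that any admissible placement of the corner dividers, consistent with the pinned non-corner dividers, is realised by some combination of these dances. The ``four points'' in the interlocking definition (rather than two) is needed precisely here: above or below a tee controller in the main row, one point may be able to dance and the top point may be a peak point, so four points are required to guarantee the two non-dancing points that Part~(b) needs, and also to ensure the controller structure is as the corner-type analysis assumes.

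I would organise the write-up as: (1) reduce Part~(b) to Part~(a) plus the interlocking remark; (2) state and prove the divider-pinning lemma for the main row and main column using interlocking and the ``one fewer divider than cells'' count; (3) deduce that in $\sigma'$ only the extremal points of non-corner cells and the two corner-bounding dividers can differ from $\sigma^\#$; (4) show the extremal non-corner discrepancies are exactly non-corner peak dances; (5) show the corner-divider discrepancy is realised by diagonal, tee, or corner-peak dancing, citing the relevant subsection for the actual corner type. The main obstacle is step~(5): verifying in full generality — across all eleven inequivalent corner types — that every joint position of the two corner dividers that is compatible with the monotonicity constraints and the pinned neighbouring dividers is achievable by the dances already catalogued, rather than requiring some exotic regridding. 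In practice this is handled by noting that the corner cell together with its non-blank neighbours forms one of the local configurations already dissected, and that the interlocking of the corner with each neighbour forces any valid regridding to move only points that the corresponding dancing move moves; but making this rigorous without simply re-deriving each corner type's analysis is the delicate part, and I expect the cleanest route is to prove it once for the three ``atomic'' corner shapes $L_1$, $L_7$, $L_3$ and then appeal to the fact (used implicitly throughout Section~\ref{sectLTX}) that dancing at distinct dividers around a more complex corner is independent, so a general corner decomposes into these atoms.
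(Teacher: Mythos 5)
Your proposal follows essentially the same route as the paper's proof: pin the non-corner dividers by an interlocking-plus-divider-counting argument in the style of Proposition~\ref{propSkinnyConstrained} (the paper does this by deleting one family of dividers to form skinny gridded permutations with a single non-monotone ``super cell''), defer the two corner-bounding dividers to the corner-type case analysis of Sections~\ref{sectPeakCorners}--\ref{sectTeeCorners}, and use the four-point clause of the interlocking definition to keep two immovable points in every cell. One caution: your opening justification of Part~(b), that ``at most the first and last point of each cell of $\sigma^\#$ can be regridded elsewhere,'' is false as stated for the cells adjacent to a diagonal or tee corner (all $k_1$ points of the neighbouring cell on the near side of the controller can dance away), so Part~(b) must instead rest on your later, correct observation that interlocking guarantees at least four points beyond each controller, of which at most two can dance.
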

\begin{proof}
Suppose we remove the row dividers from $\sigma^\#$, merging the cells in the main row into a single ``super cell''.
This forms a horizontal skinny gridded permutation in which the contents of one cell are not monotone.
Let us denote this~$\sigma^\|$.
Similarly, if we remove the column dividers, the result is a vertical skinny gridded permutation $\sigma^=$ with one non-monotone cell.
Because each pair of cells in the main row and each pair of cells in the main column is interlocked, there is no pair of adjacent cells
in either $\sigma^\|$ or $\sigma^=$
whose contents together form an increasing or decreasing sequence.

Thus, as in the proof of Proposition~\ref{propSkinnyConstrained} for skinny classes,
in any $M$\!-gridding of $\sigma$, by the monotonicity constraints, there must be
a column divider between each pair of adjacent monotone cells of $\sigma^\|$ that have the same orientation,
and also a column divider adjacent to each peak point of~$\sigma^\|$ (where peaks are formed from two monotone cells).
Similarly, there must be
a row divider between each pair of adjacent monotone cells of $\sigma^=$ that have the same orientation,
and also a row divider adjacent to each peak point of~$\sigma^=$.
Thus, in any $M$\!-gridding of $\sigma$, there must be a divider between each pair of adjacent non-corner cells of $\sigma^\#$ that have the same orientation
and a divider adjacent to each non-corner peak point.
So the only dancing possible across these dividers is by non-corner peak points.

What about the dividers adjacent to the corner cell?
For this, we require a case analysis of each corner type.
But this is exactly what is presented in Sections~\ref{sectPeakCorners}
to~\ref{sectTeeCorners}.
The definition of interlocking was chosen precisely so that the interlocking of cells in $\sigma^\#$ guarantees the existence of the controllers and that in each non-blank cell there are at least two points that can't dance.

By the interlocking of cells in $\sigma^\#$, every cell contains at least four points.
Each controller is either the first or second point in a cell, when considered in the appropriate direction.
Thus, in each case, the points which control the dancing at the corner are present, and the only possibilities for dancing are those described above.
So the possible griddings are restricted to those that result from zero or more points of $\sigma^\#$ dancing at a peak or diagonally or through a tee

The only points in a cell that may be able to dance are those that precede a controller and the one that immediately follows it.
The interlocking of cells in $\sigma^\#$ guarantees that there are at least four points in each cell following any controller, of which at most two (the first and last) can dance.
Thus each cell of any $M$\!-gridding of $\sigma$ contains at least two points.
\end{proof}

Finally, we prove that almost all gridded permutations in a connected one-corner class are constrained.

\begin{prop}\label{propLTXConstrainedAreGeneric}
If $\Grid(M)$ is a connected one-corner class, then almost all $M$\!-gridded permutations are $M$\!-constrained.
That is, if $\bshn$ is drawn uniformly at random from $\Gridhash_n(M)$, then
\[
\liminfty \prob{\text{$\bshn$ is $M$\!-constrained}} \eq 1 .
\]
\end{prop}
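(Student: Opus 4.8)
The plan is to derive this in exactly the way Proposition~\ref{propSkinnyConstrainedAreGeneric} was derived from Proposition~\ref{propSkinnyConstrained} in the skinny case. By Proposition~\ref{propLTXConstrained}, an $M$\!-gridded permutation is $M$\!-constrained whenever every pair of (non-blank) cells in the main row is interlocked and every pair of cells in the main column is interlocked. So it suffices to prove that the proportion of $\sigma^\#\in\Gridhash_n(M)$ for which some such pair fails to be interlocked tends to~$0$.

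First I would dispose of cells that are too small to be relevant. Fix $\veps>0$ smaller than half the least nonzero entry of the maximal distribution matrix $\Gamma_{\!M}$, and set $\delta$ to be the least nonzero entry of $\Gamma_{\!M}$ minus~$\veps$, so $\delta>0$. By Theorem~\ref{thmConnectedDistrib}, all but a vanishing proportion of the permutations in $\Gridhash_n(M)$ have at least $\delta n$ points in every non-blank cell. It therefore remains to bound the number of $M$\!-gridded permutations in which every non-blank cell has at least $\delta n$ points but some pair of cells in the main row or main column is not interlocked.

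For this, condition on the cell-count matrix $A$, with every nonzero entry at least $\delta n$. By Proposition~\ref{propCountAGriddings}, a gridded permutation drawn uniformly from $\Gridhash_A(M)$ is obtained by choosing, independently and uniformly, a vertical interleaving of the cells in each row and a horizontal interleaving of the cells in each column; in particular, for two cells $C_1,C_2$ in the main row, the relative vertical order of their points is uniform over the $\binom{\|C_1\|+\|C_2\|}{\|C_1\|}$ possibilities. Now fix such a pair and one of the four conditions defining interlocking, say ``$C_1$ contains at least four points above the second lowest point in $C_2$''. If it fails, then in the bottom-to-top reading of the $C_1\cup C_2$ points at most three $C_1$-points come after the second $C_2$-point, and an elementary count shows that there are only polynomially many such orderings, whereas $\binom{\|C_1\|+\|C_2\|}{\|C_1\|}\geqs\binom{2\delta n}{\delta n}$ grows exponentially. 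Hence the conditional probability that this condition fails is at most $\mathrm{poly}(n)\cdot(1-\eta)^n$ for some $\eta>0$ depending only on~$\delta$, and the same bound holds for each of the four conditions (by the symmetries exchanging $C_1$ with $C_2$ and reversing top/bottom) and for pairs of cells in the main column (by the left/right analogue using horizontal interleavings).

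Summing this bound over the $O\big((r+c)^2\big)$ relevant pairs of cells, over the four conditions, and over the polynomially many count matrices $A$ with all nonzero entries at least~$\delta n$, the proportion of $M$\!-gridded $n$-permutations with all non-blank cells of size at least $\delta n$ but some non-interlocked pair is at most $\mathrm{poly}(n)\cdot(1-\eta)^n\to0$. Together with the first step and Proposition~\ref{propLTXConstrained}, this gives $\liminfty\prob{\text{$\bshn$ is $M$\!-constrained}}=1$. I expect the only delicate point to be the combinatorial estimate of the third step: one must verify, for each of the four interlocking conditions and their column analogues, that a uniformly random interleaving of two cells of linear size satisfies the condition with exponentially high probability. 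The mild awkwardness is that interlocking is phrased using the \emph{second} extremal point of a cell and a threshold of \emph{four} points, so the set of ``bad'' orderings has to be described carefully in each case, although each reduces to the same elementary hypergeometric-tail estimate.
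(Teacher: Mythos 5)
Your proposal is correct and follows essentially the same route as the paper's proof: reduce to the interlocking conditions via Proposition~\ref{propLTXConstrained}, use Theorem~\ref{thmConnectedDistrib} to restrict to gridded permutations with linearly many points in every cell, and then show that for a uniformly random interleaving of two linear-sized cells each interlocking condition fails with only polynomially many bad orderings against an exponentially large total. Your version is somewhat more careful than the paper's (which only exhibits the hypergeometric ratio for a fixed $k$ and one condition), but the underlying argument is identical.
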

\begin{proof}
It is sufficient to prove that almost all $M$\!-gridded permutations satisfy the conditions of Proposition~\ref{propLTXConstrained}.
By Theorem~\ref{thmConnectedDistrib}, we know that the number of points in each non-blank cell of almost all $n$-point $M$\!-gridded permutations grows (linearly) with~$n$.

Suppose $C_1$ and $C_2$ are two cells in the same row, containing $\alpha n$ and $\beta n$ points respectively.
Then the probability that $C_1$ contains exactly $k$ points above the second lowest point in $C_2$ equals
\[
(\alpha n-k)\binom{\beta n+k-2}{k} \bigg/ \binom{(\alpha+\beta)n}{\alpha n}.
\]
For fixed $k$, this tends to zero as $n$ grows, since the numerator is polynomial in $n$, whereas the denominator grows exponentially.

Thus, given any pair of cells in same row or column, the probability that they are interlinked in an $n$-point $M$\!-gridded permutation converges to 1 as $n$ tends to infinity.
\end{proof}

\subsection{Beyond connected one-corner classes}\label{sectBeyondLTX}

We conclude with some brief notes on extending our approach beyond \textsf{L}, \textsf{T} and \textsf{X}-shaped classes.
Firstly, for any connected class, the asymptotic distribution of points between the cells in a typical gridded permutation can be established using the method presented in Section~\ref{sectDistribution}.
Secondly, for any connected class whose cell graph is either acyclic or unicyclic,
the approach of Section~\ref{sectLTXGridded} using generating functions can be extended to give
the asymptotic number of gridded permutations. For further details, see~\cite[Theorems~4.3 and~4.5]{BevanThesis}.

\begin{figure}[ht]
  \centering
  \gcfive{6}{0,0,0,1,1,-1}{0,0,0,1}{0,0,0,1}{0,0,0,-1}{1,1,-1,1}
  \qquad\quad
  \gcfive{7}{-1,1,-1}{0,1}{0,1,0,-1,1,1,-1}{0,1}{0,-1}
  \qquad\quad
  \gcfive{8}{0,0,0,-1}{-1,1,0,1,-1,0,1}{0,0,0,-1}{0,0,-1,-1,0,1,0,1}{0,0,0,-1}
  \qquad\quad
  \gcfive{7}{1,-1,1}{0,0,-1}{0,0,-1,-1,1,-1}{0,0,0,0,0,1}{0,0,0,0,0,-1,-1}
  \caption{Three two-corner classes, and a four-corner snake, with no adjacent corners}\label{figTwoCornersNotAdjacent}
\end{figure}

Connected two-corner classes with no adjacent corner cells, such as the three shown in Figure~\ref{figTwoCornersNotAdjacent}, don't support any new possibilities for dancing, and are thus directly amenable to the analysis presented above.
If corner orientations are restricted to northeast and southwest, then we have what we call a \emph{snake}, such as the class at the right of Figure~\ref{figTwoCornersNotAdjacent}.
The asymptotic enumeration of snakes with no adjacent corners also requires no additional analysis.

\begin{figure}[ht]
  \centering
  \gctwo{3}{-1,-1}{1,0,-1}
  \qquad\quad
  \gctwo{5}{0,1,1,0,1}{1,1,0,1}
  \qquad\quad
  \gctwo{7}{0,1,0,0,1,-1}{1,-1,-1,1,0,0,1}
  \caption{Semi-skinny classes, with two adjacent corners}\label{figSemiSkinny}
\end{figure}

However, in general, connected two-corner classes with corners in adjacent cells, such as the \emph{semi-skinny} (two row) classes shown in Figure~\ref{figSemiSkinny}, may exhibit \emph{non-corner diagonals}.
Dancing at non-corner diagonals interacts with the dancing at the corners, so additional analysis would be required.
With three or more corners, the general situation quickly gets more complicated.
With four or more corners, the cell graph may have cycles.
See~\cite{BBRUnicyclic} for a detailed investigation of the structural complexity of unicyclic classes.

In general, \emph{disconnected} classes are not directly amenable to our approach, because typical large gridded permutations in such classes may not be constrained.
For example, in \!\gctwo3{0,0,-1}{1,1}\!, the asymptotic expected number of points in the non-blank cell at the upper right is finite.
Indeed, the asymptotic probability of that cell being empty or containing just a single point is positive.
Specifically, in this class there are $2^{n-k}$ gridded $n$-permutations with $k$ points in the upper right cell (for $k=0,\ldots,n$), and therefore $2^{n+1}-1$ gridded $n$-permutations overall.
So the asymptotic probability of there being $k$ points in the upper right cell is $2^{-(k+1)}$,
and the expected number of points in the upper right cell asymptotically equals~1.
Further analysis would be required to handle this sort of situation.

\acknowledgements
We are grateful to an anonymous referee for a thorough review of an earlier version of this paper, resulting in significant improvements to its presentation.

\bibliographystyle{plain}
\bibliography{GridClasses-DMTCS}

\end{document}